\newtheorem*{theorem*}{Theorem}
\newtheorem{lemma}{Lemma}[subsection]
\newtheorem{proposition}[lemma]{Proposition}
\newtheorem{remark}[lemma]{Remark}
\newtheorem{example}[lemma]{Example}
\newtheorem{theorem}[lemma]{Theorem}
\newtheorem{definition}[lemma]{Definition}
\newtheorem{notation}[lemma]{Notation}
\newtheorem{corollary}[lemma]{Corollary}
\newtheorem*{conjecture*}{Conjecture}
\sloppy \theoremstyle{plain}
\newcommand{\tr}{\operatorname{Tr}}
\newcommand{\Hom}{\operatorname{Hom}}
\newcommand{\cc}{\mathbb{C}}
\newcommand{\rr}{\mathbb{R}}
\newcommand{\re}{\operatorname{Re}}
\newcommand{\Z}{{\mathbb Z}}
\newcommand{\R}{{\mathbb R}}
\newcommand{\C}{{\mathbb C}}
\newcommand{\Sc}{{\mathcal S}}
\newcommand{\G}{{\mathcal G}}
\newcommand{\Fre}{{Fr\'{e}chet \,}}
\newcommand{\Fou}{{\mathcal{F}}}
\newcommand{\g}{{\mathfrak{g}}}
\newcommand{\h}{{\mathfrak{h}}}
\newcommand{\Supp}{\mathrm{Supp}}
\newcommand{\gd}{\g^{\sigma}}
\newcommand{\de}{def}
\newcommand{\Lie}{\operatorname{Lie}}
\renewcommand{\sl}{\operatorname{sl}}
\begin{document}

\author{Avraham Aizenbud}
\address{Faculty of Mathematics
and Computer Science, The Weizmann Institute of Science POB 26,
Rehovot 76100, ISRAEL.} \email{aizenr@yahoo.com}

\title[An analog of integrability Theorem]{A partial analog of integrability theorem for distributions on p-adic spaces and applications}
%\date{\today}
\keywords{singular support, waive front set, coistropic subvariety, integrability theorem, invariant distributions, multiplicity one, Gelfand pair, symmetric pair. \\
\indent MSC Classes: 46F10, 20C99, 20G05, 20G25, 22E45}%
%\classification{22E,22E45,20G05,20G25,46F99}
%
%20-xx  Group theory and generalizations 20Cxx Representation
%theory of groups 20C99 None of the above, but in this section
%
%20Gxx Linear algebraic groups (classical groups) 20G05
%Representation theory 20G25 Linear algebraic groups over local
%fields and their integers
%
%22-xx  Topological groups, Lie groups 22Exx Lie groups 22E45
%Representations of Lie and linear algebraic groups over real
%fields: analytic methods {For the purely algebraic theory, see
%20G05}
%
%
%46-xx  Functional analysis 46Fxx Distributions, generalized
%functions, distribution spaces 46F10 Operations with distributions
%
%14-xx  Algebraic geometry 14Lxx Algebraic Groups 14L24 Geometric
%invariant theory [See also 13A50] 14L30 Group actions on varieties
%or schemes (quotients) [See also 13A50, 14L24]
%
%\mscclasses{}
\begin{abstract}
Let $X$ be a smooth real algebraic variety. Let $\xi$ be a
distribution on it. One can define the singular support of $\xi$
to be the singular support of the $D_X$-module generated by $\xi$
(some times it is also called the characteristic variety). A powerful
property of the singular support is that it is a coisotropic
subvariety of $T^*X$. This is the integrability theorem (see
\cite{KKS, Mal, Gab}). This theorem turned out to be
useful in representation theory of real reductive groups (see e.g.
\cite{AG_AMOT, AS, Say}).

The aim of this paper is to give an analog of this theorem to the
non-Archimedean case. The theory of $D$-modules is not available
to us so we need a different definition of the singular support.
We use the notion  wave front set from \cite{Hef} and define the
singular support to be its Zariski closure. Then we prove that the
singular support satisfies some property that we call weakly
coisotropic, which is weaker than being coisotropic but is enough for
some applications. We also prove some other properties of the
singular support that were trivial in the Archimedean case (using
the algebraic definition) but not obvious in the non-Archimedean
case.

We provide two applications of those results:
\begin{itemize}
\item a non-Archimedean analog of the results of \cite{Say} concerning Gelfand property of nice symmetric pairs
\item  a proof of Multiplicity one Theorems for $GL_n$ which is uniform for all local fields.
This theorem was proven for the non-Archimedean case in
\cite{AGRS} and for the Archimedean case in \cite{AG_AMOT} and
\cite{SZ}.
\end{itemize}
\end{abstract}
\maketitle
\setcounter{tocdepth}{3}
\tableofcontents
\section{Introduction}
The theory of invariant distributions is widely used in
representation theory of reductive algebraic groups over local
fields. We can roughly divide this theory into two parts.
\begin{itemize}
\item Archimedean - distributions on smooth manifolds, Nash manifolds, real analytic manifolds, real algebraic manifolds, etc.
\item Non-Archimedean - distributions on l-spaces, p-adic analytic manifolds, p-adic algebraic manifolds, etc.
\end{itemize}
In general the non-Archimedean case of the theory of invariant
distributions is easier than the Archimedean one, but there is
one significant tool that is available only in the Archimedean
case. This tool is the theory of differential operators. One of
the powerful tools coming from the use of differential operators
is the notion of singular support (sometimes it is also called the
characteristic variety). The singular support of a distribution
$\xi$ on a real algebraic manifold $X$ is a subvariety of $T^*X$.
A deep and important property of the singular support is the fact
that it is coisotropic. This fact is the integrability theorem
(see \cite{KKS, Mal, Gab}). This theorem turned out
to be useful in the representation theory of real reductive groups
(see e.g. \cite{AG_AMOT,AS,Say}).

The aim of this paper is to give an analog of this theorem to the
non-Archimedean case. Though we didn't achieve a full analog of
the integrability theorem, we managed to formulate and prove some
partial analog of it. Namely we prove that the singular support
satisfies some property that we call weakly coisotropic, which is
weaker than being coisotropic but enough for some applications. We
also prove some other properties of the singular support that were
trivial in the Archimedean case
but not obvious in the non-Archimedean case.

We provide two applications of those results.

\begin{itemize}
\item We give a non-Archimedean analog of the results of \cite{Say} concerning Gelfand property of nice symmetric pairs.
\item We give a proof of Multiplicity one Theorems for $GL_n$ which is uniform for all local fields. This theorem was proven for the non-Archimedean case in \cite{AGRS} and for the non-Archimedean case in \cite{AG_AMOT} and \cite{SZ}.
\end{itemize}

\subsection{The singular support and the wave front set}$ $\\
The theory of $D$-modules is not available to us so we need a
different definition of singular support. We use the notion of
wave front set from \cite{Hef} and define the singular support to
be its Zariski closure. Unlike the algebraic definition of the
singular support, the definition of the wave front set is analytic
and uses Fourier transform instead of differential operators, this
is what makes it available for the non-Archimedean case.

Surprisingly, the fact that in the non-Archimedean case the
singular support is weakly coisotropic quite easily follows from
the basic properties of the wave front set developed in
\cite{Hef}. However another important property of the the singular
support that was trivial in the Archimedean case is not obvious in
the non-Archimedean case. Namely in presence of a group action one
can exhibit some restriction on the singular support of invariant
distribution. We also provide a non-Archimedean analog of this
property.

In general our results are based on the work \cite{Hef} where the
theory of the wave front set is developed for the non-Archimedean
case.
\subsection{Structure of the paper}$ $\\
In section \ref{prel} we give notations that will be used
throughout the paper and give some preliminaries on distributions,
including some results from \cite{Hef} on the wave front set.

In section \ref{coisotropic} we introduce the notion of coistropic
variety and weakly coistropic variety and discuss some properties
of them.

In section \ref{WFSS} we prove the main results on singular
support and the wave front set. We sum up the properties of
singular support in subsection \ref{SS}. In subsection
\ref{nondist} we apply those properties to get some technical
results that will be useful for proving Gelfand property.

In section \ref{GelSym} we generalize the results of \cite{Say} to
arbitrary local fields of characteristic 0.

In subsection \ref{GelSym_prel} we give the necessary
preliminaries for section \ref{GelSym}. In subsubsection \ref{Gel}
we provide basic preliminaries on Gelfand pairs. In subsubsection
\ref{Tame} we review a technique from \cite{AG_HC} for proving
that a given pair is a Gelfand pair. In subsubsections
\ref{SymPar}-\ref{def} we review a technique from \cite{AG_HC} and
\cite{AG_Reg} for proving that a given symmetric pair is a Gelfand
pair.

In section \ref{MOT} we indicate a proof of Multiplicity one
Theorems for $GL_n$ which is uniform for all local fields of
characteristic 0. This theorem was proven for the non-Archimidian
case in \cite{AGRS} and for the non-Archimidian case in
\cite{AG_AMOT} and \cite{SZ}.

\subsection{Acknowledgements}$ $\\
I wish to thank {\bf Dmitry Gourevitch} and {\bf Eitan Sayag} for fruitful discussions.
Also I cordially thank {\bf Dmitry Gourevitch} for his careful proof reading.
\section{Notations and preliminaries} \label{prel}
\begin{itemize}
\item Throughout the paper $F$ is a local field of characteristic zero.
\item All the algebraic varieties, analytic varieties and algebraic groups that we will consider will be defined over $F$.
\item By a reductive group we mean an algebraic reductive group.
\item Let $E$ be an extension of $F$. Let $G$ be an algebraic group
defined over $F$. We denote by $G_{E/F}$ the canonical algebraic
group defined over $F$ such that $G_{E/F}(F)=G(E)$.
\item By $Sp_{2n}$ we mean the symplectic  group of $2n \times 2n$ matrixes.
\item The word manifold will always mean that the object is smooth (e.g. by algebraic manifold we mean smooth algebraic
variety).
\item For a group $G$ acting on a set $X$ and a point $x \in X$ we denote by $Gx$ or by $G(x)$ the orbit of $x$ and by $G_x$ the stabilizer of $x$. we also denote by $X^G$ the set of $G$ invariant elements and for an element $g \in G$ denote by $X^g$ the set of $g$ invariant elements
\item An action of a Lie algebra $\g$ on a (smooth, algebraic, etc) manifold $M$ is a Lie algebra homomorphism from $\g$ to the Lie algebra of vector fields on $M$.
Note that an action of a (Lie, algebraic, etc) group on $M$ defines an action of its Lie algebra on $M$.
\item For a Lie algebra $\g$ acting on $M$, an element $\alpha \in \g$ and a point $x \in M$ we denote by $\alpha(x) \in T_xM$ t
he value at point $x$ of the vector field corresponding to
$\alpha$. We denote by $\g x \subset T_xM$ or by $\g (x) \subset
T_xM$ the image of the map $\alpha \mapsto \alpha(x)$ and by $\g_x
\subset \g$ its kernel. We denote $M^{\g}:=\{x \in M|\g x=0 \}$
and $M^{\alpha}:=\{x \in M|\alpha(x)=0 \}$, analogously to the
group case.
\item For manifolds  $L \subset M$ we
denote by $N_L^M:=(T_M|_L)/T_L $ the normal bundle to $L$ in $M$.
\item Denote by $CN_L^M:=(N_L^M)^*$ the conormal  bundle.
\item For a point $y \in L$ we denote by $N_{L,y}^M$ the normal space to $L$ in $M$
at the point $y$ and by $CN_{L,y}^M$ the conormal space.
\item Let $M,N$ be (smooth, algebraic, etc) manifolds. Let $E$ be a bundle over $N$. Let $\phi: M \to N$ be a morphism.
We denote by $\phi ^*(E)$ to be the pullback of $E$.
\item Let $M,N$ be (smooth, algebraic, etc) manifolds. Let $S \subset(T^*(N))$. Let $\phi: M \to N$ be a morphism.
We denote $\phi ^*(S):=d(\phi)^*(S \times_N M)$.
\item Let $M,N$ be topological spaces. Let $E$ be a over $N$. Let $\phi: M \to N$ be a morphism.
We denote by $\phi ^*(E)$ to be the pullback of $E$.
\item Let $V$ be a linear space. For a point $x=(v,\phi) \in V \times V^*$ we denote $\widehat{x}=(\phi,-v) \in V^* \times V$, similarly for subset $X \subset V \times V^*$ we define $\widehat{X}$. for a (smooth, algebraic, etc) manifold  and a subset $X  \subset  T^*(M \times V)$ we denote $\widehat{X}_V  \subset  T^*(M \times V^*)$ in a similar
way.
\item Let $B$ be a non-degenerate bilinear form on $V$. This gives an identification between $V$ and $V^*$ and therefore, by the previous notation, maps $F_B: V \times V \to V \times V$ and $F_B: T^*M \times V \times V \to T^*M \times V \times V$. If there is bo ambiguity we will denote it by
$F_V$.

\end{itemize}
\subsection{Distributions}$ $\\
In this paper we will refer to distributions on algebraic
varieties over archimedean and non-archimedean fields. In the
non-archimedean case we mean the notion of distributions on
$l$-spaces from \cite{BZ}, that is linear functionals on the space
of locally constant compactly supported functions.

We will use the following notations.

\begin{notation}
Let $X$ be an $l$-space.
\begin{itemize}
\item Denote by $\Sc(X)$ the space
of Schwartz functions on $X$ (i.e. locally  constant compactly
supported functions) Denote $\Sc^* (X):= \Sc(X)^*$ to be the dual
space to $\Sc(X)$.

\item For any locally constant sheaf $E$ over $X$ we denote by
$\Sc(X,E)$ the space of compactly supported sections of $E$ and by $\Sc^*(X,E)$ its dual space.

\item For any finite dimensional complex vector space $V$ we denote
$\Sc(X,V):=\Sc(X,X \times V)$ and $\Sc^*(X,V):=\Sc^*(X,X\times
V)$, where $X \times V$ is a constant sheaf.

\item Let $Z \subset X$ be a closed
subset. We denote
$$\Sc^*_X(Z):= \{\xi \in \Sc^*(X)|\Supp(\xi)\subset Z\}.$$
For a locally closed subset $Y \subset X$ we denote
$\Sc^*_X(Y):=\Sc^*_{X\setminus (\overline{Y} \setminus Y)}(Y)$. In
the same way, for any locally constant sheaf $E$ on $X$ we define
$\Sc^*_X(Y,E)$.

\item Suppose that $X$ is an analytic variety over a non-Archimedean field $F$. Then we define $D_X$ to be the sheaf of locally constant measures on $X$ (i.e. measures that locally are restriction of Haar measure on $F^n$). We denote $\G(X):=\Sc^*(X,D_X)$ and $\G(X,E):=\Sc^*(X,D_X \otimes
E^*)$.

\item For an analytic map $\phi:X \to Y$ of analytic manifolds over non-Archimedean field we denote by $\phi^*:\G(Y) \to \G(X)$ the pullback, similarly we denote $\phi^*:\G(Y,E) \to \G(X,\phi^*(E))$ for any locally constant sheaf
$E$.
%08 646 1606
\end{itemize}
\end{notation}

In the Archimedean case we will use the theory of Schwartz
functions and distributions as developed in \cite{AG_Sch}. This
theory is developed for Nash manifolds. Nash manifolds are smooth
semi-algebraic manifolds but in the present work only smooth real
algebraic manifolds are considered. Therefore the reader can
safely replace the word {\it Nash} by {\it smooth real algebraic}.

Schwartz functions are functions that decay, together with all
their derivatives, faster than any polynomial. On $\R^n$ it is the
usual notion of Schwartz function. For precise definitions of
those notions we refer the reader to \cite{AG_Sch}. We will use
the following notations.

\begin{notation}
Let $X$ be a Nash manifold.

Denote by $\Sc(X)$ the space of Schwartz functions on $X$. Denote
by $\Sc^* (X):= \Sc(X)^*$ the dual space to $\Sc(X)$. We define
$D_X$ to be the bundle of densities on $X$ for any Nash bundle $E$
on $X$ we define $\Sc^*(X,E),\Sc^*_X(Y),\G(X),\phi^*, etc$
analogously to the non-Archimedean case.
\end{notation}
\subsubsection{Invariant distributions}%$ $\\
\begin{proposition} \label{strat}
Let an $l$-group $G$ act on $l$-space $X$.
Let $Z \subset X$ be a closed subset.

Let $Z =
\bigcup_{i=0}^l Z_i$ be a $G$-invariant stratification of
$Z$. Let $\chi$ be a character of $G$. Suppose that for any $0 \leq i \leq l$ we have $\Sc^*(Z_i)^{G,\chi}=0$. Then
$\Sc^*_X(Z)^{G,\chi}=0$.
\end{proposition}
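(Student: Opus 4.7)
The plan is to proceed by induction on the number of strata, exploiting the fundamental short exact sequence for distributions on an $l$-space. Recall from the theory of \cite{BZ} that for an open subset $U \subset X$ with closed complement $F$, dualizing the extension-by-zero sequence $0 \to \Sc(U) \to \Sc(X) \to \Sc(F) \to 0$ yields
$$0 \to \Sc^*_X(F) \to \Sc^*(X) \to \Sc^*(U) \to 0.$$

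First I would reorder the strata so that $Y_k := \bigcup_{i=0}^k Z_i$ is closed in $X$ for each $k$; this can always be arranged, since the stratification hypothesis forces the closure of each $Z_i$ to be a union of strata, so one can order by dimension (or any refinement thereof). Under this ordering, $Z_k$ is open in $Y_k$ with closed complement $Y_{k-1}$, and every $Y_k$ remains $G$-invariant. Applying the sequence above to $Y_k$ with open part $Z_k$, and using that $Y_{k-1}$ and $Y_k$ are closed in $X$ while $Z_k$ is locally closed, one obtains the exact sequence
$$0 \to \Sc^*_X(Y_{k-1}) \to \Sc^*_X(Y_k) \to \Sc^*(Z_k) \to 0.$$

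All maps here are $G$-equivariant, so applying the left-exact functor of $(G,\chi)$-equivariant vectors yields
$$0 \to \Sc^*_X(Y_{k-1})^{G,\chi} \to \Sc^*_X(Y_k)^{G,\chi} \to \Sc^*(Z_k)^{G,\chi}.$$
The rightmost term vanishes by hypothesis, so the inductive step $\Sc^*_X(Y_{k-1})^{G,\chi}=0 \Longrightarrow \Sc^*_X(Y_k)^{G,\chi}=0$ is immediate. The base case $Y_0=Z_0$ is the given vanishing for $i=0$, and taking $k=l$ produces the conclusion $\Sc^*_X(Z)^{G,\chi}=0$.

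No serious obstacle is expected. The argument is a routine application of the standard open/closed exact sequence for $l$-space distributions combined with a suitable ordering of the stratification; the only minor points to verify are the reordering and the identifications of the various $\Sc^*$-spaces, both of which follow directly from the definitions in the preliminaries.
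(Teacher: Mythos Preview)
Your proof is correct and is precisely the argument the paper has in mind: the paper does not spell out a proof but simply says the proposition ``immediately follows from \cite[section 1.2]{BZ}'', and the open/closed short exact sequence you invoke is exactly the content of that reference. Your induction on the filtration $Y_k$ is the standard unpacking of that citation, so the approaches coincide.
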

This proposition immediately follows from \cite[section 1.2]{BZ}.

\begin{proposition} \label{Arch_strat}
Let a Nash group $G$ act on a Nash manifold $X$.
Let $Z \subset X$ be a closed subset.

Let $Z =
\bigcup_{i=0}^l Z_i$ be a Nash $G$-invariant stratification of
$Z$. Let $\chi$ be a character of $G$. Suppose that for any $k \in
\Z_{\geq 0}$ and $0 \leq i \leq l$ we have $\Sc^*(Z_i,Sym^k(CN_{Z_i}^X))^{G,\chi}=0$. Then
$\Sc^*_X(Z)^{G,\chi}=0$.
\end{proposition}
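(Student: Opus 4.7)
The plan is to mimic the argument behind Proposition \ref{strat}, but to account for the fact that in the Archimedean setting a distribution supported on a submanifold $Y \subset X$ is not determined by its pullback to $Y$: it can involve arbitrary orders of transversal differentiation along $Y$. This is precisely why the symmetric powers $Sym^k(CN_{Z_i}^X)$ enter the hypothesis, and why the non-Archimedean proof does not carry over verbatim.

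I would proceed by induction on the number of strata $l$. For the inductive step, pick a minimal (hence closed) stratum $Z_0 \subset Z$. Then $Z \setminus Z_0$ is a closed subset of the Nash manifold $X \setminus Z_0$ and inherits a Nash $G$-invariant stratification into $l-1$ strata, to which the inductive hypothesis applies. The standard short exact sequence
\begin{equation*}
0 \to \Sc^*_X(Z_0) \to \Sc^*_X(Z) \to \Sc^*_{X \setminus Z_0}(Z \setminus Z_0) \to 0
\end{equation*}
is $G$-equivariant, and taking $(G,\chi)$-invariants is left exact. Hence it suffices to prove vanishing of invariants on the two outer terms. The right-hand term vanishes by the inductive hypothesis, so the problem reduces to the base case: a single closed smooth stratum $Y = Z_0$.

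For the base case I would invoke the standard input from \cite{AG_Sch}: if $Y$ is a closed Nash submanifold of a Nash manifold $X$, there is a natural $G$-equivariant ascending filtration $F^0 \subset F^1 \subset \cdots \subset \Sc^*_X(Y)$ by transversal order, with successive quotients
\begin{equation*}
F^k/F^{k-1} \cong \Sc^*(Y, Sym^k(CN_Y^X)),
\end{equation*}
(suitably twisted by densities so that the formulation matches the proposition) and with $\bigcup_k F^k = \Sc^*_X(Y)$. By hypothesis, each graded piece has vanishing $(G,\chi)$-invariants. Applying left-exactness of invariants inductively to $0 \to F^{k-1} \to F^k \to F^k/F^{k-1} \to 0$ gives $(F^k)^{G,\chi}=0$ for all $k$, and any $\xi \in \Sc^*_X(Y)^{G,\chi}$ must lie in some $F^k$, so it is zero.

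The main obstacle is the last point: verifying that $\Sc^*_X(Y) = \bigcup_k F^k$, i.e.\ that every Schwartz distribution supported on a closed Nash submanifold has globally finite transversal order. This fails for general smooth manifolds and is exactly the place where the Nash/tempered structure is essential; once it is granted, the rest of the argument is a purely formal assembly of short exact sequences.
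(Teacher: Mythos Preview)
The paper does not give its own proof of this proposition: it simply records that it ``immediately follows from \cite[Corollary 7.2.6]{AGS}''. Your argument is correct and is precisely the standard one underlying that cited result---induction on strata via the open/closed short exact sequence, reduction to a single closed Nash submanifold, and then the transversal-order filtration whose graded pieces are $\Sc^*(Y,Sym^k(CN_Y^X))$. The two technical inputs you flag (exactness of the restriction sequence for Schwartz distributions on Nash manifolds, and the fact that a tempered distribution supported on a closed Nash submanifold has globally bounded transversal order) are exactly the results supplied by \cite{AG_Sch}, so there is no gap.
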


This proposition immediately follows from
\cite[Corollary 7.2.6]{AGS}.

\begin{theorem}[Frobenius reciprocity] \label{Frob} %non-arc-
Let an $l$-group (respectively Nash group) $G$ act transitively on an $l$-space (respectively Nash manifold) $Z$. Let
$\varphi:X \to Z$ be a $G$-equivariant map. Let $z\in Z$. Let $X_z$ be the fiber of $z$. Let $\chi$
be a character of $G$. Then $\Sc^*(X)^{G,\chi}$ is canonically
isomorphic to $\Sc^*(X_z)^{G_z,\chi \cdot \Delta_G|_{G_z} \cdot
\Delta_{G_z}^{-1}}$ where $\Delta$ denotes the modular character.
\end{theorem}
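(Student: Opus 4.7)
My plan is first to use the transitivity of the $G$-action on $Z$ to identify $Z \cong G/G_z$, so that $\varphi: X \to Z$ presents $X$ as the associated bundle $G \times^{G_z} X_z$. Concretely, the action map $p: G \times X_z \to X$, $(g,y) \mapsto g \cdot y$, is surjective and factors through the free $G_z$-action $h \cdot (g,y) = (gh^{-1}, h y)$. This identification is available in both the $l$-space and Nash settings using a local section of the quotient map $G \to G/G_z$, which exists in both categories.

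Next, I would construct the isomorphism. Given $\xi \in \Sc^*(X)^{G,\chi}$, pull it back along $p$ to obtain a distribution $p^*\xi$ on $G \times X_z$. Because $p$ is equivariant for left translation on the first factor and $\xi$ is $(G,\chi)$-equivariant, $p^*\xi$ is forced to split canonically as a tensor product of a left Haar measure on $G$ with a distribution $\eta$ on $X_z$; concretely, $\eta$ is obtained as the restriction to a transversal slice $\{e\} \times X_z$. The inverse assignment sends $\eta \in \Sc^*(X_z)^{G_z,\chi'}$ to the distribution (Haar) $\otimes\, \eta$ on $G \times X_z$, which then descends along $p$ to $X$ precisely because of the required $G_z$-equivariance of $\eta$.

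The main technical obstacle, and the source of the nontrivial factor $\Delta_G|_{G_z} \cdot \Delta_{G_z}^{-1}$ on the right-hand side, is the bookkeeping of modular characters. The right-translation action of $h \in G_z$ on a left Haar measure of $G$ scales it by $\Delta_G(h)$, while descending distributions along the $G_z$-quotient $G \times X_z \to X$ introduces the factor $\Delta_{G_z}(h)^{-1}$, since distributions on the quotient correspond to $G_z$-equivariant distributions on the total space with twist $\Delta_{G_z}$. Combining these with the $\chi$-equivariance of $\xi$ yields exactly the claimed character $\chi \cdot \Delta_G|_{G_z} \cdot \Delta_{G_z}^{-1}$ on $\eta$. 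The whole construction proceeds uniformly in both settings, appealing in the $l$-space case to the basic manipulations of \cite{BZ} and in the Nash case to the distribution theory of \cite{AG_Sch}; the essential input in each setting is a Fubini-type statement along the submersion $p$, which is standard in either framework.
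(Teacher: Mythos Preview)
The paper does not actually prove this theorem: immediately after the statement it simply refers the reader to \cite[section 1.5]{Ber} for the $l$-group case and to \cite[Theorem 2.3.8]{AG_HC} for the Nash case. So there is no in-paper argument to compare against.

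Your sketch is the standard proof one finds in those references: realize $X$ as the induced space $G \times^{G_z} X_z$ via a local section of $G \to G/G_z$, identify $(G,\chi)$-equivariant distributions on $X$ with suitably equivariant distributions on $G \times X_z$, and then restrict to the slice $\{e\}\times X_z$; the twist $\Delta_G|_{G_z}\Delta_{G_z}^{-1}$ appears exactly as you say, from comparing the right $G_z$-action on a left Haar measure with the Jacobian of the $G_z$-quotient. One small point worth tightening: the phrase ``$p^*\xi$ splits canonically as a tensor product of a left Haar measure on $G$ with a distribution $\eta$'' is slightly loose, since $(G,\chi)$-equivariance on the first factor gives a $\chi$-twisted Haar measure rather than Haar measure itself; but this is cosmetic and does not affect the argument. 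In the Nash setting you should also make explicit that the local section and the Fubini step require the structure theorems for Nash submersions from \cite{AG_Sch}, which is where \cite{AG_HC} does the work.
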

For a proof see \cite[section 1.5]{Ber}  for the non-Archimedean
case and \cite[Theorem 2.3.8]{AG_HC} for the non-Archimedean case.
\subsubsection{Fourier transform}
$ $\\
From now till the end of the paper we fix an additive character
$\kappa$ of $F$. If $F$ is Archimedean we fix $\kappa$ to be
defined by $\kappa(x):=e^{2\pi i \re(x)}$.

\begin{notation}
Let $V$ be a vector space over $F$. For any distribution $\xi \in
\Sc^*(V)$ we define  $\widehat{\xi} \in \G(V^*)$ to be its Fourier
transform.

For a space X (an $l$-space or a Nash manifold depending on $F$), for any distribution $\xi \in \Sc^*(X \times V)$ we define  $\widehat{\xi}_V \in \G(X \times V^*)$ to be its partial Fourer transform

Let $B$ be a non-degenerate bilinear form on $V$. Then $B$
identifies $\G(V^*)$  with $\Sc^*(V)$. We denote by $\Fou_B:
\Sc^*(V) \to \Sc^*(V)$ and $\Fou_B:\Sc^*(M \times V) \to
\Sc^*(M\times V)$ the corresponding Fourer transforms.

If there is no ambiguity, we will write $\Fou_V$, and sometimes just $\Fou$, instead of $\Fou_B$.
\end{notation}

We will use the following trivial observation.

\begin{lemma}
Let $V$ be a finite dimensional vector space over $F$. Let a
Nash group $G$ act linearly on $V$. Let $B$ be a $G$-invariant
non-degenerate symmetric bilinear form on $V$.
%Let $M$ be a Nash manifold with an action of $G$.
Let $\xi \in \Sc^*(V)$ be a $G$-invariant distribution. Then $\Fou_B(\xi)$ is also
$G$-invariant.
\end{lemma}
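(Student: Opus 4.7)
The plan is to verify that the Fourier transform $\Fou_B$ intertwines the given $G$-action on $\Sc^*(V)$ with itself, once one unwinds its definition through the identification $V\cong V^*$ induced by $B$.

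First, I would record the standard equivariance of the ``abstract'' Fourier transform $\Fou:\Sc^*(V)\to\G(V^*)$. Namely, if $G$ acts linearly on $V$ and hence on $V^*$ via the contragredient representation, then $\Fou(g\cdot\xi)=g\cdot\Fou(\xi)$ for any $g\in G$ and any $\xi\in\Sc^*(V)$. This is a direct change-of-variables computation; note that $G$-invariance of a non-degenerate symmetric bilinear form $B$ forces $|\det g|=1$ on $V$, so the Haar measure used in the definition of $\Fou$ is automatically $G$-invariant.

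Second, the form $B$ provides an isomorphism $\phi_B:V\to V^*$, $v\mapsto B(v,\cdot)$, and the hypothesis $B(gv,gw)=B(v,w)$ says exactly that $\phi_B$ is $G$-equivariant when $V^*$ is endowed with the contragredient action. By the definition of $\Fou_B$, one has $\Fou_B=\phi_B^{*}\circ\Fou$, where $\phi_B^*$ denotes the identification $\G(V^*)\cong\Sc^*(V)$. Composing the two equivariances gives $\Fou_B(g\cdot\xi)=g\cdot\Fou_B(\xi)$, so if $\xi$ is $G$-invariant then so is $\Fou_B(\xi)$.

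There is essentially no obstacle here; this is why the paper labels it a trivial observation. The only point requiring any care is the bookkeeping between the natural $G$-action on $V^*$ (the contragredient) and the original action on $V$, both of which get identified through $\phi_B$. Once that bookkeeping is done cleanly, the statement follows immediately from the standard equivariance of the Fourier transform and the $G$-invariance of $B$.
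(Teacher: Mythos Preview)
The paper does not supply a proof of this lemma at all; it simply calls it a ``trivial observation'' and moves on. Your argument is correct and is precisely the standard verification one would write out if asked: the abstract Fourier transform $\Sc^*(V)\to\G(V^*)$ is equivariant for the contragredient action, the map $\phi_B:V\to V^*$ is $G$-equivariant because $B$ is $G$-invariant, and composing yields the claim. Your remark that $G$-invariance of a non-degenerate $B$ forces $|\det g|=1$ is also correct (from $g^tBg=B$ one gets $\det(g)^2=1$), and it disposes of any Jacobian factor that might otherwise appear. There is nothing to compare here, since the paper's ``proof'' is the empty one.
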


\begin{notation}
Let $V$ be a vector space over $F$. Consider the homothety action
of $F^{\times}$ on $V$ by $\rho(\lambda)v:= \lambda^{-1}v$. It
gives rise to an action $\rho$ of $F^{\times}$ on $\Sc^*(V)$.

Also, for any $\lambda \in F^{\times}$ denote
$|\lambda|:=\frac{dx}{\rho(\lambda)dx}$, where $dx$ denotes the
Haar measure on $F$.
\end{notation}
\begin{notation}
Let $V$ be a vector space over $F$. Let $B$ be a non-degenerate
symmetric bilinear form on $V$. We denote $$Z(B):=\{x \in
V(F)|B(x,x)=0 \}.$$
\end{notation}

\begin{theorem} [Homogeneity Theorem] \label{ArchHom}
Let $V$ be a vector space over $F$. Let $B$ be a non-degenerate
symmetric bilinear form on $V$. Let $M$ be a space(an $l$-space or a Nash manifold depending on $F$). Let $L
\subset \Sc^*_{V(F)\times M}(Z(B)\times M)$ be a non-zero subspace
such that $\forall \xi \in L $ we have $\Fou_B(\xi) \in L$ and $B
\xi \in L$ (here $B$ is interpreted as a quadratic form).

Then there exist a non-zero distribution $\xi \in L$ and a unitary character $u$ of $F^{\times}$ such that either
$\rho(\lambda)\xi = || \lambda ||^{\frac{dimV}{2}} u(\lambda) \xi$ for any $\lambda \in F^{\times }$ or
$\rho(\lambda)\xi = | \lambda |^{\frac{dimV}{2}+1} u(\lambda) \xi$ for any $\lambda \in F^{\times }$.
\end{theorem}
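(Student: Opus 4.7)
The proof combines a Mellin-type spectral decomposition of $L$ under the homothety action $\rho$ of $F^\times$ with the constraints imposed by the Fourier and $B$-multiplication invariances. First, I would reduce to the case that $L$ itself is $\rho(F^\times)$-invariant by replacing $L$ with $L':=\Span\{\rho(\lambda)\xi:\xi\in L,\,\lambda\in F^\times\}$. The commutation relations
\[
\rho(\lambda)\circ\Fou_B \;=\; |\lambda|^{\dim V}\,\Fou_B\circ\rho(\lambda^{-1}),\qquad \rho(\lambda)\circ B \;=\; \lambda^{2}\,B\circ\rho(\lambda),
\]
imply that $L'$ is still stable under $\Fou_B$ and multiplication by $B$, and it is still supported in $Z(B)\times M$ because $Z(B)$ is an $F^\times$-stable cone.

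Next, since $L$ is a non-zero $F^\times$-module, a Mellin-type spectral argument (the analytic Mellin transform in the Archimedean case, the structure theory of smooth Hecke modules of $F^\times$ in the non-Archimedean case) produces a non-zero $\xi\in L$ and a quasi-character $\chi(\lambda)=|\lambda|^{s}u(\lambda)$, with $u$ unitary and $s\in\C$, satisfying $\rho(\lambda)\xi=\chi(\lambda)\xi$. Using the commutation relations, $\Fou_B\xi$ is again a $\rho$-eigenvector with character $|\lambda|^{\dim V-s}u(\lambda)^{-1}$, and $B\xi$ (when non-zero) has character $|\lambda|^{s+2}(\lambda^{2}/|\lambda|^{2})\,u(\lambda)$. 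Thus $\Fou_B$ and $B$ act on the $s$-coordinate of the eigenspectrum by the reflection $s\mapsto\dim V-s$ and the shift $s\mapsto s+2$; together these generate an infinite dihedral action on $\R$, whose reflection-fixed points modulo the shift are precisely $s=\dim V/2$ and $s=\dim V/2+1$. Iterating $\Fou_B$ and $B$ while possible, I would descend along the dihedral orbit until the descent terminates (i.e.\ until further application of $B$ yields zero), at which point the terminal eigenvector's $s$-value lands on one of these two fixed points.

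The main obstacle is the terminal-value analysis: one must show that the descent really terminates at $s\in\{\dim V/2,\dim V/2+1\}$, equivalently, that any $\rho$-eigenvector in $L$ has $s-\dim V/2\in\Z$. This integrality reflects the rigidity of distributions supported on the quadric $Z(B)$: in the Archimedean case it follows from the $b$-function of $B$ (as used in \cite{AG_AMOT}), and in the non-Archimedean case from the Igusa-zeta-function and wave-front-set calculus of \cite{Hef}, combined with a parity analysis distinguishing the two exceptional values.
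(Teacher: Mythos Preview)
The paper does not supply a proof of this statement; it cites \cite[Theorem~5.1.7]{AG_HC} and moves on. So there is no in-paper argument to compare your plan against.

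That said, your outline captures the correct skeleton of the argument in \cite{AG_HC}: enlarge $L$ to be homothety-stable, exploit the commutation of $\rho$ with $\Fou_B$ and with multiplication by $B$, and pin down the homogeneity exponent via the interaction of the reflection $s\mapsto\dim V-s$ and the shift $s\mapsto s+2$. Where your sketch falls short is precisely in the two places you yourself flag. First, the existence of a $\rho$-eigenvector in an arbitrary subspace of $\Sc^*$ is not free: $L$ carries no a~priori admissibility or local finiteness as an $F^\times$-module (in the Archimedean case $\rho$ is not locally finite on $\Sc^*(V)$), so a bare ``Mellin-type'' appeal is not enough. Second, and more seriously, the terminal-value analysis is where the real content lies and your plan does not close it: knowing only that some iterate $B^k\xi$ vanishes does not by itself force the exponent of $B^{k-1}\xi$ to equal $\dim V/2$ or $\dim V/2+1$, nor is that conclusion equivalent to the integrality $s-\dim V/2\in\Z$ as you assert. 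Your proposed non-Archimedean tool (the wave-front calculus of \cite{Hef}) is not what controls these exponents.

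The argument in \cite{AG_HC} handles both issues at once by observing that multiplication by $B$ and its Fourier-conjugate $\Fou_B^{-1}\circ B\circ\Fou_B$ act locally nilpotently on $L$ (the first because every element of $L$ is supported in $Z(B)\times M$, the second because the same is true of their $\Fou_B$-transforms); together with the Euler operator they generate a locally finite $\mathfrak{sl}_2$-action via the Weil representation. One then lands in a finite-dimensional $\mathfrak{sl}_2$-module, which simultaneously produces the eigenvector and forces its weight to be an integer. The two exponents $\dim V/2$ and $\dim V/2+1$ emerge as the possible central weights of such a module, the dichotomy reflecting the parity of the highest weight.
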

For a proof see \cite[Theorem 5.1.7]{AG_HC}.

\subsubsection{The wave front set}$ $\\
In this subsubsection $F$ is a non-Archimedean field. We will use
the notion of the wave front set of a distribution on analytic
space from \cite{Hef}. First we will remind it for a distribution
on an open subset of $F^n$.

\begin{definition}
Let $U \subset F^n$ be an open subset and $\xi \in \Sc^*(U)$ be a
distribution. We say that $\xi$ is smooth at $(x_0, v_0) \in T^*U$
if there are open neighborhoods $A$ of $x_0$ and $B$ of $v_0$ such
that for any $\phi \in \Sc(A)$ there is an $N_\phi > 0$ for which
for any $\lambda \in F$ satisfying $\lambda > N_\phi$ we have
$\widehat{(\phi \xi)}|_{\lambda B} = 0$. The complement in $T^*U$
of the set of smooth pairs $(x_0, v_0)$ of $\xi$ is called the
wave front set of $\xi$ and denoted by $WF(\xi)$.
\end{definition}

\begin{remark}
This notion appears in \cite{Hef} with two differences.

1) The notion in \cite{Hef} is more general and depends on some
subgroup $\Lambda \subset F$, in our case $\Lambda = F$.

2) The notion in  \cite{Hef} defines the wave front set of $\xi$ to
be a subset in $T^*U -U \times 0$. In our notation this subset will be
$WF(\xi) -U \times 0$.
\end{remark}

The following lemmas are trivial

\begin{lemma}
Let $U \subset F^n$ be an open subset and $\xi \in \Sc^*(U)$ be a
distribution. Then $WF(\xi)$ is closed, invariant with respect to the
homothety $(x,v) \mapsto (x,\lambda v)$ and
$$p_U(WF(\xi))=WF(\xi) \cap (U \times 0) =\Supp(\xi).$$
\end{lemma}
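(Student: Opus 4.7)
The plan is to verify each of the three assertions directly from the definition of a smooth pair of $\xi$; as the paper signals, no serious work is involved.

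For closedness, observe that the definition of smoothness at $(x_0, v_0)$ is witnessed by \emph{open} neighborhoods $A \ni x_0$ and $B \ni v_0$ together with a function $\phi \mapsto N_\phi$ that depends only on $A$. The same pair $(A,B)$ then witnesses smoothness at every point of $A \times B$, so the set of smooth pairs is open in $T^*U$ and hence $WF(\xi)$ is closed.

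For homothety invariance, fix $\lambda \in F^{\times}$. If $(x_0, v_0)$ is smooth with witnesses $A$, $B$ and bounds $N_\phi$, then $(x_0, \lambda v_0)$ is smooth with witnesses $A$ and $\lambda B$: for any $\phi \in \Sc(A)$ and any $\mu \in F$ with $|\mu| > N_\phi/|\lambda|$ we have $|\mu\lambda| > N_\phi$, whence $\widehat{(\phi\xi)}|_{\mu\lambda B} = 0$, i.e.\ $\widehat{(\phi\xi)}|_{\mu(\lambda B)} = 0$.

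For the final chain of equalities, the inclusion $WF(\xi) \cap (U \times 0) \subset p_U(WF(\xi))$ is tautological. If $x_0 \notin \Supp(\xi)$, choose an open $A \ni x_0$ with $\xi|_A = 0$; then every $\phi \in \Sc(A)$ satisfies $\phi\xi = 0$, so $(x_0, v_0)$ is smooth for every $v_0$ (with arbitrary $B$ and $N_\phi = 0$), giving $p_U(WF(\xi)) \subset \Supp(\xi)$. Conversely, if $x_0 \in \Supp(\xi)$, I claim $(x_0, 0) \in WF(\xi)$. Suppose not: then there exist open $A \ni x_0$, $B \ni 0$ and bounds $N_\phi$ as in the definition. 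The one point that needs care is that $B$, being an open neighborhood of $0$, satisfies $\bigcup_{|\lambda| \to \infty} \lambda B = F^n$; hence $\widehat{(\phi\xi)}|_{\lambda B} = 0$ for all sufficiently large $|\lambda|$ forces $\widehat{(\phi\xi)} = 0$, so $\phi\xi = 0$. Since $\phi \in \Sc(A)$ was arbitrary, $\xi$ vanishes on $A$, contradicting $x_0 \in \Supp(\xi)$. This closes the loop $\Supp(\xi) \subset WF(\xi) \cap (U \times 0) \subset p_U(WF(\xi)) \subset \Supp(\xi)$.
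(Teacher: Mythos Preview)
Your proof is correct and is exactly the direct verification from the definition that the paper has in mind; the paper itself offers no argument beyond declaring the lemma trivial, and your write-up simply fills in those details.
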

\begin{lemma}
Let $V \subset U \subset F^n$ be open subsets and  $\xi \in
\Sc^*(U)$ then $WF(\xi|_V) = WF(\xi) \cap p_U^{-1}(V).$
\end{lemma}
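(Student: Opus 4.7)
The plan is to observe that smoothness of a distribution at a covector $(x_0,v_0)$ is a local condition at the base point $x_0$, so as soon as $x_0\in V$ it cannot distinguish between $\xi$ and $\xi|_V$. The whole proof is an unpacking of the definition, which is why the author labels it trivial; I will just make the bookkeeping explicit.

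First I would note that both sides live naturally inside $p_U^{-1}(V)\subset T^*U$: the right-hand side by construction, and the left-hand side because $V\subset U$ is open so $T^*V$ is canonically identified with $p_U^{-1}(V)$. Hence it suffices to show that for every $(x_0,v_0)\in T^*U$ with $x_0\in V$, the distribution $\xi$ is smooth at $(x_0,v_0)$ in the sense of the definition if and only if $\xi|_V$ is.

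The key elementary observation is that if $\phi\in\Sc(U)$ is supported in $V$ and we denote by $\phi_V\in\Sc(V)$ its restriction (equivalently, the extension by zero of $\phi_V$ is $\phi$), then $\phi\cdot\xi=\phi_V\cdot(\xi|_V)$ as distributions on $F^n$ (both are supported in $V$ and agree when tested against any Schwartz function). In particular their Fourier transforms coincide.

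Given this, both inclusions are formal. For $\subset$: if $\xi|_V$ is smooth at $(x_0,v_0)$ with witnesses $A\subset V$ and $B$, then for any $\phi\in\Sc(A)$, viewing $\phi\in\Sc(U)$ by extension by zero, the identity above gives $\widehat{\phi\xi}|_{\lambda B}=\widehat{\phi(\xi|_V)}|_{\lambda B}=0$ for $|\lambda|>N_\phi$, so $\xi$ is smooth at $(x_0,v_0)$. For $\supset$: if $\xi$ is smooth at $(x_0,v_0)$ with witnesses $A,B$, shrink $A$ to $A\cap V$, still an open neighborhood of $x_0$ since $x_0\in V$; then any $\phi\in\Sc(A\cap V)$ extends by zero to a Schwartz function on $U$ supported in $V$, and the same identity shows $\xi|_V$ is smooth there. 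The only step that even merits attention is checking that extension by zero from $\Sc(A\cap V)$ into $\Sc(U)$ is legitimate, which holds because $A\cap V$ is open in $U$ and locally constant compactly supported functions extend trivially in the $l$-space setting. No obstacle is expected; the whole content is the locality of the Fourier-type condition defining $WF$.
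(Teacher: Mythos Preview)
Your argument is correct and is exactly the definition-unpacking the paper has in mind when it declares the lemma trivial (the paper gives no proof). One small presentation slip: your labels ``$\subset$'' and ``$\supset$'' are swapped --- showing ``$\xi|_V$ smooth $\Rightarrow$ $\xi$ smooth'' at a point of $p_U^{-1}(V)$ is the contrapositive of $WF(\xi)\cap p_U^{-1}(V)\subset WF(\xi|_V)$, not the other inclusion --- but the mathematical content of each direction is right.
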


\begin{lemma}
Let $U \subset F^n$ be an open subset, $\xi_1,\xi_2 \in \Sc^*(X)$
be distributions and $f_1,f_2$ be locally constant functions on
$X$. Then $WF(f_1 \xi_1+f_2 \xi_2) \subset WF(\xi_1) \cup
WF(\xi_2)$.
\end{lemma}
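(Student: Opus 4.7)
The plan is to unpack the definition of smoothness at a point and use linearity of the Fourier transform together with the observation that multiplication by a locally constant function preserves the Schwartz space. So I would fix a point $(x_0,v_0)\notin WF(\xi_1)\cup WF(\xi_2)$ and aim to show that $f_1\xi_1+f_2\xi_2$ is smooth at $(x_0,v_0)$ in the sense of the definition given above.

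Concretely, applying the definition of smoothness to each $\xi_i$ yields open neighborhoods $A_i$ of $x_0$ and $B_i$ of $v_0$ realizing the smoothness property for $\xi_i$. I would then set $A:=A_1\cap A_2$ and $B:=B_1\cap B_2$ and argue that these work for $f_1\xi_1+f_2\xi_2$. Given any $\phi\in\Sc(A)$, the key point is that $\phi f_i$ is still locally constant and compactly supported (locally constant functions form an algebra and multiplication by $f_i$ does not enlarge the support), hence $\phi f_i\in\Sc(A)\subset\Sc(A_i)$. Smoothness of $\xi_i$ then provides constants $N_{\phi f_i}$ such that $\widehat{(\phi f_i\,\xi_i)}|_{\lambda B_i}=0$ whenever $|\lambda|>N_{\phi f_i}$. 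Taking $N_\phi:=\max(N_{\phi f_1},N_{\phi f_2})$ and using that $\lambda B\subset \lambda B_i$ together with the linearity of the Fourier transform gives
$$\widehat{\bigl(\phi(f_1\xi_1+f_2\xi_2)\bigr)}\Big|_{\lambda B}=\widehat{(\phi f_1\xi_1)}\Big|_{\lambda B}+\widehat{(\phi f_2\xi_2)}\Big|_{\lambda B}=0$$
for all $|\lambda|>N_\phi$, which is precisely what is required.

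There is no real obstacle here; the only point one must not skip is verifying that $\phi f_i\in\Sc(A_i)$, i.e.\ that locally constant functions act on the Schwartz space by multiplication. Everything else is formal manipulation of the definition, and the same argument also shows that the wave front set is sub-additive and that multiplication by a locally constant function cannot enlarge it, which is how the stated inclusion follows.
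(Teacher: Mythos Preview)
Your proposal is correct. The paper does not actually supply a proof of this lemma: it is one of the statements introduced by the phrase ``The following lemmas are trivial'', and no argument is given. Your unwinding of the definition of smoothness at a point---intersecting the neighborhoods, using that $\phi f_i\in\Sc(A_i)$ because locally constant compactly supported functions are closed under multiplication by locally constant functions, and invoking linearity of the Fourier transform---is exactly the routine verification the paper has in mind.
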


\begin{corollary}
For any locally constant sheaf $E$ on $U$ we can define the wave
front set of any element in $\Sc^*(U,E)$ and $\G(U,E)$.
\end{corollary}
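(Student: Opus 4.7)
The plan is to reduce the definition on a locally constant sheaf to the scalar case by choosing local trivializations, then to check that the resulting subset of $T^*U$ is independent of choices and glues globally. Let me lay out the steps.

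First, since $E$ is locally constant, cover $U$ by open subsets $\{U_i\}$ on which $E$ trivializes, fixing isomorphisms $E|_{U_i} \cong U_i \times V_i$ for finite-dimensional complex vector spaces $V_i$. Given $\xi \in \Sc^*(U,E)$ and a basis of $V_i$, the restriction $\xi|_{U_i}$ corresponds to a finite tuple of scalar distributions $\xi_i^{(1)},\dots,\xi_i^{(d_i)} \in \Sc^*(U_i)$, each of which has a wave front set by the preceding definition. I would then \emph{define}
\[
WF(\xi) \cap p_U^{-1}(U_i) := \bigcup_j WF(\xi_i^{(j)}) \subset T^*U_i.
\]

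The main thing to verify is that this is independent of the choice of basis and of trivializing cover, and for this both the sum lemma and the restriction lemma do all the work. If a second basis of $V_i$ yields components $\widetilde\xi_i^{(k)}$, then each $\widetilde\xi_i^{(k)}$ is a (constant, hence locally constant) linear combination of the $\xi_i^{(j)}$, so by the sum lemma $WF(\widetilde\xi_i^{(k)}) \subset \bigcup_j WF(\xi_i^{(j)})$, and by symmetry the two unions agree. The same argument applied to the transition functions between two trivializations $E|_{U_i} \cong U_i \times V_i$ and $E|_{U_j} \cong U_j \times V_j$ (which on $U_i \cap U_j$ are locally constant $GL$-valued) shows that the two local prescriptions agree on $p_U^{-1}(U_i \cap U_j)$, while the restriction lemma guarantees that shrinking $U_i$ to a smaller open subset does not change the union. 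Hence the local subsets glue to a well-defined closed subset $WF(\xi) \subset T^*U$.

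The extension to $\G(U,E) = \Sc^*(U, D_U \otimes E^*)$ is immediate: $D_U$ is by definition a locally constant sheaf of complex lines on the analytic manifold $U$, so $D_U \otimes E^*$ is itself a locally constant sheaf of finite-dimensional vector spaces, and the construction above applies verbatim. The only potential obstacle is the well-definedness check on overlaps, but as explained it reduces at once to the sum and restriction lemmas stated just above, so no real difficulty arises.
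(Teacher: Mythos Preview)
Your proposal is correct and follows exactly the argument the paper has in mind: the corollary is stated without proof immediately after the sum lemma (with locally constant coefficients) and the restriction lemma, and your write-up simply spells out the obvious local-trivialization argument these lemmas are meant to support. There is nothing to add.
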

We will use the following theorem from \cite{Hef}, see Theorem
2.8.

\begin{theorem} \label{submrtion}
Let $U \subset F^m$ and $V \subset F^n$ be open subsets, and
suppose that $f: U \to V$ is an analytic submersion. Then for any
$\xi \in \G(V)$ we have $WF(f^*(\xi)) \subset f^*(WF(\xi))$.
\end{theorem}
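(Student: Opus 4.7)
The plan is to reduce to the case of a standard projection and then carry out an explicit partial Fourier computation. Since analytic submersions admit, near any point of $U$, local coordinates in which $f$ becomes the projection $\pi:V' \times W \to V'$ (with $V' \subset V$ and $W \subset F^{m-n}$ open), and since both $WF$ and $f^*$ are of local nature on $U$, it suffices to prove $WF(\pi^*\xi) \subset \pi^*(WF(\xi))$ for such a projection. In these coordinates, $\pi^*(WF(\xi)) = \{((v,w),(v^*,0)) : (v,v^*) \in WF(\xi)\}$.

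For $\phi \in \Sc(V' \times W)$ one computes directly
\[
\widehat{\phi\,\pi^*\xi}(v^*, w^*) = \langle \xi(v),\,\kappa(v \cdot v^*)\,\widehat{\phi}_W(v, w^*) \rangle,
\]
where $\widehat{\phi}_W$ is the partial Fourier transform in the $W$-variable. I then fix $(x_0, \xi_0) \notin \pi^*(WF(\xi))$, write $x_0 = (v_0, w_0)$ and $\xi_0 = (v_0^*, w_0^*)$, and split into two cases. If $w_0^* \ne 0$, I pick a product neighborhood $B = B' \times C$ of $\xi_0$ with $0 \notin C$; for $(v^*, w^*) \in \lambda B$ with $|\lambda|$ large, $|w^*|$ exceeds the (necessarily compact) $w^*$-support of $\widehat{\phi}_W(v,\cdot)$, so the integrand vanishes identically.

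If $w_0^* = 0$, then $(v_0, v_0^*) \notin WF(\xi)$ by hypothesis. Exploiting the factorization $\Sc(V' \times W) = \Sc(V') \otimes \Sc(W)$ for $l$-spaces, I write $\phi = \sum_i \psi_i(v)\chi_i(w)$ as a finite sum of pure tensors and obtain
\[
\widehat{\phi\,\pi^*\xi}(v^*, w^*) = \sum_i \widehat{\chi_i}(w^*)\,\widehat{\psi_i\,\xi}(v^*).
\]
For $|w^*|$ large each $\widehat{\chi_i}(w^*)$ vanishes, while for $|w^*|$ bounded the hypothesis on $\xi$ at $(v_0, v_0^*)$ yields thresholds $N_{\psi_i}$ so that $\widehat{\psi_i \xi}|_{\lambda B'} = 0$ for $|\lambda| > N_{\psi_i}$. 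Taking $N_\phi := \max_i N_{\psi_i}$ handles all terms simultaneously.

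The main technical point is precisely this uniformity in $w^*$: the argument depends on controlling the whole family $\{\widehat{\phi}_W(v,w^*)\}_{w^* \text{ bounded}}$ with a single threshold. The finite tensor decomposition of compactly supported locally constant functions reduces this to finitely many instances of the hypothesis on $WF(\xi)$, and is precisely the feature that makes the non-Archimedean argument considerably more elementary than its Archimedean counterpart (where one would need seminorm estimates on $\widehat{\phi}_W$ in the $w^*$-parameter).
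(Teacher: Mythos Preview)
The paper does not prove this result; it is quoted as Theorem~2.8 of \cite{Hef}. Your direct computation for the projection $\pi:V'\times W\to V'$ is correct and clean (in Case~2 the split according to $|w^*|$ is actually superfluous: once $\widehat{\psi_i\xi}|_{\lambda B'}=0$ for $|\lambda|>N_{\psi_i}$, each summand already vanishes on all of $\lambda B$, irrespective of $w^*$).

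The genuine gap is in the reduction step. Straightening $f$ locally to a projection means writing $f|_{U_0}=\pi\circ\alpha$ with $\alpha:U_0\to V'\times W$ an analytic isomorphism, and to transport the projection-case estimate back to $U_0$ you then need $WF(\alpha^*\eta)\subset\alpha^*WF(\eta)$. But that inclusion is exactly the case $m=n$ of the very theorem you are proving; in the paper it appears as Corollary~\ref{iso} and is \emph{deduced from} Theorem~\ref{submrtion}, not available beforehand. Your phrase ``$WF$ is of local nature on $U$'' justifies restriction to open subsets, but it does not give invariance under nonlinear analytic coordinate changes. So as written you have established the projection case and reduced the general statement to (projection case) $+$ (isomorphism case), leaving the latter open. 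That isomorphism case --- the behavior of the $p$-adic wave front set under an analytic change of variables --- is precisely where Heifetz's oscillatory-integral analysis enters, and it is the substantive half of the theorem; the projection case, which the tensor decomposition of $\Sc(V'\times W)$ makes almost immediate, is the easy half.
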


\begin{corollary} \label{iso}
Let $V, U \subset F^n$ be open subsets and $f: V \to U$ be an
analytic isomorphism. Then for any $\xi \in \G(V)$ we have
$WF(f^*(\xi)) = f^*(WF(\xi))$.
\end{corollary}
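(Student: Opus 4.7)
The plan is to deduce both inclusions from Theorem \ref{submrtion}, using the fact that an analytic isomorphism is in particular an analytic submersion, and that its inverse is likewise an analytic isomorphism. One inclusion is essentially immediate, and the other is obtained by running the first argument in reverse for the inverse map.

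For the inclusion $WF(f^*(\xi)) \subset f^*(WF(\xi))$ I would invoke Theorem \ref{submrtion} directly with the submersion $f$. For the opposite inclusion, I set $g := f^{-1}: U \to V$, which is also an analytic submersion, and apply Theorem \ref{submrtion} to $g$ with the distribution $\eta := f^*(\xi)$. This yields $WF(g^*(\eta)) \subset g^*(WF(\eta))$. Since $g^*(f^*(\xi)) = (f \circ g)^*(\xi) = \xi$, the inclusion rewrites as $WF(\xi) \subset g^*(WF(f^*(\xi)))$. Pulling back along $f$ and using the identity $f^* \circ g^* = (g \circ f)^* = \mathrm{id}$ for the set-level cotangent pullback then yields $f^*(WF(\xi)) \subset WF(f^*(\xi))$, which combined with the first inclusion gives equality.

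The one point requiring a small verification, and where I expect the only conceptual hiccup (if any) to lie, is the functoriality of the set-level pullback $\phi^*(S) := d(\phi)^*(S \times_N M)$ introduced in the Notations section, namely that $\phi_1^* \circ \phi_2^* = (\phi_2 \circ \phi_1)^*$ for composable $\phi_1, \phi_2$. This is a routine consequence of the chain rule $d(\phi_2 \circ \phi_1) = d\phi_2 \circ d\phi_1$, and it implies that when $f$ is a diffeomorphism the operations $f^*$ and $(f^{-1})^*$ are mutually inverse bijections between subsets of $T^*V$ and subsets of $T^*U$, which is exactly what the argument above needs.
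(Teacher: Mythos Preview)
Your proof is correct and is precisely the intended argument: the paper states this as an immediate corollary of Theorem~\ref{submrtion} without giving details, and the details are exactly the ones you supply---apply the theorem to $f$ for one inclusion and to $f^{-1}$ for the other, using the chain-rule functoriality of the cotangent pullback. (Note that the statement has a harmless typo: $\xi$ should lie in $\G(U)$, not $\G(V)$, which your argument implicitly corrects.)
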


\begin{corollary}
Let $X$ be an analytic manifold, $E$ be a locally constant sheaf
on $X$. We can define the the wave front set of any element in
$\Sc^*(X,E)$ and $\G(X,E)$. Moreover, Theorem \ref{submrtion}
holds for submersions between analytic manifolds.
\end{corollary}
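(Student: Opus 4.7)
My plan is to globalize the local definition of the wave front set by a standard sheaf-theoretic gluing argument, and then deduce the submersion statement from the local case on charts.

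First, I would work out the case of the trivial sheaf. Pick an analytic atlas $\{U_i\}$ on $X$ together with analytic isomorphisms $\phi_i : U_i \to V_i$ with $V_i \subset F^n$ open. Given $\xi \in \G(X)$, each pullback $(\phi_i^{-1})^*(\xi|_{U_i})$ lies in $\G(V_i)$, so its wave front set is already defined. I would then define $WF(\xi) \subset T^*X$ by
\[
WF(\xi) \cap T^*U_i := (d\phi_i)^*\bigl(WF((\phi_i^{-1})^*(\xi|_{U_i}))\bigr).
\]
To see this is well defined, I would check consistency on overlaps $U_i \cap U_j$. The transition map $\phi_j \circ \phi_i^{-1}$ between open subsets of $F^n$ is an analytic isomorphism, so Corollary \ref{iso} shows that the wave front set transforms correctly under it, while the restriction lemma guarantees compatibility when one shrinks the chart. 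Hence the local pieces agree on overlaps and glue to a well-defined closed subset of $T^*X$.

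Next I would handle a locally constant sheaf $E$. Since $E$ is locally constant, after refining the atlas I may assume each $U_i$ trivializes $E$, i.e.\ $E|_{U_i} \cong U_i \times W_i$ for some finite-dimensional $\cc$-vector space $W_i$. Picking a basis of $W_i$ expresses $\xi|_{U_i}$ as a finite sum of scalar distributions, each of which has a wave front set by the previous step; by the lemma on sums, their union (or equivalently the union over basis components) is independent of the choice of basis since a change of basis only amounts to multiplication by locally constant functions. Taking $WF(\xi) \cap T^*U_i$ to be this union and noting that on overlaps the gluing again follows from Corollary \ref{iso} plus the sum and restriction lemmas, one obtains a global $WF(\xi) \subset T^*X$ for any $\xi \in \Sc^*(X,E)$ or $\G(X,E)$.

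Finally, for the submersion statement: let $f : M \to N$ be an analytic submersion of analytic manifolds and $\xi \in \G(N)$. Around any point $p \in M$, by the analytic implicit function theorem I can choose analytic charts $U \ni p$ in $M$ and $V \ni f(p)$ in $N$, isomorphic to open subsets of $F^m$ and $F^n$ respectively, in which $f$ becomes a standard coordinate projection (hence a submersion of open subsets of $F^m$ and $F^n$). Applying Theorem \ref{submrtion} to this local model, and translating via the definitions of $WF$ through the charts by means of Corollary \ref{iso}, yields $WF(f^*(\xi)) \subset f^*(WF(\xi))$ on $T^*U$; since $p$ was arbitrary, the inclusion holds globally. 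The extension to coefficients in a locally constant sheaf is obtained in the same way by working basis-component-wise.

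The only real point requiring care — and the step I would expect to occupy most of the verification — is the well-definedness on overlaps: one must check that the two ways of computing $WF$ on $U_i \cap U_j$ give the same subset of $T^*(U_i \cap U_j)$, which is precisely where Corollary \ref{iso} and the restriction lemma do the work. Everything else is bookkeeping.
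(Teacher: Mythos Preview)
Your proposal is correct and is precisely the standard gluing argument that the paper leaves implicit: the corollary is stated without proof in the paper, as an immediate consequence of Corollary~\ref{iso} (for well-definedness on overlaps), the restriction and sum lemmas (for compatibility and for handling locally constant coefficients), and Theorem~\ref{submrtion} in charts (for the submersion statement). There is nothing to add.
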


\section{Coisotropic varieties} \label{coisotropic}
\setcounter{lemma}{0}
\begin{definition}
Let $M$ be a smooth algebraic variety and $\omega$ be a symplectic
form on it.
Let $Z\subset M$ be an algebraic subvariety. We call it {\bf $M$-coisotropic} if one of the following equivalent conditions holds.\\
(i) The ideal sheaf of regular functions that vanish on $\overline{Z}$ is closed under Poisson bracket. \\
(ii) At every smooth point $z \in Z$ we have  $T_zZ \supset (T_zZ)^{\bot}$. Here, $(T_zZ)^{\bot}$ denotes the orthogonal complement
to $T_zZ$ in $T_zM$ with respect to $\omega$. \\
(iii) For a generic smooth point $z \in Z$ we have $T_zZ \supset
(T_zZ)^{\bot}$.

If there is no ambiguity, we will call $Z$ a coisotropic variety.
\end{definition}
Note that every non-empty $M$-coisotropic variety is of dimension
at least $\frac{1}{2}\dim M$.

\begin{notation}
For a smooth algebraic variety $X$ we always consider the standard
symplectic form on $T^*X$. Also, we denote by $p_X:T^*X \to X$ the
standard projection.
\end{notation}

\begin{definition}
Let $(V,\omega)$ be a symplectic vector space with a fixed Lagrangian subspace $L \subset V$. Let $p: V \to V/L$ be the standard projection. Let $Z \subset V$ be a linear subspace. We call it $V$-weakly coisotropic with respect to $L$ if one of the following equivalent conditions holds.\\
(i) $p(Z) \supset p(Z^ \bot)$. Here, $Z^{\bot}$ denotes the orthogonal complement with respect to $\omega.$\\
(ii) $p(Z)^\bot \subset Z \cap L$. Here, $p(Z)^\bot$ denotes the orthogonal complement in $L$ under the identification $L \cong (V/L)^*$.
\end{definition}

\begin{definition}
Let $X$ be a smooth algebraic variety. Let $Z\subset T^*X$ be an algebraic subvariety. We call it {\bf $T^*X$-weakly coisotropic} if one of the following equivalent conditions holds.\\
(i)  At every smooth point $z \in Z$ the space $T_z(Z)$ is $T_z(T^*(X))$ -weakly coisotropic with respect to $Ker(d p_X).$\\
(ii)For a generic smooth point $z \in Z$ the space $T_z(Z)$ is $T_z(T^*(X))$ -weakly coisotropic with respect to $Ker(d p_X).$\\
(iii) For a generic smooth point  $x \in Z$ and for a generic smooth point  $y \in p_X^{-1}(x) \cap Z$ we have
$CN_{p_X(Z),x}^X \subset T_y(p_X^{-1}(x) \cap Z).$\\
(iv) For any smooth point  $x \in p_X(Z)$ the fiber $p_X^{-1}(x) \cap Z$ is locally invariant with respect to shifts by $CN_{p_X(Z),x}^X$ i.e. for any point $y \in p_X^{-1}(x)$ the intersection $(y+CN_{p_X(Z),x}^X) \cap (p_X^{-1}(x)\cap Z)$ is Zariski open in $y+CN_{p_X(Z)}$.

If there is no ambiguity, we will call $Z$ a weakly coisotropic variety.
\end{definition}
Note that every non-empty $T^*X$-weakly coisotropic variety is of dimension
at least $\dim X$.

The following lemma is straightforward.
\begin{lemma}
Any $T^*X$-coisotropic variety is $T^*X$-weakly coisotropic.
\end{lemma}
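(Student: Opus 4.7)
The plan is to reduce to the pointwise linear algebra statement. Concretely, after unwinding definitions, what needs to be shown is: if $(V,\omega)$ is a symplectic vector space with Lagrangian $L$ and projection $p:V\to V/L$, and $Z\subset V$ is a linear subspace with $Z^\bot \subset Z$, then $Z$ is weakly coisotropic with respect to $L$, i.e.\ $p(Z^\bot) \subset p(Z)$. Once this is established, the global statement is immediate: pick a generic smooth point $z\in Z$ (which exists since $Z$ is an algebraic subvariety); the coisotropic hypothesis gives $(T_zZ)^\bot \subset T_zZ$ inside the symplectic space $T_z(T^*X)$, which has the canonical Lagrangian $\ker(dp_X)$; the linear statement applied to $V = T_z(T^*X)$, $L = \ker(dp_X)_z$, $Z = T_zZ$ yields condition (ii) in the definition of weakly coisotropic (at a generic smooth point), which is exactly the required property.

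The linear statement itself is essentially trivial: applying the linear map $p$ to the inclusion $Z^\bot \subset Z$ gives $p(Z^\bot) \subset p(Z)$, which is condition (i) for weak coisotropicity. No further work is needed, since the equivalence of conditions (i) and (ii) in the linear definition is already stated in the paper.

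Thus there is essentially no main obstacle: the proof amounts to the observation that the coisotropic condition $Z^\bot \subset Z$ survives projection, together with a routine passage from a generic smooth point to the whole subvariety by invoking definition (ii) of $T^*X$-weakly coisotropic. The only point requiring minor care is to verify that the fiber $\ker(dp_X)_z \subset T_z(T^*X)$ is indeed a Lagrangian subspace for the canonical symplectic form on $T^*X$, which is a standard fact.
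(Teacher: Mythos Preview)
Your proposal is correct and is precisely the straightforward verification the paper has in mind; the paper does not give a proof, merely stating ``The following lemma is straightforward.'' Your reduction to the pointwise linear-algebra fact $Z^{\bot}\subset Z \Rightarrow p(Z^{\bot})\subset p(Z)$, together with the observation that $\ker(dp_X)$ is Lagrangian, is exactly the intended argument.
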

\begin{proposition}
Let $X$ be a smooth algebraic variety with a symplectic form on
it. Let $R \subset T^*X$ be an algebraic subvariety. Then there
exists a maximal $T^*X$-weakly coisotropic subvariety of $R$ i.e. a
$T^*X$-weakly coisotropic subvariety $T \subset M$ that includes all
$T^*X$-weakly coisotropic subvarieties of $R$.
\end{proposition}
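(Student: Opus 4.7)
The plan is to show that (a) finite unions of $T^*X$-weakly coisotropic subvarieties are weakly coisotropic, and (b) the poset $P$ of $T^*X$-weakly coisotropic subvarieties of $R$, ordered by inclusion, satisfies the ascending chain condition, and then combine these. Granted (a) and (b), any maximal element $T \in P$ (which exists by (b), with $P$ nonempty since $\emptyset \in P$) is in fact a maximum: for any $Z \in P$, the union $T \cup Z$ lies in $P$ by (a) and contains $T$, so maximality forces $T \cup Z = T$, i.e.\ $Z \subseteq T$.

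Fact (b) is immediate from the Noetherianness of the underlying topological space of the algebraic variety $R$, so that every ascending chain of subvarieties stabilizes.

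For fact (a), it suffices to handle the union of two varieties $Z_1, Z_2$. Let $Y$ be any irreducible component of $Z_1 \cup Z_2$. From $Y = (Y \cap Z_1) \cup (Y \cap Z_2)$ and the irreducibility of $Y$, I would conclude $Y \subseteq Z_i$ for some $i$, and then maximality of $Y$ in $Z_1 \cup Z_2$ forces $Y$ to be an irreducible component of $Z_i$; say $i = 1$. To verify condition (ii) of weakly coisotropic for $Z_1 \cup Z_2$ at generic smooth points of $Y$, pick $z$ in the open dense subset of $Y$ consisting of points that lie in no other irreducible component of $Z_1 \cup Z_2$; there $Z_1 \cup Z_2$ coincides locally with $Y$, so $z$ is a smooth point of $Z_1$ and $T_z(Z_1 \cup Z_2) = T_z Z_1$. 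The hypothesis on $Z_1$, in the form of the ``every smooth point'' condition (i), then gives the required weak coisotropy of $T_z(Z_1 \cup Z_2)$ inside $T_z(T^*(X))$.

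The main subtle point is this last step: the reduction uses the stated equivalence of (i) and (ii) in the definition of weakly coisotropic in order to export the weak coisotropy known ``at every smooth point of $Z_1$'' to a smooth point of $Z_1 \cup Z_2$ that is only a priori known to be a generic smooth point of one of its irreducible components. Everything else — the Zorn/ACC step and the uniqueness of the maximum given closure under finite unions — is purely formal.
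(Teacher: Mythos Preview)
Your step (b) is incorrect: Noetherianness of the Zariski topology on $R$ is the \emph{descending} chain condition on closed subsets (equivalently, ACC on open subsets), not the ascending chain condition on closed subsets. In fact the poset $P$ typically fails ACC. For instance, take $X = \mathbb{A}^1$ and $R = T^*X \cong \mathbb{A}^2$; then $Z_n := \bigcup_{i=1}^n p_X^{-1}(i)$ is a finite union of cotangent fibers, hence weakly coisotropic (condition (iv) is immediate for each fiber), and $Z_1 \subsetneq Z_2 \subsetneq \cdots$ never stabilizes. So one cannot produce a maximal element of $P$ by an ACC/Zorn argument, and the reduction ``maximal $\Rightarrow$ maximum via (a)'' never gets off the ground.

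Your step (a), on the other hand, is correct and captures the essential local reason the result holds. The paper avoids the ACC issue by a direct construction: it sets $T$ equal to the Zariski closure of the union $T'$ of all \emph{smooth} weakly coisotropic subvarieties of $R$. Since the smooth locus $Z_{\mathrm{sm}}$ of any weakly coisotropic $Z$ is dense in $Z$ and is itself smooth and weakly coisotropic (by condition (i)), every such $Z$ lies in $T$; one then checks directly that $T$ is weakly coisotropic. The verification of this last point is in the same spirit as your argument for (a), but carried out at the level of the finitely many irreducible components of $T$ rather than via closure under finite unions.
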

\begin{proof}
Let $T'$ be the union of all smooth $T^*X$-weakly coisotropic
subvarieties of $R$. Let $T$ be the Zariski closure of $T'$ in
$R$. It is easy to see that $T$ is the maximal $T^*X$-weakly
coisotropic subvariety of $R$.
\end{proof}
The following lemma is trivial.
\begin{lemma}
Let $X$ be a smooth algebraic variety. Let a group $G$ act on $X$ this induces an action on $T^*X$. Let
$S \subset T^* X$ be a $G$-invariant subvariety. Then the maximal $T^*X$-weakly coisotropic subvariety of $S$ is also $G$-invariant.
\end{lemma}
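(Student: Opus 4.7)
The plan is the standard argument that a maximal object defined by a structural property is automatically preserved by any symmetry of that structure. First I would observe that the $G$-action on $X$ lifts canonically to an action on $T^*X$ (by pushforward on tangent vectors and transpose-inverse on cotangents); this lifted action preserves both the canonical symplectic form $\omega$ on $T^*X$ and the projection $p_X: T^*X \to X$, and in particular the subbundle $\ker(dp_X)$. Thus each $g \in G$ acts on $T^*X$ by an automorphism of the pair $(T^*X, \ker dp_X)$ as a symplectic manifold equipped with a Lagrangian subbundle.

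Next, since all four equivalent conditions in the definition of a $T^*X$-weakly coisotropic subvariety are formulated purely in terms of this structure (the symplectic form and the fibration $p_X$), the image $g \cdot W$ of any smooth weakly coisotropic subvariety $W \subset T^*X$ is again smooth and weakly coisotropic. In particular, if $W \subseteq S$ then $g \cdot W \subseteq g \cdot S = S$ is a smooth weakly coisotropic subvariety of $S$.

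Now let $T_{\max}$ be the maximal weakly coisotropic subvariety of $S$, constructed in the previous proposition as the Zariski closure of the union $T'$ of all smooth weakly coisotropic subvarieties of $S$. By the preceding step, $g \cdot T'$ is again a union of smooth weakly coisotropic subvarieties of $S$, so $g \cdot T' \subseteq T'$; taking Zariski closures (which commute with the action of $g$) gives $g \cdot T_{\max} \subseteq T_{\max}$. Applying the same inclusion with $g^{-1}$ yields the reverse containment, hence $g \cdot T_{\max} = T_{\max}$ for every $g \in G$, proving $G$-invariance.

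There is no substantive obstacle — the only content is the verification that the induced $G$-action on $T^*X$ really does preserve both the symplectic form and $\ker(dp_X)$, which is just the statement that the cotangent lift of a diffeomorphism is symplectic and fiber-preserving. Everything else is a formal consequence of maximality.
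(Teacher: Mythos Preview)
Your argument is correct and is precisely the standard ``maximality is preserved by symmetries'' reasoning one expects; the paper itself gives no proof, simply declaring the lemma trivial. Your elaboration (that the cotangent lift preserves both $\omega$ and $\ker dp_X$, hence preserves the weakly coisotropic condition, and then maximality forces $G$-invariance) is exactly the content behind that word.
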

\begin{notation}
Let $Y$ be a smooth algebraic variety. Let $Z \subset Y$ be a
smooth subvariety and $R \subset T^*Y$  be any subvariety. We
define {\bf the restriction $R|_Z \subset T^*Z$ of $R$ to $Z$} by
$R|_Z:=i^*(R)$, where $i:Z \to Y$ is the embedding.
\end{notation}
\begin{lemma} \label{Restriction}
Let $Y$ be a smooth algebraic variety. Let $Z \subset Y$ be a
smooth subvariety and $R \subset T^*Y$  be a weakly coisotropic
subvariety.
Assume that any smooth point $z \in Z \cap p_Y(R)$ is also a smooth point of $p_Y(R)$ and we have $T_z(Z \cap p_Y(R)) = T_z(Z) \cap T_z (p_Y(R))$.

Then $R|_Z$ is $T^*Z$-weakly coisotropic.
\end{lemma}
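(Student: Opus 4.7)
The plan is to verify characterization (iv) of \(T^*Z\)-weak coisotropy for \(R|_Z\). Write \(W := p_Y(R)\) and \(W' := p_Z(R|_Z)\). Unwinding the definition \(R|_Z = d(i)^*(R \times_Y Z)\) presents \(R|_Z\) as the Zariski closure of the image of \(R \times_Y Z\) under the fiberwise covector restriction \(r\colon T^*Y|_Z \to T^*Z\), and in particular identifies \(W' = Z \cap W\).

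Now fix a smooth point \(x \in W'\). By the hypothesis, \(x\) is smooth in \(W\) and \(T_xW' = T_xZ \cap T_xW\). The first substantive step is the linear-algebraic observation that the fiberwise restriction \(r_x\colon T_x^*Y \to T_x^*Z\) sends \(CN_{W,x}^Y\) surjectively onto \(CN_{W',x}^Z\): containment is immediate, and surjectivity follows by extending a covector on \(T_xZ\) that vanishes on \(T_xZ \cap T_xW\) first to \(T_xZ + T_xW\) by declaring it zero on \(T_xW\) (well-defined by the vanishing on the intersection), then arbitrarily to all of \(T_xY\).

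Next I would transfer the conormal-coset invariance from \(R\) to \(R|_Z\). Condition (iv) applied to \(R\) at the smooth point \(x \in W\) says that the Zariski-closed fiber \(F_x := p_Y^{-1}(x) \cap R\) is a union of \(CN_{W,x}^Y\)-cosets, because inside the irreducible affine space \(y + CN_{W,x}^Y\) the closed subset \((y + CN_{W,x}^Y) \cap F_x\) is also required to be open and must therefore be either empty or the whole coset. The surjection on conormals from the previous paragraph then forces \(r_x(F_x)\) to be a union of \(CN_{W',x}^Z\)-cosets, and since the additive group \(CN_{W',x}^Z\) acts algebraically on \(T_x^*Z\) by translation, its invariant subsets have invariant Zariski closures. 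Hence the fiber \(p_Z^{-1}(x) \cap R|_Z\) — the Zariski closure of \(r_x(F_x)\) — is a union of \(CN_{W',x}^Z\)-cosets, which is exactly condition (iv) for \(R|_Z\) at \(x\). Since this argument applies at every smooth \(x \in W'\), \(R|_Z\) is weakly coisotropic.

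The main obstacle is the bookkeeping around the Zariski closure in the definition of \(R|_Z\): the coset-invariance proven for the set-theoretic image \(r_x(F_x)\) must be argued to survive the closure operation, which is what makes the algebraicity of the translation action essential. Everything else reduces to the linear-algebraic surjection \(CN_{W,x}^Y \twoheadrightarrow CN_{W',x}^Z\) (enabled by the hypothesis \(T_xW' = T_xZ \cap T_xW\)) together with the rephrasing of condition (iv) as exact coset-invariance for closed fibers, both of which are routine.
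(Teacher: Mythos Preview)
Your argument is correct and follows essentially the same route as the paper's proof: both verify characterization~(iv) fiber by fiber, using that the restriction map $q = r_x \colon T_x^*Y \to T_x^*Z$ carries the fiber $p_Y^{-1}(x)\cap R$ onto $p_Z^{-1}(x)\cap R|_Z$ and the conormal $CN_{W,x}^Y$ onto $CN_{W',x}^Z$. Your write-up simply fills in the details the paper leaves implicit---in particular the explicit use of the hypothesis $T_xW' = T_xZ \cap T_xW$ to get the conormal surjection, and the reformulation of (iv) for a closed fiber as exact coset-invariance.

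One small correction: in the paper's conventions, $R|_Z := d(i)^*(R\times_Y Z)$ is the \emph{set-theoretic image}, not its Zariski closure. Since $d(i)^*$ is a bundle map over $Z$, the fiber $p_Z^{-1}(x)\cap R|_Z$ is literally $r_x(F_x)$, so your closure discussion is unnecessary. This also sidesteps the one delicate point in your write-up (global closure need not commute with passing to a fiber); with the paper's definition the issue simply doesn't arise.
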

\begin{proof}
Let $x \in Z$, let $M:= p_Y^{-1}(x) \cap R \subset p_Y^{-1}(x)$ and $L:=CN_{p_Y(R),x}^Y \subset p_Y^{-1}(x).$ We know that $M$ is locally invariant with respect to shifts in $L$. Let $M':= p_Z^{-1}(x) \cap R|_Z \subset p_Z^{-1}(x)$ and $L':=CN_{p_Z(R|_Z),x}^Y \subset p_Z^{-1}(x).$ We want to show that $M'$ is locally invariant with respect to shifts in $L'$. Let $q: p_Y^{-1}(x) \to p_Z^{-1}(x)$ be the standard projection. Note that $M'=q(M)$ and $L'=q(L)$. Now clearly $M'$ is locally invariant with respect to shifts in $L'$.
\end{proof}

\begin{corollary} \label{PreGeoFrob}
Let $Y$ be a smooth algebraic variety.  Let an algebraic group $H$
act on $Y$. Let $q:Y \to B$ be an $H$-equivariant morphism. Let $O
\subset B$ be an orbit. Consider the natural action of $G$ on
$T^*Y$ and let $R \subset T^*Y$ be an $H$-invariant subvariety.
Suppose that $p_Y(R) \subset q^{-1}(O)$. Let $x \in O$. Denote
$Y_x:= q^{-1}(x)$. Then

\itemize{ \item if $R$ is $T^*Y$-weakly coisotropic then $R|_{Y_x}$ is $T^*(Y_x)$-weakly coisotropic.}
\end{corollary}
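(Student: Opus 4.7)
The plan is to derive this corollary directly from Lemma \ref{Restriction} by taking $Z = Y_x$, and so I need to verify the two hypotheses of that lemma at every smooth point $z \in Y_x \cap p_Y(R)$: namely, that $z$ is also a smooth point of $p_Y(R)$, and that $T_z(Y_x \cap p_Y(R)) = T_z(Y_x) \cap T_z(p_Y(R))$. Set $T := p_Y(R)$ and $T_x := T \cap Y_x$. Both hypotheses should follow from the fact that $T \subset q^{-1}(O)$ is $H$-invariant and that $H$ acts transitively on $O$; morally $T$ is a ``homogeneous bundle'' over $O$ with fiber $T_x$, so smoothness and tangent-space behavior are dictated entirely by $T_x$.

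First I would encode the homogeneous-bundle structure via the action map $\phi: H \times Y_x \to q^{-1}(O)$, $(h,y) \mapsto hy$. Using transitivity of $H$ on $O$, this is a smooth surjection whose fibers are cosets of $H_x$ (an $H_x$-torsor). Since $T$ is $H$-invariant and $T \cap Y_x = T_x$, one has $\phi^{-1}(T) = H \times T_x$. The smoothness of $\phi$ then lets me compare smoothness of $T$ at $z$ with smoothness of $H \times T_x$ at $(e,z)$, which in turn (as $H$ is smooth) is equivalent to smoothness of $T_x$ at $z$. This yields the first hypothesis of Lemma \ref{Restriction}.

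Next, for the tangent space identity, I would exploit the same orbit structure at the infinitesimal level. At a smooth point $z$ of $T_x$, hence of $T$, the tangent space $T_zT$ contains $\mathfrak{h}\cdot z$, which surjects onto $T_xO$ under $dq_z$. Therefore $dq_z\colon T_zT \to T_xO$ is surjective and
\[
    \dim \ker(dq_z|_{T_zT}) = \dim T - \dim O = \dim T_x.
\]
Since $T_zY_x = \ker(dq_z\colon T_zY \to T_xB)$ and $T_xO \subset T_xB$, we have $T_zT \cap T_zY_x = \ker(dq_z|_{T_zT})$. The inclusion $T_zT_x \subset T_zT \cap T_zY_x$ is automatic, and a dimension count using the displayed equation above forces equality. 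This is precisely the second hypothesis $T_z(Y_x \cap T) = T_zY_x \cap T_zT$.

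Having verified both hypotheses, Lemma \ref{Restriction} applies and yields that $R|_{Y_x}$ is $T^*(Y_x)$-weakly coisotropic. The main obstacle I anticipate is keeping the smoothness assumptions honest: the argument above relies on $T_x$ being smooth at $z$ to guarantee that the fibration $q|_T\colon T \to O$ is sufficiently well-behaved at $z$ (so that the fiber's tangent space really equals $\ker(dq_z|_{T_zT})$). This is exactly why Lemma \ref{Restriction} was formulated with the hypothesis on smooth points of the intersection, and my proof of smoothness transfer via $\phi$ is designed to match that hypothesis precisely.
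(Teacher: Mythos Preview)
Your approach is exactly what the paper intends: the statement is labelled a corollary of Lemma~\ref{Restriction} and is given no separate proof, so the content is precisely to verify that the transversality hypotheses of that lemma hold for $Z = Y_x$, which you do via the $H$-equivariant structure. Your argument using the action map $\phi: H \times Y_x \to q^{-1}(O)$ and the identification $\phi^{-1}(T) = H \times T_x$ is the natural way to make this precise.

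One small remark: in your tangent-space step you invoke the equality $T_zY_x = \ker(dq_z)$, which tacitly assumes $q$ is a submersion at $z$ (equivalently, that $Y_x$ is smooth of the expected dimension). The paper is silent on this point too, and in the applications (e.g., the projection $\sl(V)\times V\times V^* \to \sl(V)$ used for the Key Proposition) it is automatic. In fact your dimension count goes through with only the inclusion $T_zY_x \subset \ker(dq_z)$, which is always true: from $T_zT_x \subset T_zT \cap T_zY_x \subset \ker(dq_z|_{T_zT})$ and $\dim T_zT_x = \dim T_x = \dim \ker(dq_z|_{T_zT})$ you already get the desired equality, so you can drop the stronger assertion without cost.
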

\begin{corollary} \label{GeoFrob}
In the notation of the previous corollary, if $R|_{Y_x}$ has no
(non-empty) $T^*(Y_x)$-weakly coisotropic subvarieties then $R$ has no
(non-empty) $T^*(Y)$-weakly coisotropic subvarieties.
\end{corollary}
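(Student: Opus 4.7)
The plan is to argue by contradiction using the maximal weakly coisotropic subvariety together with $H$-equivariance, and then reduce to Corollary \ref{PreGeoFrob}.

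Suppose, contrary to the conclusion, that $R$ contains a non-empty $T^*Y$-weakly coisotropic subvariety. By the proposition on existence of a maximal $T^*X$-weakly coisotropic subvariety, there is a maximal $T^*Y$-weakly coisotropic subvariety $T \subset R$, which is then non-empty. Since $R$ is $H$-invariant, the lemma asserting $H$-invariance of the maximal weakly coisotropic subvariety shows that $T$ is $H$-invariant as well. Note that $p_Y(T) \subset p_Y(R) \subset q^{-1}(O)$.

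Next I would use the orbit condition to translate some point of $T$ into the fiber $Y_x$. Pick any $(y,\xi) \in T$; then $q(y) \in O = Hx$, so choose $h \in H$ with $h \cdot q(y) = x$, i.e.\ $q(h \cdot y) = x$, so $h \cdot y \in Y_x$. Since $T$ is $H$-invariant, $h \cdot (y,\xi) \in T$, and its image under $p_Y$ lies in $Y_x$. Hence $T \cap p_Y^{-1}(Y_x)$ is non-empty, and consequently $T|_{Y_x} = d(i)^{*}(T \cap p_Y^{-1}(Y_x)) \subset T^*(Y_x)$ is non-empty (where $i \colon Y_x \hookrightarrow Y$).

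Now I would invoke Corollary \ref{PreGeoFrob} applied to the $H$-invariant weakly coisotropic subvariety $T$ (which satisfies $p_Y(T) \subset q^{-1}(O)$): this yields that $T|_{Y_x}$ is $T^*(Y_x)$-weakly coisotropic. Since $T \subset R$ we have $T|_{Y_x} \subset R|_{Y_x}$, producing a non-empty $T^*(Y_x)$-weakly coisotropic subvariety of $R|_{Y_x}$, which contradicts the hypothesis. The only real content beyond invoking the previous corollary is the translation step, and the main thing to double-check is precisely that non-emptiness is preserved under restriction to $Y_x$; this is exactly what the $H$-invariance of $T$ combined with transitivity of $H$ on $O$ guarantees.
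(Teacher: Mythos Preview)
Your argument is correct and is precisely the natural way to flesh out what the paper leaves implicit: the paper states this corollary without proof, regarding it as immediate from Corollary~\ref{PreGeoFrob} together with the existence and $H$-invariance of the maximal weakly coisotropic subvariety. Your use of the $H$-action on $O$ to ensure that $T|_{Y_x}$ is non-empty is exactly the point that needs checking, and you handle it correctly.
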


\begin{remark}
The  results  on weakly coistropic varieties  that we presented
here have versions for coistropic varieties, see \cite[section 5.1]{AG_AMOT}.
\end{remark}

\section{Properties of singular support and the wave front set} \label{WFSS}
\subsection{The wave front set}$ $\\
In this subsection $F$ is a non-Archimedean field.
\begin{theorem}
Let $Y \subset X$ be algebraic varieties, let $y \in Y(F)$ and
suppose that $X$ is smooth and $Y$ is smooth at $y$. Let $\xi \in
\Sc^*(X(F),E)$ and suppose that $\Supp(\xi) \subset Y(F).$ Then
$WF(\xi) \cap p_X^{-1}(y)(F)$ is invariant with respect to shifts
by $CN_{Y,y}^X(F).$
\end{theorem}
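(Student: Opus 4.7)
The plan is as follows. First, since the wave front set is defined locally and is preserved by analytic isomorphisms (Corollary \ref{iso}), and since any locally constant sheaf $E$ is trivial on a small enough neighborhood of $y$, I will reduce to the case where $X$ is an open neighborhood of $0$ in $F^n$, $y = 0$, $E$ is the trivial sheaf, and $Y = F^k \times \{0\}^{n-k}$ is a coordinate subspace. This straightening uses only that $Y$ is smooth at $y$ inside the smooth variety $X$, so that an analytic chart with the required form exists. Under this identification $T^*_{0}X \cong F^n$ via the standard pairing, and $CN_{Y,y}^X(F) \cong \{0\}^k \times F^{n-k}$.

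Next, because $\Supp(\xi) \subset Y$ and $Y$ is closed in a neighborhood of $y$, in the $l$-space setting $\xi$ corresponds to a distribution $\zeta$ on $Y = F^k$ under the identification $\Sc^*_X(Y) = \Sc^*(Y)$ (which follows from the exact sequence for $\Sc$ associated to a closed subset). Thus $\xi(f) = \zeta(f|_Y)$ for every $f \in \Sc(X)$. The key computation is that for any $\phi \in \Sc(X)$ and any $(v', v'') \in F^k \times F^{n-k}$,
\[
\widehat{\phi\xi}(v', v'') \;=\; \zeta\bigl(x' \mapsto \phi(x', 0)\,\kappa(\langle x', v'\rangle)\bigr),
\]
which is \emph{independent of $v''$}. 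Equivalently, $\widehat{\phi\xi}$ factors through the projection $\pi \colon F^n \to F^k$ onto the first $k$ coordinates.

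Finally, suppose $(y, v_0) \notin WF(\xi)$ and pick open neighborhoods $A \ni y$ and $B \ni v_0$ as in the definition of smoothness. For any $w \in CN_{Y,y}^X(F)$, set $B' := B + w$, which is an open neighborhood of $v_0 + w$. Since $w$, and hence $\lambda w$, has vanishing first $k$ coordinates, $\pi(\lambda B') = \pi(\lambda B)$ for every $\lambda \in F$; combined with the factorization of $\widehat{\phi\xi}$ through $\pi$, this yields $\widehat{\phi\xi}|_{\lambda B} = 0$ if and only if $\widehat{\phi\xi}|_{\lambda B'} = 0$. Therefore the same $A$ works with $B'$ in place of $B$, giving $(y, v_0 + w) \notin WF(\xi)$, which is the contrapositive of the desired invariance.

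The main (minor) obstacle is justifying the displayed formula. Since $\phi\xi$ is compactly supported, its Fourier transform is the locally constant function $v \mapsto (\phi\xi)\bigl(\kappa(\langle \cdot, v\rangle)\bigr)$; and because $\phi\xi$ is supported on $Y$, this pairing depends only on the restriction $\phi(x', 0)\,\kappa(\langle x', v'\rangle)$ of the character-weighted test function to $Y$, producing precisely the formula above. Everything else is bookkeeping in coordinates.
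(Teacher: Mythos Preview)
Your proof is correct and follows essentially the same approach as the paper. The paper reduces (via the implicit function theorem lemma and Corollary \ref{iso}) to the case $X=F^n$, $Y=F^k$, and then invokes the fact that a distribution supported on $F^k$ has Fourier transform invariant under shifts by $(F^k)^{\perp}$; your argument simply spells out this last fact explicitly via the formula $\widehat{\phi\xi}(v',v'')=\zeta\bigl(x'\mapsto \phi(x',0)\kappa(\langle x',v'\rangle)\bigr)$ and then runs the definition of $WF$.
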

This theorem immediately follows from the following one
\begin{theorem} \label{W_Gab_ann}
Let $Y \subset X$ be analytic manifolds and let $y \in Y$. Let $\xi
\in \Sc_X^*(Y)$ and suppose that $\Supp(\xi) \subset Y.$ Then
$WF(\xi) \cap p_X^{-1}(y)$ is invariant with respect to shifts by
$CN_{Y,y}^X$.
\end{theorem}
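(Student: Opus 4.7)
The plan is to reduce to a local-coordinate computation that exploits the absence of transverse derivatives of $\delta_0$ in the non-Archimedean setting. Choose analytic local coordinates around $y$ in which $X$ becomes an open neighborhood of the origin in $F^n = F^k\times F^{n-k}$, the submanifold $Y$ becomes $U_1\times\{0\}$ for some open $U_1\ni 0$ in $F^k$, and $y$ corresponds to the origin; then $T_y^*X = F^k\times F^{n-k}$ and $CN_{Y,y}^X$ identifies with $\{0\}\times F^{n-k}$. By Corollary~\ref{iso} the wave front set transforms naturally under analytic isomorphisms, so it suffices to prove that $WF(\xi)\cap p_X^{-1}(y)$ is invariant under translation in the $v_2$-direction.

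The first main step is a structural lemma: on any compact-open box $K = K_1\times K_2$ around the origin, every $\xi\in\Sc^*(K)$ with $\Supp(\xi)\subset K_1\times\{0\}$ is of the form $\xi = \eta\otimes\delta_0$ for a uniquely determined $\eta\in\Sc^*(K_1)$. This rests on two facts: (a) $\Sc(K) = \Sc(K_1)\otimes\Sc(K_2)$ as algebraic tensor products, since every locally constant function on a compact-open box is a finite sum of indicator functions of product clopens; (b) for any $\phi_2\in\Sc(K_2)$ there is a compact-open $V\ni 0$ in $K_2$ on which $\phi_2\equiv\phi_2(0)$, so $\phi_2 - \phi_2(0)\chi_V$ vanishes on an open set containing $Y\cap K$, and therefore $\xi(\phi_1\otimes\phi_2) = \phi_2(0)\,\xi(\phi_1\otimes\chi_V)$, with the right-hand side independent of the choice of $V$. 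Setting $\eta(\phi_1) := \xi(\phi_1\otimes\chi_V)$ produces the desired factorization. This is the non-Archimedean analogue of the statement that distributions supported on a submanifold are local combinations of derivatives of $\delta$: in our setting only $\delta_0$ itself appears.

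Given the lemma, the Fourier computation is immediate. For $\phi = \phi_1\otimes\phi_2 \in \Sc(A_1\times A_2)$ we have $\phi\cdot\xi = \phi_2(0)(\phi_1\eta)\otimes\delta_0$, whence
\[
\widehat{\phi\cdot\xi}(v_1,v_2) \;=\; \phi_2(0)\,\widehat{\phi_1\eta}(v_1)\,\widehat{\delta_0}(v_2),
\]
and $\widehat{\delta_0}$ is a nonzero constant (Haar) generalized function on $F^{n-k}$. Hence for neighborhoods $B_1\ni v_1$ and $B_2\ni v_2$, the vanishing of $\widehat{\phi\cdot\xi}$ on $\lambda B_1\times\lambda B_2$ (for $|\lambda|$ sufficiently large) is independent of $B_2$ and reduces --- upon taking $\phi_2 = \chi_{A_2}$ so that $\phi_2(0) = 1$ --- to $\widehat{\phi_1\eta}|_{\lambda B_1} = 0$. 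A general $\phi\in\Sc(A)$ is a finite sum of such pure tensors, yielding a corresponding $\tilde\phi\in\Sc(A_1)$ with $\phi\cdot\xi = \tilde\phi\eta\otimes\delta_0$, so the same reduction applies. Quantifying over all $\phi_1$ gives $(y,v_1,v_2)\notin WF(\xi)$ iff $(y_1,v_1)\notin WF(\eta)$, a condition that does not involve $v_2$. Consequently the fiber $WF(\xi)\cap p_X^{-1}(y)$ is of the form $S\times F^{n-k}$ for some $S\subset F^k$, hence invariant under translation by $\{0\}\times F^{n-k} = CN_{Y,y}^X$.

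The main obstacle is the structural lemma, specifically verifying that the factorization $\xi = \eta\otimes\delta_0$ is well defined and exhausts \emph{all} distributions with support in $Y\cap K$ (rather than merely a dense subspace), and checking that the non-pure-tensor case does not introduce extra $v_2$-dependence. Once this purely l-space input is in hand, the Fourier computation and the translation invariance follow formally; this is the point at which the proof diverges most sharply from its Archimedean counterpart, where transverse derivatives of $\delta_0$ would introduce polynomial factors in $v_2$ and the invariance would require a more delicate argument.
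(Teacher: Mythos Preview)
Your proof is correct and follows essentially the same route as the paper: reduce via analytic local coordinates (using Corollary~\ref{iso}) to the model case $X=F^n$, $Y=F^k$, and then use that a distribution supported on $F^k$ has Fourier transform independent of the transverse variables. The paper compresses this last step into the single sentence ``if a distribution on $F^n$ is supported on $F^k$ then its Fourier transform is invariant with respect to shifts by the orthogonal complement to $F^k$''; your structural lemma $\xi=\eta\otimes\delta_0$ is exactly the content of that sentence, made explicit.
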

In order to prove this theorem we will need the following standard
lemma which is a version of the implicit function theorem.
\begin{lemma}
Let $Y \subset X$ be analytic manifolds. Let $n:=\dim(X)$ and
$k:=\dim(Y)$. Let $y \in Y.$ Then there exist a open neighborhood
$y \in U \subset X$ and an analytic isomophism $\phi : U \to W$,
where $W$ is open subset of $F^n$  such that $\phi(Y \cap U) = W
\cap F^k$, where $F^k \subset F^n$ is a coordinate subspace.
\end{lemma}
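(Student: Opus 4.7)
The plan is to reduce to a direct application of the analytic inverse function theorem, after choosing auxiliary functions that complete a defining system for $Y$ to a local coordinate system on $X$.

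First I would pick an arbitrary analytic chart $(V_0,\psi_0)$ of $X$ around $y$, so that $\psi_0:V_0 \to V_0' \subset F^n$ is an analytic isomorphism with $\psi_0(y)=0$. Since $Y$ is an analytic submanifold of $X$ of dimension $k$, the local definition of analytic submanifold gives, after possibly shrinking $V_0$, that $\psi_0(Y\cap V_0)$ is the common zero locus in $V_0'$ of $n-k$ analytic functions $f_1,\dots,f_{n-k}$ whose differentials at $0$ are linearly independent. (If one prefers to treat this as something to verify rather than as a definition, one can take a local analytic submersion $X \to F^{n-k}$ cutting out $Y$ near $y$, which exists because $Y$ is smooth of codimension $n-k$, and let the $f_j$ be its components.)

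Next I would complete $\{df_j(0)\}_{j=1}^{n-k}$ to a basis of $(F^n)^*$ by choosing $k$ of the standard coordinate functionals on $F^n$, say $x_{i_1},\dots,x_{i_k}$. Define
$$\Phi : V_0' \longrightarrow F^n,\qquad \Phi=(f_1,\dots,f_{n-k},x_{i_1},\dots,x_{i_k}).$$
By construction the Jacobian of $\Phi$ at $0$ is nondegenerate, so the non-Archimedean analytic inverse function theorem produces an open neighborhood $V' \subset V_0'$ of $0$ on which $\Phi$ restricts to an analytic isomorphism $V' \to W$ onto some open $W\subset F^n$.

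Setting $U := \psi_0^{-1}(V')$ and $\phi := \Phi\circ \psi_0|_U$ then gives the required chart: in these coordinates $\phi(Y\cap U)$ is cut out by the vanishing of the first $n-k$ coordinates of $F^n$, so $\phi(Y\cap U) = W\cap F^k$ where $F^k\subset F^n$ is the coordinate subspace spanned by the remaining axes $x_{i_1},\dots,x_{i_k}$. The only non-formal ingredient is the analytic inverse function theorem for maps between open subsets of $F^n$, which is classical over complete non-Archimedean fields (and reduces via Hensel's lemma, or equivalently a contraction-mapping argument on power series, to inverting the linear part). I do not foresee any genuine obstacle here; the lemma is stated only to serve as a convenient coordinate-straightening device for the proof of Theorem \ref{W_Gab_ann}.
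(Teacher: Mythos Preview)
Your proof is correct. The paper does not actually supply a proof of this lemma; it merely introduces it as ``the following standard lemma which is a version of the implicit function theorem'' and then invokes it as a black box in the proof of Theorem~\ref{W_Gab_ann}. Your argument, reducing to the analytic inverse function theorem after completing a local defining system for $Y$ to a full coordinate system, is exactly the standard way to establish this coordinate-straightening result, so there is nothing to compare.
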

\begin{proof}[Proof of theorem \ref{W_Gab_ann}]
$ $

Case 1: $X = F^n$, $Y = F^k$.\\
in this case the theorem follows from the fact that if a
distribution on $F^n$ is supported on $F^k$ then its Fourier
transform is  invariant with respect to shifts by the orthogonal
complement to $F^k$.

Case 2: $X = U \subset F^n$, $Y = F^k \cap U$, where $U \subset F^n$ is open.\\
Follows immediately from the previous case.

Case 3: the general case.\\
Follows from the previous case using the lemma and theorem
\ref{iso}.
\end{proof}
\begin{theorem}
Let an algebraic group $G$ act on a smooth algebraic variety $X$.
Let $\g$ be the  Lie algebra of $G$. Let $\xi \in \Sc^*(X)^G$.
Then $WF(\xi) \subset \{(x,v) \in T^*X(F)|v(\g x)=0 \}.$
\end{theorem}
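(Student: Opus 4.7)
The plan is to show that any $(x_0,v_0) \in T^*X(F)$ with $v_0(\g x_0) \neq 0$ is a smooth point of $\xi$. Pick $\alpha \in \g$ with $c := v_0(\alpha(x_0)) \neq 0$; in particular the vector field on $X(F)$ determined by $\alpha$ is nonzero at $x_0$. First, I would straighten this vector field: by the implicit function theorem over $F$ there exist local analytic coordinates $(x_1,\dots,x_n)$ on an open neighborhood $U$ of $x_0$ in which $\alpha$ coincides with $\partial/\partial x_1$. By Corollary \ref{iso} we may freely work in these coordinates; in them the covector $v_0$ has first coordinate equal to $c \neq 0$.

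Next, I would convert this infinitesimal information into genuine translation invariance of $\xi$. The analytic exponential map in the non-Archimedean setting provides elements $\exp(t\alpha) \in G(F)$ for $t$ in an open compact subgroup $T\subset F$, and the action of $\exp(t\alpha)$ on $X(F)$ is by construction the time-$t$ flow of the vector field $\alpha$; in our rectified coordinates this is simply the translation $\tau_t(x) = x+te_1$. The hypothesis $\xi\in\Sc^*(X)^G$ therefore forces $\xi(\psi(x+te_1)) = \xi(\psi(x))$ for every $\psi\in\Sc(U)$ and every $t\in T$.

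The main estimate is then a short Fourier-theoretic calculation. Fix $\phi \in \Sc(A)$ for some small neighborhood $A\subset U$ of $x_0$. Since $\phi$ is locally constant with compact support, there is an open compact subgroup $T'\subset T$ such that $\phi(\cdot+te_1)=\phi(\cdot)$ for all $t\in T'$. Then for any $v\in F^n$ and any $t\in T'$,
\[
\widehat{\phi\xi}(v)=\xi\bigl(\phi(x)\kappa(v\cdot x)\bigr)=\xi\bigl(\phi(x+te_1)\kappa(v\cdot(x+te_1))\bigr)=\kappa(v_1 t)\,\widehat{\phi\xi}(v),
\]
using the $T$-invariance of $\xi$ together with the $T'$-invariance of $\phi$. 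Hence $\widehat{\phi\xi}(v)=0$ unless $v_1\in(T')^\perp$, a bounded subset of $F$. Shrinking $B$ around $v_0$ so that $|v_1|\ge|c|/2$ on $B$, for $|\lambda|$ sufficiently large $\lambda B$ avoids $\{v_1\in(T')^\perp\}$, whence $\widehat{\phi\xi}|_{\lambda B}=0$; this shows $(x_0,v_0)\notin WF(\xi)$.

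The one place where the non-Archimedean setting is genuinely more delicate than the algebraic or Archimedean one is the second step: one must promote the infinitesimal ($\g$-level) invariance into honest invariance under an open subgroup of translations in the $x_1$-direction. This requires the analytic one-parameter subgroup $t\mapsto\exp(t\alpha)\in G(F)$ to be defined on a full open neighborhood of $0$ in $F$ and to implement the flow of $\alpha$ on $X(F)$, which is standard in characteristic zero. Once this is available, the character-averaging argument above is a clean substitute for the Archimedean fact that $\alpha\,\xi=0$ as a distribution.
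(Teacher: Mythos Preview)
Your argument is correct, but it takes a different route from the paper. The paper's proof is global and coordinate-free: it compares the two maps $m,p:G\times X\to X$ (action and projection), notes that $G$-invariance says $m^*\xi=p^*\xi$, and then invokes Heifetz's submersion theorem (Theorem~\ref{submrtion}) to get $p^*(WF(\xi))=WF(p^*\xi)=WF(m^*\xi)\subset m^*(WF(\xi))$; unwinding this inclusion at a point $(e,x)$ gives exactly $v(\g x)=0$. Your approach instead localizes: you straighten the vector field $\alpha$ using the flow $t\mapsto\exp(t\alpha)\cdot x$, so that invariance becomes honest translation-invariance in one coordinate, and then you read off the Fourier-side support directly.

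Two small remarks. First, your appeal to ``the implicit function theorem'' for straightening $\alpha$ is really the flow-box construction: one uses the map $(t,\sigma)\mapsto \exp(t\alpha)\cdot\sigma$ from $T\times\Sigma$ to $X$ (with $\Sigma$ a transversal at $x_0$) together with the inverse function theorem; it is precisely this construction that guarantees $\exp(s\alpha)$ acts as $x\mapsto x+se_1$ in the new coordinates. Second, your approach needs only Corollary~\ref{iso} (behavior under analytic isomorphisms) rather than the full submersion theorem, at the cost of invoking the $p$-adic exponential for $G$. The paper's argument, by contrast, packages all of this into one application of Theorem~\ref{submrtion} and hence extends verbatim to the equivariant-sheaf setting of Theorem~\ref{G_inv_dist}. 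Both are valid; the paper's is slicker, yours is more hands-on and makes the mechanism (support of $\widehat{\phi\xi}$ in a bounded $v_1$-strip) completely explicit.
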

We will prove a slightly more general theorem.
\begin{theorem}\label{G_inv_dist}
Let an analytic group $G$ act on an analytic manifold $X$. Let $E$
be a $G$-equivariant locally constant sheaf on $X$. Let $\xi \in
\G(X,E)^G$. Then $WF(\xi) \subset \{(x,v) \in T^*X(F)|v(\g x)=0
\}.$
\end{theorem}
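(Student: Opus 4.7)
The plan is to prove the contrapositive: suppose $v_0 \in T_{x_0}^*X$ satisfies $v_0(\alpha(x_0)) \neq 0$ for some $\alpha \in \g$; I will show $(x_0, v_0) \notin WF(\xi)$. Since the wave front set and $G$-invariance are local notions near $x_0$, I would first trivialize $E$ on a small neighborhood so that $\xi$ becomes a finite tuple of scalar distributions. Because any locally constant $G$-equivariant sheaf admits a local trivialization under which a small enough open subgroup of $G$ acts trivially on fibers, the subgroup-invariance needed below survives trivialization.

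Next I would straighten the vector field $\alpha$ near $x_0$. The analytic group $G$ carries an exponential map producing an analytic homomorphism $t \mapsto \exp(t\alpha)$ from an open subgroup $U \subset F$ into $G$. Choose an analytic submanifold $Y \subset X$ through $x_0$ of codimension $1$ whose tangent space at $x_0$ is complementary to $F\cdot \alpha(x_0)$, and consider
\[ \Phi : U \times Y \longrightarrow X, \qquad (t, y) \longmapsto \exp(t\alpha)\cdot y. \]
Since $\alpha(x_0) \neq 0$, the differential of $\Phi$ at $(0, x_0)$ is an isomorphism, so by the non-Archimedean analytic inverse function theorem $\Phi$ is a local analytic isomorphism onto a neighborhood of $x_0$. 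In these straightened coordinates the subgroup $\{\exp(t\alpha) : t \in U\}$ acts by translation in $t$, and the hypothesis $v_0(\alpha(x_0)) \neq 0$ becomes the statement that the component $\tau_0$ of $v_0$ dual to $t$ is nonzero.

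The $G$-invariance of $\xi$ then forces $\Phi^*\xi$ to be invariant under translation by a small open subgroup $U' \subset U$ in $t$; shrinking the neighborhood so that its $t$-projection lies in a single $U'$-coset, this invariance gives $\Phi^*\xi = \eta \otimes dt$ locally, with $\eta$ a distribution on $Y$ and $dt$ a Haar measure on $F$. For $\phi \in \Sc(U' \times Y)$ with sufficiently small support one computes
\[ \widehat{\phi\,\Phi^*\xi}(y^*, \tau) = \int_Y \widehat{\phi(y, \cdot)}(\tau)\, \kappa(-\langle y^*, y \rangle)\, d\eta(y). \]
Because $\phi$ is locally constant in $t$, its partial Fourier transform $\widehat{\phi(y, \cdot)}(\tau)$ is supported in a fixed compact set $K \subset F$ uniformly in $y$, so the right-hand side vanishes whenever $\tau \notin K$. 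Choosing a neighborhood $B$ of $v_0$ whose $\tau$-component is bounded away from $0$, for $|\lambda|$ large $\lambda B$ lies in the region where $|\tau|$ exceeds all of $K$, hence $\widehat{\phi\,\Phi^*\xi}$ vanishes on $\lambda B$. Pulling back via Corollary \ref{iso}, this yields $(x_0, v_0) \notin WF(\xi)$.

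The main obstacle will be carrying out the straightening step rigorously in the non-Archimedean analytic category: one needs the exponential map of the analytic group $G$, the existence of an analytic transversal $Y$, and the analytic inverse function theorem to show that $\Phi$ is a local isomorphism; one must also ensure that the trivialization of $E$ is compatible with the small subgroup in use. Once the straightening is in place, the remainder is the formal Fourier calculation showing that translation-invariant distributions have wave front set contained in the conormal to the translation direction, which proceeds exactly as in the Archimedean case.
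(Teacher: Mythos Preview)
Your approach is correct but takes a genuinely different route from the paper. The paper's proof is global and almost formula-free: it considers the action map $m:G\times X\to X$ and the projection $p:G\times X\to X$, observes that $G$-invariance gives $p^*\xi=m^*\xi$, and then compares $WF(p^*\xi)=p^*(WF(\xi))$ (an easy lemma about products) with $WF(m^*\xi)\subset m^*(WF(\xi))$ (Theorem~\ref{submrtion}). The resulting inclusion $p^*(S)\subset m^*(S)$, read off at points $(e,x)$, is exactly the statement that every $(x,v)\in S$ satisfies $v(\g x)=0$. No exponential map, no transversal, no coordinate straightening, and no sheaf-trivialization bookkeeping are needed.

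Your argument instead straightens a single orbit direction via $\Phi(t,y)=\exp(t\alpha)\cdot y$, reduces to a translation-invariant distribution in the $t$-variable, and then does a direct Fourier computation showing the wave front avoids covectors with nonzero $t$-component. This is the classical microlocal picture and gives good intuition; the technical ingredients you flag (p-adic exponential on a small ball, analytic inverse function theorem, existence of a transversal slice, and the fact that a small open subgroup acts trivially on fibers of a locally constant equivariant sheaf) are all standard and available, so the outline can be completed. The cost is that you must manage these local constructions and the compatibility of the sheaf trivialization with the subgroup action, whereas the paper's two-line comparison of $m^*$ and $p^*$ sidesteps all of this by applying the submersion bound once to $m$. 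Both proofs ultimately rest on Heifetz's pullback results; the paper simply deploys them more economically.
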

In order to prove this theorem we will need the following easy
lemma.
\begin{lemma}
Let $X,Y$ be analytic manifolds. Let $E$ be a locally constant
sheaf on $X$. Let $\xi \in \G(X,E)$. Let $p : X \times Y  \to X$
be the projection. Then $WF(p^*(\xi)) = p^*(WF(\xi)).$
\end{lemma}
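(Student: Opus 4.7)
The inclusion $WF(p^*\xi) \subset p^*(WF(\xi))$ is immediate from Theorem \ref{submrtion} applied to the submersion $p$, so the whole content of the lemma is the reverse inclusion. Unpacking the notation, $p^*(WF(\xi))$ consists of the points $((x,y),(v,0)) \in T^*(X \times Y)$ with $(x,v) \in WF(\xi)$, so my plan is to show by contraposition that if $p^*\xi$ is smooth at such a point, then $\xi$ is already smooth at $(x,v)$.

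The argument is local, and since pullback by an analytic isomorphism commutes with $WF$ (Corollary \ref{iso}), I would reduce to the case where $X$ and $Y$ are open subsets of $F^n$ and $F^m$ respectively and $p$ is the literal coordinate projection. In this chart the pullback is concretely $p^*\xi = \xi \boxtimes dy$, where $dy$ is the Haar measure on $Y$, so for a product test function $\phi = \psi \otimes \chi$ with $\psi \in \Sc(X)$ and $\chi \in \Sc(Y)$ we get the product factorization
\[
\widehat{\phi \cdot p^*\xi}(v,w) = \widehat{\psi\xi}(v) \cdot \widehat{\chi}(w).
\]

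Now suppose $p^*\xi$ is smooth at $((x,y),(v,0))$, witnessed by neighborhoods $A \ni (x,y)$ and $B \ni (v,0)$. Shrink $A$ to a product $A' \times A''$ with $A' \ni x$, $A'' \ni y$, and shrink $B$ to a product $B' \times B''$ with $B' \ni v$ in $T^*_x X$ and $B'' \ni 0$ in $T^*_y Y$. Fix once and for all a $\chi \in \Sc(A'')$ with $\widehat{\chi}(0) = \int \chi = 1$. Then for every $\psi \in \Sc(A')$, setting $\phi = \psi \otimes \chi$ gives some $N_\phi$ for which $\widehat{\phi \cdot p^*\xi}$ vanishes on $\lambda(B' \times B'')$ for $|\lambda| > N_\phi$. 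Evaluating the factorization above at points $(\lambda v', 0) \in \lambda(B' \times B'')$ for $v' \in B'$ yields $\widehat{\psi\xi}(\lambda v') \cdot \widehat{\chi}(0) = 0$, i.e.\ $\widehat{\psi\xi}|_{\lambda B'} = 0$ for $|\lambda| > N_\phi$. Since $\psi \in \Sc(A')$ was arbitrary, this is exactly the definition of $\xi$ being smooth at $(x,v)$, contradicting $(x,v) \in WF(\xi)$.

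There is essentially no hard step: the one thing to watch is the choice of $\chi$ so that the $Y$-factor $\widehat{\chi}(\lambda \cdot 0) = \widehat{\chi}(0)$ is a fixed nonzero constant independent of $\lambda$, which is what lets the vanishing of the full two-variable Fourier transform descend to the vanishing of the one-variable one. The only mildly technical point is the reduction to the product chart, but that is handled uniformly by the implicit-function-type lemma used earlier in this section together with Corollary \ref{iso}.
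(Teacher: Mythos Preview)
Your argument is correct: the paper does not actually prove this lemma, merely labeling it an ``easy lemma'' and moving on to the proof of Theorem~\ref{G_inv_dist}. Your approach---one inclusion by Theorem~\ref{submrtion}, the other by localizing to a product chart, writing $p^*\xi=\xi\boxtimes dy$, and probing with product test functions $\psi\otimes\chi$ with $\widehat{\chi}(0)\neq 0$---is exactly the straightforward computation the paper is implicitly invoking.
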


\begin{proof}[Proof of theorem \ref{G_inv_dist}]
Consider the action map $m: G \times X \to X$ and the projection
$p: G \times X \to X$. Let $S := WF(\xi)$. We are given an
isomorphism $p^*(E) \cong m^*(E)$ and we know that under this
identification $p^*(\xi)=m^*(\xi)$. Therefore
$WF(p^*(\xi))=WF(m^*(\xi))$. By the lemma we have $WF(p^*(\xi))=
p^*(S)$. by theorem \ref{submrtion} we have $WF(m^*(\xi)) \subset
m^*(S)$. Thus we got $p^*(S) \subset m^*(S)$ which implies the
requested inclusion.
\end{proof}
\subsection{Singular support} \label{SS}
\begin{definition}
Let $X$ be a smooth algebraic variety let $\xi \in \Sc^*(X(F))$.
We will now define the singular support of $\xi$, it is an
algebraic subvariety of $T^*X$ and we will denote it by
$SS(\xi)$.

In the case when $F$ is non-Archimedean we define it to be the
Zariski closure of $WF(\xi)$. In the case when $F$ is Archimedean
we define it to be the singular support of the $D_X$-module
generated by $\xi$ (as in \cite{AG_AMOT}).
\end{definition}

In \cite[section 2.3]{AG_AMOT} the following list of
properties of the singular support for the Archimedean case was introduced:

Let $X$ be a smooth algebraic variety.
\begin{enumerate}
\item \label{Supp2SS}
Let $\xi \in \Sc^*(X (F))$.  Then $\overline{\Supp(\xi)}_{Zar} =
p_X(SS(\xi))(F)$, where $\overline{\Supp(\xi)}_{Zar} $ denotes
the Zariski  closure of
 $\Supp(\xi)$.
\item \label{Ginv}
Let an algebraic group $G$ act on $X$. Let $\g$ denote the
Lie algebra of $G$. Let $\xi \in \Sc^*(X(F))^{G(F)}$. Then
$$SS(\xi) \subset \{(x,\phi) \in T^*X \, | \, \forall \alpha \in
\g \, \phi(\alpha(x)) =0\}.$$
\item \label{Fou}
Let $V$ be a linear space. Let $Z \subset X \times V$ be a closed
subvariety, invariant with respect to homotheties in $V$. Suppose
that $\Supp(\xi) \subset Z(F)$. Then $SS(\Fou_V(\xi)) \subset
F_V(p_{X \times V}^{-1}(Z))$.
\item \label{Gab}
Let $X$ be a smooth algebraic
variety. Let $\xi \in \Sc^*(X(F))$. Then $SS(\xi)$ is
coisotropic.
\end{enumerate}
\begin{remark}
Property \ref{Gab} is a corollary of the integrability theorem
(see \cite{KKS, Mal,Gab}).
\end{remark}
The result of the last subsection shows that those properties are
satisfied for the non-Archimedean case with the following
modification, property \ref{Gab} should be replaced by the
following weaker one:

(4') $\,$ Let $X$ be a smooth algebraic variety. Let $\xi \in \Sc^*(X(F))$. Then $SS(\xi)$ is
weakly coisotropic.

We conjecture that property \ref{Gab} holds for the
non-Archimedean case without modification.
\subsection{Distributions on non distinguished nilpotent orbits} \label{nondist}
$ $\\
In this subsection we deduce from the properties of singular
support some technical results that are useful for proving Gelfand
property.

\begin{notation}
Let $V$ be an algebraic finite dimensional representation over $F$
of a reductive group $G$. We denote $$Q(V):=(V/V^G)(F).$$ Since $G$ is reductive, there is a canonical embedding $Q(V) \hookrightarrow V(F)$. We also denote
$$\Gamma(V) =\{y \in V(F) \, | \, \overline{G(F)}y \ni 0\}.$$
Note that $\Gamma(V)\subset Q(V)$. We denote also $R(V):= Q(V)-\Gamma(V).$
\end{notation}

\begin{definition}
Let $V$ be an algebraic finite dimensional representation over $F$
of a reductive group $G$. Suppose that there is a finite number of
$G$ orbits in $\Gamma(V).$ Let $x \in \Gamma(V).$ We will call it
$G$-distinguished, if $CN_{G x, x}^{Q(V)} \subset \Gamma(V^*)$. We
will call a $G$ orbit $G$-distinguished if all (or equivalently
one of) its elements are $G$- distinguished.

If there is no ambiguity we will omit the "$G$-".
\end{definition}

\begin{example} \label{group_case}
For the case of a semi-simple group acting on its Lie algebra, the
notion of $G$-distinguished element coincides with the standard
notion of distinguished nilpotent element. In particular, in the case when $G=SL_n$ and $V = sl_n$ the set of $G$-distinguished elements is exactly the set of regular nilpotent elements.
\end{example}

\begin{proposition} \label{non_disting_no_cois}
Let $V$ be an algebraic finite dimensional representation over $F$
of a reductive group $G$. Suppose that there is a finite number of
$G$ orbits on $\Gamma(V).$ Let $W:=Q(V)$, let $A$ be the set of
non-distinguished elements in $\Gamma(V).$ Then there are no non-empty
$W \times W^*$-weakly coisotropic subvarieties of $A \times
\Gamma(V^*).$
\end{proposition}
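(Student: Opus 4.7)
My plan is to argue by contradiction: assume $Z\subset A\times\Gamma(V^*)$ is a non-empty $W\times W^*$-weakly coisotropic subvariety and derive that some point of $A$ must be $G$-distinguished. Replacing $Z$ by one of its irreducible components, I may assume $Z$ is irreducible. Since $A$ is a union of finitely many $G$-orbits, the image $p_W(Z)\subset A$ lies generically in a single orbit $O\subset A$, so a Zariski-open subset of $p_W(Z)$ is contained in $O$. I would then pick a sufficiently generic smooth $F$-point $(x,y)\in Z$ for which $x\in O$ is a smooth point of $p_W(Z)$ and $y$ is a smooth point of the fiber $Z_x := p_W^{-1}(x)\cap Z$. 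Since $p_W(Z)$ is locally contained in $O$ near $x$, we have $T_x p_W(Z)\subset T_x O = \g x$, and dually $CN_{O,x}^W\subset CN_{p_W(Z),x}^W$.

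Next I invoke characterization (iv) of weakly coisotropic subvarieties: there is a Zariski-open dense subset of the affine subspace $y + CN_{p_W(Z),x}^W$ sitting inside $Z_x\subset\Gamma(V^*)$. The key observation is that $\Gamma(V^*)$ is Zariski closed in $V^*$---it is the finite union of the orbit closures of the orbits contained in it, or equivalently the nullcone of the positive-degree $G$-invariant polynomials---so this Zariski-local containment propagates to the whole affine subspace. In particular $y + CN_{O,x}^W\subset\Gamma(V^*)$. Finally I use the conicity of $\Gamma(V^*)$: it is $F^\times$-invariant and contains $0$, so for every $\lambda\in F^\times$ the translate $\lambda y + CN_{O,x}^W = \lambda(y + CN_{O,x}^W)$ lies in $\Gamma(V^*)$. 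The union $F^\times y + CN_{O,x}^W$ is therefore contained in $\Gamma(V^*)$, and since $F$ is infinite its Zariski closure $Fy + CN_{O,x}^W$ is also contained in $\Gamma(V^*)$; setting $\lambda = 0$ yields $CN_{O,x}^W\subset\Gamma(V^*)$, contradicting the non-distinguishedness of $x$.

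The hard part is the middle step: the weakly coisotropic condition only provides a Zariski-local shift-invariance of a single fiber at a smooth point, while the hypothesis $Z\subset A\times\Gamma(V^*)$ is a global set-theoretic constraint. The bridge is the Zariski closedness of $\Gamma(V^*)$, which upgrades the local containment to one over the full affine fiber; conicity of $\Gamma(V^*)$ then upgrades the affine containment to a linear one. Once the linear containment $CN_{O,x}^W\subset\Gamma(V^*)$ is in hand, the contradiction with the definition of "non-distinguished" is immediate.
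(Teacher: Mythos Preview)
Your proof is correct and is precisely the argument the paper has in mind: the paper simply writes ``The proof is clear'' and gives no further details, so what you have written is a faithful unpacking of that claim. The key ingredients---reducing to an irreducible $Z$ whose projection lies generically in a single orbit $O$, using condition (iv) of weak coisotropicness to get $y+CN_{p_W(Z),x}^W$ generically inside $\Gamma(V^*)$, then invoking Zariski closedness and conicity of the nullcone $\Gamma(V^*)$ to strip off the $y$---are exactly the steps that make the statement ``clear'' once the definitions are in hand. One small remark: your justification that $\Gamma(V^*)$ is Zariski closed via ``finite union of orbit closures'' tacitly assumes finiteness of orbits on $\Gamma(V^*)$, which is not part of the hypothesis; your alternative description as the nullcone (zero locus of positive-degree invariants) is the correct reason and suffices on its own.
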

The proof is clear.
\begin{corollary} \label{non_disting_no_dists}
Let $\xi \in \Sc^*(W)$ and suppose that  $\Supp(\xi) \subset
\Gamma(V)$ and $supp(\widehat{\xi}) \subset  \Gamma(V^*)$. Then the
set of distinguished elements in $\Supp(\xi)$ is dense in
$\Supp(\xi)$
\end{corollary}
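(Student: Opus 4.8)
The plan is to reduce Corollary \ref{non_disting_no_dists} to Proposition \ref{non_disting_no_cois} by way of the singular support machinery established in subsection \ref{SS}, in particular properties \ref{Supp2SS}, \ref{Fou} and (4'). First I would set $W := Q(V)$ and consider the distribution $\xi$ together with its Fourier transform $\widehat{\xi} = \Fou_W(\xi)$; here I use that $W$ carries a $G$-invariant non-degenerate symmetric bilinear form (since $G$ is reductive and $W = V/V^G$ is a complement to the invariants, this identifies $W^*$ with $W$ equivariantly). The hypotheses say $\Supp(\xi) \subset \Gamma(V)$ and $\Supp(\widehat\xi) \subset \Gamma(V^*)$. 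By property \ref{Supp2SS} we have $p_W(SS(\xi))(F) = \overline{\Supp(\xi)}_{Zar} \subset \Gamma(V)$ (note $\Gamma(V)$ is already Zariski closed, being a finite union of orbit closures), and by property \ref{Fou} applied with $Z = \Gamma(V^*) \times W$, which is homothety-invariant, we get $SS(\xi) \subset F_W(p_{W}^{-1}(\Gamma(V^*)))$, i.e. $SS(\xi) \subset \Gamma(V) \times \Gamma(V^*)$ as a subvariety of $T^*W = W \times W^*$.

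Next I would argue by contradiction: suppose the distinguished elements are not dense in $\Supp(\xi)$. Then some irreducible component $C$ of $\overline{\Supp(\xi)}_{Zar}$ is contained in the closed set $A$ of non-distinguished elements of $\Gamma(V)$ (using the finiteness of orbits, $A$ is a finite union of orbit closures, hence closed). Let $S_0 \subset SS(\xi)$ be the union of the irreducible components of $SS(\xi)$ lying over $C$ — more precisely, since $p_W(SS(\xi)) \supset C$, there is at least one component $S_0$ of $SS(\xi)$ with $p_W(S_0)$ Zariski-dense in $C$; then $S_0 \subset A \times \Gamma(V^*)$. Now by (4'), $SS(\xi)$ is weakly coisotropic; I would like to conclude that $S_0$, or rather the maximal weakly coisotropic subvariety supported over $C$, is non-empty and weakly coisotropic, contradicting Proposition \ref{non_disting_no_cois} which asserts $A \times \Gamma(V^*)$ has no non-empty weakly coisotropic subvarieties.

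The main obstacle I anticipate is the passage from "$SS(\xi)$ is weakly coisotropic" to "there is a non-empty weakly coisotropic subvariety contained in $A \times \Gamma(V^*)$." Weak coisotropy is a condition at generic smooth points, so it is not automatically inherited by arbitrary unions of components; I need to know that $SS(\xi)$ being weakly coisotropic forces \emph{each} of its irreducible components to be weakly coisotropic (or at least that the union of components lying over $C$ is). This should follow from condition (ii) in the definition of $T^*X$-weakly coisotropic — a generic smooth point of a component is a generic smooth point of $SS(\xi)$ restricted to that component — so the union $S_0$ of the components over $C$ is itself weakly coisotropic and non-empty, and then $S_0 \subset A \times \Gamma(V^*)$ gives the contradiction with Proposition \ref{non_disting_no_cois}. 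I would also double-check the bookkeeping that makes "not dense" produce such a component $C$ inside $A$: if $\Supp(\xi)$ is not contained in the closure of its distinguished points, then since $\Gamma(V)$ has finitely many orbits, $\overline{\Supp(\xi)}_{Zar}$ must contain an orbit closure all of whose points are non-distinguished, which lands in $A$; this is where the finiteness hypothesis is essential.
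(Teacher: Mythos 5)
Your overall skeleton is the intended one: combine properties \ref{Supp2SS}, \ref{Fou} and (4') of the singular support with Proposition \ref{non_disting_no_cois}, and your observation that each irreducible component of a weakly coisotropic variety is again weakly coisotropic is correct. The genuine gap is the reduction step ``not dense $\Rightarrow$ some irreducible component $C$ of $\overline{\Supp(\xi)}_{Zar}$ is contained in $A$''. First, your closedness claim for $A$ (``a finite union of orbit closures, hence closed'') presupposes that the closure of a non-distinguished orbit contains no distinguished element; this is neither stated in the paper nor obvious, since distinguishedness is defined by $CN_{Gx,x}^{Q(V)}\subset\Gamma(V^*)$ and has no evident monotonicity under orbit degeneration. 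Moreover the corollary does not assume $\xi$ to be $G$-invariant, so $\overline{\Supp(\xi)}_{Zar}$ need not contain any orbit closure at all, which undercuts your fallback sentence. Second, and more seriously, Zariski closure mixes the two loci: an irreducible component $C=\overline{\Supp(\xi)\cap C}_{Zar}$ can contain both the offending non-distinguished part of $\Supp(\xi)$ and distinguished points (possibly far away in the local-field topology, possibly not even in $\Supp(\xi)$). In that situation no component of $\overline{\Supp(\xi)}_{Zar}$, hence no component of $SS(\xi)$, lies inside $A\times\Gamma(V^*)$, and Proposition \ref{non_disting_no_cois} used as a black box produces no contradiction, although density may still fail. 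Note also that the density needed in the application (proof of Theorem \ref{neg_dist_def}) is density in the local-field topology, because the strata $X_{i-1}$ there are closed only analytically; so one cannot rescue your argument by reading ``dense'' as a statement about components of the Zariski closure.

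What a complete argument needs is the \emph{proof} of Proposition \ref{non_disting_no_cois}, run locally, rather than its statement. If the distinguished points were not dense, choose an open (in the local-field topology) set $U$ meeting $\Supp(\xi)$ and containing no distinguished point of $\Supp(\xi)$. The bound $WF(\xi)\subset W\times\overline{\Gamma(V^*)}_{Zar}$ coming from $\Supp(\widehat{\xi})\subset\Gamma(V^*)$ and the shift-invariance of the fibers of the wave front set over smooth points of a variety containing the support (Theorem \ref{W_Gab_ann}) are local statements, so the Zariski closure $T$ of $WF(\xi|_U)$ is non-empty, weakly coisotropic, contained in $W\times\overline{\Gamma(V^*)}_{Zar}$, and $p_W(T)=\overline{\Supp(\xi)\cap U}_{Zar}$. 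For an irreducible component $T_0$, finiteness of orbits gives a single orbit $O$ such that $p_W(T_0)\cap O\cap\Supp(\xi)$ is Zariski dense in $p_W(T_0)\subset\overline{O}_{Zar}$; at a smooth point $x$ of $p_W(T_0)$ lying in this set, shift-invariance of the fiber by $CN_{p_W(T_0),x}^{W}\supset CN_{Gx,x}^{W}$ inside the closed cone $\overline{\Gamma(V^*)}$ forces $CN_{Gx,x}^{Q(V)}\subset\Gamma(V^*)$, i.e.\ $x$ is a distinguished point of $\Supp(\xi)\cap U$, contradicting the choice of $U$. Finally, a minor point: your parenthetical claim that $Q(V)$ carries a $G$-invariant non-degenerate symmetric bilinear form because $G$ is reductive is false in general (reductivity gives the splitting $V=V^G\oplus Q(V)$, not self-duality); it is also unnecessary, since the hypotheses involve $\widehat{\xi}$ on $W^*$ and the whole argument takes place in $T^*W=W\times W^*$.
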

\begin{remark}
In the same way one can prove an analogus result for distributions on $W \times M$ for any analitic manifold $M$.
\end{remark}

\section{Applications towards Gelfand properties of symmetric pairs} \label{GelSym}
\setcounter{lemma}{0} In this section we will use the property of
singular support to generate the results of \cite{Say} for any
local field of characteristic $0$. Namely we prove that a big
class of symmetric pairs are {\it regular}. The property of
regularity of symmetric pair was introduced in \cite{AG_HC} and
was shown to be useful for proving Gelfand property. We will give
more details on the regularity property and its connections with
Gelfand property in subsubsections \ref{SymPar}-\ref{def}.
\subsection{Preliminaries} \label{GelSym_prel}$ $\\
In this subsection we give the necessary
preliminaries for section \ref{GelSym}.
\subsubsection{Gelfand pairs} \label{Gel}
$ $\\
In this subsubsection we recall a technique due to Gelfand and Kazhdan
(see \cite{GK}) which allows to deduce statements in representation
theory from statements on invariant distributions. For more
detailed description see \cite[section 2]{AGS}.

\begin{definition}
Let $G$ be a reductive group. By an \textbf{admissible
representation of} $G$ we mean an admissible representation of
$G(F)$ if $F$ is non-Archimedean (see \cite{BZ}) and admissible
smooth \Fre representation of $G(F)$ if $F$ is Archimedean.
\end{definition}

We now introduce three notions of Gelfand pair.

\begin{definition}\label{GPs}
Let $H \subset G$ be a pair of reductive groups.
\begin{itemize}
\item We say that $(G,H)$ satisfy {\bf GP1} if for any irreducible
admissible representation $(\pi,E)$ of $G$ we have
$$\dim Hom_{H(F)}(E,\cc) \leq 1$$

\item We say that $(G,H)$ satisfy {\bf GP2} if for any irreducible
admissible representation $(\pi,E)$ of $G$ we have
$$\dim Hom_{H(F)}(E,\cc) \cdot \dim Hom_{H(F)}(\widetilde{E},\cc)\leq
1$$

\item We say that $(G,H)$ satisfy {\bf GP3} if for any irreducible
{\bf unitary} representation $(\pi,\mathcal{H})$ of $G(F)$ on a
Hilbert space $\mathcal{H}$ we have
$$\dim Hom_{H(F)}(\mathcal{H}^{\infty},\cc) \leq 1.$$
\end{itemize}

\end{definition}
Property GP1 was established by Gelfand and Kazhdan in certain
$p$-adic cases (see \cite{GK}). Property GP2 was introduced in
\cite{Gross} in the $p$-adic setting. Property GP3 was studied
extensively by various authors under the name {\bf generalized
Gelfand pair} both in the real and $p$-adic settings (see e.g.
\cite{vD,BvD}).

We have the following straightforward proposition.

\begin{proposition}
$GP1 \Rightarrow GP2 \Rightarrow GP3.$
\end{proposition}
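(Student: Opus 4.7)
The plan is to observe that both implications are essentially formal once one recalls three standard facts about admissible and unitary representations: (a) the smooth contragredient of an irreducible admissible representation is again irreducible admissible, (b) for an irreducible unitary representation $(\pi,\mathcal{H})$ the subspace of smooth vectors $\mathcal{H}^{\infty}$ is an irreducible admissible representation, and (c) the Hermitian inner product on a unitary representation induces a conjugate-linear $G(F)$-equivariant isomorphism between $\mathcal{H}^{\infty}$ and the smooth contragredient $\widetilde{\mathcal{H}^{\infty}}$. Facts (a) and (b) hold in both the $p$-adic setting (via \cite{BZ}) and the Archimedean setting.

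For GP1 $\Rightarrow$ GP2, let $(\pi,E)$ be irreducible admissible. By fact (a), $\widetilde{E}$ is again irreducible admissible, so by GP1 applied to each of $E$ and $\widetilde{E}$ we have $\dim \Hom_{H(F)}(E,\cc)\leq 1$ and $\dim \Hom_{H(F)}(\widetilde{E},\cc)\leq 1$; multiplying the two inequalities yields GP2.

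For GP2 $\Rightarrow$ GP3, let $(\pi,\mathcal{H})$ be an irreducible unitary representation. By (b), $\mathcal{H}^{\infty}$ is an irreducible admissible representation, so GP2 applies to it. By (c), the conjugate-linear identification $\mathcal{H}^{\infty}\simeq\overline{\widetilde{\mathcal{H}^{\infty}}}$ descends to a conjugate-linear bijection between $\Hom_{H(F)}(\mathcal{H}^{\infty},\cc)$ and $\Hom_{H(F)}(\widetilde{\mathcal{H}^{\infty}},\cc)$, so these two complex vector spaces have equal (complex) dimension. Substituting into GP2 gives
\[
\bigl(\dim \Hom_{H(F)}(\mathcal{H}^{\infty},\cc)\bigr)^{2} \leq 1,
\]
which forces $\dim \Hom_{H(F)}(\mathcal{H}^{\infty},\cc)\leq 1$, i.e.\ GP3.

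There is no real obstacle here; the argument is a straightforward unpacking of Definition \ref{GPs} together with the standard structural facts recalled above. The most delicate point worth a line of justification is (c), which is just the Riesz representation theorem applied to the unitary pairing and then restricted to smooth vectors, combined with the verification that the resulting antilinear map intertwines the $G(F)$-actions.
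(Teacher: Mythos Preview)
Your argument is correct and is precisely the standard unpacking that the paper has in mind; the paper states the proposition as ``straightforward'' and gives no proof, and what you have written is the routine verification using contragredients and the self-duality of unitary representations. There is nothing to add or compare.
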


We will use the following theorem from \cite{AGS} which is a
version of a classical theorem of Gelfand and Kazhdan.

\begin{theorem}\label{DistCrit}
Let $H \subset G$ be reductive groups and let $\tau$ be an
involutive anti-automorphism of $G$ and assume that $\tau(H)=H$.
Suppose $\tau(\xi)=\xi$ for all bi $H(F)$-invariant distributions
$\xi$ on $G(F)$. Then $(G,H)$ satisfies GP2.
\end{theorem}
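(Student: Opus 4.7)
The proof will follow the classical Gelfand--Kazhdan strategy. Let $(\pi,E)$ be an irreducible admissible representation of $G(F)$, and consider the matrix coefficient map
\[
M: E \otimes \tilde E \longrightarrow C^\infty(G(F)), \qquad M(v\otimes\tilde v)(g) := \langle \tilde v, \pi(g) v\rangle,
\]
which is $G(F)\times G(F)$-equivariant and injective by irreducibility of $\pi$. Taking $H(F)\times H(F)$ invariants, and using the identifications $\Hom_{H(F)}(\tilde E,\cc) \cong E^{H(F)}$ and $\Hom_{H(F)}(E,\cc) \cong \tilde E^{H(F)}$ that come from smooth contragredient duality of admissible representations, this produces an injection
\[
\Hom_{H(F)}(\tilde E,\cc) \otimes \Hom_{H(F)}(E,\cc) \;\hookrightarrow\; C^\infty(G(F))^{H(F)\times H(F)},
\]
whose image consists of bi-$H(F)$-invariant matrix coefficients of $\pi$ and thereby embeds into the space of bi-$H(F)$-invariant distributions.

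Next, I would apply the hypothesis, which forces $\tau$ to act as the identity on this image. A direct calculation shows that the induced action of $\tau$ on matrix coefficients sends those of $\pi$ to matrix coefficients of the twisted representation $\pi^\tau(g):=\pi(\tau(g^{-1}))$, with the roles of $v$ and $\tilde v$ effectively exchanged. Combining this with the identity action and the linear independence of matrix coefficients of inequivalent irreducibles, one concludes that $\pi^\tau \cong \tilde\pi$, which in turn yields a canonical isomorphism $\Hom_{H(F)}(E,\cc) \cong \Hom_{H(F)}(\tilde E,\cc)$.

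Under this identification, the action of $\tau$ on $\Hom_{H(F)}(E,\cc)^{\otimes 2}$ becomes the swap of tensor factors; the hypothesis that this action is the identity forces $\dim \Hom_{H(F)}(E,\cc) \leq 1$. Indeed, the swap acts as the identity on $U\otimes U$ exactly when $\dim U \leq 1$, since its fixed subspace $\operatorname{Sym}^2 U$ has dimension $\binom{\dim U+1}{2}$, which equals $(\dim U)^2$ only in that range. Hence the product $\dim \Hom_{H(F)}(E,\cc) \cdot \dim \Hom_{H(F)}(\tilde E,\cc) \leq 1$, which is precisely GP2.

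The main obstacle is the technical identification $\Hom_{H(F)}(E,\cc) \cong \tilde E^{H(F)}$ in the Archimedean case, which requires the Casselman--Wallach / Schwartz-function formalism from \cite{AGS} to ensure that $H(F)$-invariant functionals automatically correspond to $H(F)$-invariant vectors in the smooth contragredient, and that the matrix coefficient pairing extends to a jointly continuous operation on distributions. In the non-Archimedean $l$-space setting everything is purely algebraic and the argument goes through directly; both cases are uniformly covered by the framework cited in the theorem.
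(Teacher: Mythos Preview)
The paper does not actually prove this theorem; it is quoted from \cite{AGS}. Your outline follows the classical Gelfand--Kazhdan strategy, but it contains a genuine gap, not merely a technical obstacle.

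The identifications $\Hom_{H(F)}(E,\cc) \cong \tilde E^{H(F)}$ and $\Hom_{H(F)}(\tilde E,\cc) \cong E^{H(F)}$ are false in general, in both the Archimedean and non-Archimedean settings. The right-hand sides consist of $H(F)$-fixed \emph{smooth} vectors in the contragredient, whereas the left-hand sides consist of arbitrary (continuous, resp.\ algebraic) $H(F)$-invariant functionals; when $H$ is noncompact these almost never coincide. For instance, in the very pair motivating this paper, $(G,H)=(\mathrm{GL}_{n+1},\mathrm{GL}_n)$, a generic irreducible $\pi$ has $\Hom_{H(F)}(\pi,\cc)\neq 0$ while $\tilde\pi^{H(F)}=0$. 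Consequently your matrix-coefficient map $E\otimes\tilde E\to C^\infty(G(F))$, after passing to $H(F)\times H(F)$-invariants, is typically the zero map, and nothing can be deduced. The Casselman--Wallach theory you invoke does not establish this identification---it cannot, because the identification is simply wrong; and your claim that in the $l$-space setting ``everything is purely algebraic and the argument goes through directly'' is equally incorrect, since an $H(F)$-invariant functional on $E$ need not lie in the smooth dual $\tilde E$.

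The argument in \cite{AGS} (and in Gelfand--Kazhdan's original method) bypasses smooth vectors entirely. One uses the $G(F)\times G(F)$-equivariant surjection $\Sc(G(F))\twoheadrightarrow E\hat\otimes\tilde E$, $f\mapsto\pi(f)$, and dualizes to obtain an injection of the \emph{full} dual, yielding
\[
\Hom_{H(F)}(E,\cc)\otimes\Hom_{H(F)}(\tilde E,\cc)\;\hookrightarrow\;\Sc^*(G(F))^{H(F)\times H(F)}
\]
directly into distributions rather than into smooth functions. Once this embedding is in place, the remainder of your sketch---that $\tau$ interchanges the two tensor factors up to the passage $\pi\leftrightarrow\pi^\tau$, and that a swap acting trivially on $U\otimes U$ forces $\dim U\le 1$---is essentially the correct conclusion of the argument.
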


\begin{remark}
In many cases it terns out that GP2 is equivalent to GP1.
\end{remark}

\subsubsection{Tame actions} \label{Tame}
$ $\\
In this subsubsection we review some tools developed in \cite{AG_HC} for solving problems of the following type. A reductive group $G$ acts on a smooth affine variety $X$, and $\tau$ is an automorphism of $X$ which normalizes the action of $G$. We want to check whether any $G(F)$-invariant Schwartz distribution on $X(F)$ is also $\tau$-invariant.

\begin{definition}
Let $\pi$ be an action of a reductive group $G$ on a smooth affine variety $X$.
We say that an algebraic automorphism $\tau$ of $X$ is \textbf{$G$-admissible} if \\
(i) $\pi(G(F))$ is of index $\leq 2$ in the group of automorphisms
of $X$
generated by $\pi(G(F))$ and $\tau$.\\
(ii) For any closed $G(F)$ orbit $O \subset X(F)$, we have
$\tau(O)=O$.
\end{definition}
\begin{definition}
We call an action of a reductive group $G$ on a smooth affine
variety $X$ \textbf{tame} if for any $G$-admissible $\tau : X \to
X$, we have $\Sc^*(X(F))^{G(F)} \subset \Sc^*(X(F))^{\tau}.$
\end{definition}

\begin{definition}
We call an algebraic representation  of a reductive group $G$ on a
finite dimensional linear space $V$ over $F$ \textbf{linearly
tame} if for any $G$-admissible linear map $\tau : V \to V$, we
have $\Sc^*(V(F))^{G(F)} \subset \Sc^*(V(F))^{\tau}.$

We call a representation \textbf{weakly linearly tame} if for any
$G$-admissible linear map $\tau : V \to V$, such that
$\Sc^*(R(V))^{G(F)} \subset \Sc^*(R(V))^{\tau}$ we have
$\Sc^*(Q(V))^{G(F)} \subset \Sc^*(Q(V))^{\tau}.$
\end{definition}

\begin{theorem} \label{Invol_HC}
Let a reductive group $G$ act on a smooth affine variety $X$.
Suppose that for any $G$-semisimple $x \in X(F)$, the action of
$G_x$ on $N_{Gx,x}^X$ is weakly linearly tame. Then the action of
$G$ on $X$ is tame.
\end{theorem}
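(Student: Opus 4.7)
The plan is Harish-Chandra descent via Luna slices combined with Noetherian induction on the stratification of $X$ by closed $G$-orbit type. Let $\tau$ be $G$-admissible and $\xi\in\Sc^*(X(F))^{G(F)}$; set $\eta:=\tau\xi-\xi$. I want $\eta=0$. The distribution $\eta$ is $G(F)$-invariant, and since $\tau$ preserves every closed $G(F)$-orbit its support lies in the $G$-invariant closed subvariety on which $\tau$ is not the identity. By Proposition \ref{strat} in the non-Archimedean case and Proposition \ref{Arch_strat} in the Archimedean case, the vanishing of $\eta$ can be checked stratum-by-stratum on any $G$-invariant stratification, and in particular on the stratification by closed-orbit type: it suffices to show that $\eta$ vanishes in a $G$-invariant neighborhood of each closed $G$-semisimple orbit $Gx\subset X$.

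Fix such a closed orbit $Gx$ and apply a Luna slice theorem at $x$: in a $G$-saturated analytic neighborhood of $Gx$, the variety $X$ is $G$-equivariantly isomorphic to the homogeneous bundle $G\times^{G_x}V$, where $V:=N_{Gx,x}^X$. Frobenius reciprocity (Theorem \ref{Frob}) identifies $G$-invariant distributions on this bundle with $G_x$-invariant distributions on $V$. Since $\tau$ preserves $Gx$ and normalizes the $G$-action modulo $G$, it descends to a $G_x$-admissible linear automorphism of $V$, and $\xi$ corresponds to some $G_x$-invariant distribution $\xi_x$ on $V$ that we must prove is $\tau$-invariant. A standard reduction along the $G_x$-fixed summand $V^{G_x}$ (which is tangent to the closed-orbit stratum through $x$ and is absorbed into the base of the iterated descent) reduces this to proving $\Sc^*(Q(V))^{G_x(F)}\subset\Sc^*(Q(V))^{\tau}$.

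At this point I invoke the assumed weak linear tameness of $G_x\curvearrowright V$: by definition it reduces the desired containment on $Q(V)$ to the same containment on $R(V)=Q(V)\setminus\Gamma(V)$. A point $v\in R(V)$ satisfies $\overline{G_x\cdot v}\not\ni 0$, so its Jordan semisimple part is nonzero; under the Luna identification $v$ corresponds to a point in $X$ whose associated closed $G$-orbit is distinct from $Gx$. Ordering closed-orbit types by, say, dimension of stabilizer (or any well-founded refinement of the specialization order on the categorical quotient), these new closed orbits are strictly earlier in the induction, so the containment on $R(V)$ is already known by the inductive hypothesis. This closes the argument: $\eta$ vanishes in a neighborhood of $Gx$, and, summing over all closed semisimple orbits, $\eta=0$.

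The principal obstacle is the induction bookkeeping: one has to pin down a well-founded order on closed-orbit types of $X$ so that every $v\in R(V)$ in the slice at $Gx$ genuinely corresponds to a point of $X$ whose closed orbit is strictly smaller in that order, and one has to verify uniformly over Archimedean and non-Archimedean $F$ that the algebraic Luna slice upgrades to an analytic $G$-equivariant local isomorphism compatible with the form of Frobenius reciprocity recorded in Theorem \ref{Frob}. Once this framework is in place, the remaining steps — transfer of admissibility from $\tau$ on $X$ to $\tau$ on $V$, the descent along $V^{G_x}$ reducing $V$ to $Q(V)$, and the global-from-local assembly via Propositions \ref{strat} and \ref{Arch_strat} — are routine.
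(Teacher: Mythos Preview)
The paper does not prove this theorem itself; it simply cites \cite[Theorem 6.0.5]{AG_HC}. Your sketch is precisely the strategy of that reference: generalized Harish-Chandra descent via the analytic Luna slice theorem, Frobenius reciprocity to pass to the slice representation, and Noetherian induction over the categorical quotient, with the ``weakly linearly tame'' hypothesis supplying the inductive step on $Q(V)$ once the statement on $R(V)$ is known from lower strata. So your approach coincides with the paper's (cited) proof.

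Two small comments on your write-up. First, the sentence ``its support lies in the $G$-invariant closed subvariety on which $\tau$ is not the identity'' is not correct and is not needed: admissibility only says $\tau$ preserves each closed $G(F)$-orbit setwise, not pointwise, so $\eta$ need not be supported away from closed orbits; the argument proceeds by localization and induction regardless of $\Supp(\eta)$. Second, the points you flag as ``principal obstacles'' --- the well-founded order on the quotient, the analytic Luna slice uniform in $F$, and the transfer of admissibility from $\tau$ on $X$ to a linear $G_x$-admissible map on the slice --- are exactly the technical content of \cite{AG_HC}, so your self-assessment is accurate: the outline is right, and the missing details are precisely those worked out there.
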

For a proof see \cite[Theorem 6.0.5]{AG_HC}.
\begin{definition}
We call an algebraic representation  of a reductive group $G$ on a
finite dimensional linear space $V$ over $F$ \textbf{special} if
for any $\xi \in \Sc^*_{Q(V)}(\Gamma(V))^{G(F)}$ such that for any
$G$-invariant decomposition $Q(V) = W_1 \oplus W_2$ and any two
$G$-invariant symmetric non-degenerate  bilinear forms $B_i$ on
$W_i$ the Fourier transforms $\Fou_{B_i}(\xi)$ are also supported
in $\Gamma(V)$, we have $\xi = 0$.
\end{definition}

\begin{proposition} \label{SpecWeakTameAct}
Every special algebraic representation $V$ of a reductive group
$G$ is weakly linearly tame.
\end{proposition}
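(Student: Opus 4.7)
The plan is to apply the standard antisymmetrisation trick and then use specialness on the antisymmetric part. Let $\tau \colon V \to V$ be a $G$-admissible linear map with $\Sc^*(R(V))^{G(F)} \subset \Sc^*(R(V))^\tau$, and fix $\xi \in \Sc^*(Q(V))^{G(F)}$; set $\eta := \xi - \tau\xi$. Since $\tau$ normalises the $G(F)$-action, $\eta$ is $G(F)$-invariant, and since $\tau^2 \in \pi(G(F))$ by admissibility we have $\tau^2\xi = \xi$ and hence $\tau\eta = \tau\xi - \xi = -\eta$. Thus $\eta$ is simultaneously $G$-invariant and $\tau$-anti-invariant, and the desired inclusion $\xi \in \Sc^*(Q(V))^\tau$ is equivalent to $\eta = 0$.

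Restricting to the open $G$-stable subset $R(V)$, $\eta|_{R(V)}$ is $G$-invariant, hence $\tau$-invariant by hypothesis; combined with $\tau$-anti-invariance and $\operatorname{char} F = 0$ this forces $\eta|_{R(V)} = 0$, so $\Supp(\eta) \subset \Gamma(V)$. To conclude by applying the special property to $\eta$, I must verify that for every $G$-invariant decomposition $Q(V) = W_1 \oplus W_2$ and every pair of $G$-invariant non-degenerate symmetric bilinear forms $B_i$ on $W_i$, the partial Fourier transform $\Fou_{B_i}(\eta)$ is again supported in $\Gamma(V)$. The strategy is to repeat the argument above for the transformed distribution: $\Fou_{B_i}(\eta)$ is $G$-invariant because $B_i$ is $G$-invariant, so on $R(V)$ it is $\tau$-invariant by hypothesis; if I can show it is also $\tau$-anti-invariant, the same sign argument forces $\Fou_{B_i}(\eta)|_{R(V)} = 0$, and then specialness yields $\eta = 0$.

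The principal obstacle is establishing this $\tau$-anti-invariance of $\Fou_{B_i}(\eta)$ when neither the decomposition nor the form $B_i$ is a priori $\tau$-stable, since the identity $\tau_* \circ \Fou_{B_i} = \Fou_{B_i} \circ \tau_*$ genuinely requires $B_i$ to be $\tau$-invariant. The plan is to reduce to the $\tau$-stable case via two observations. First, the property $\Supp(\Fou_{B_i}(\eta)) \subset \Gamma(V)$ depends only on the decomposition and not on the particular form chosen, because any two $G$-invariant non-degenerate symmetric forms on $W_i$ differ by a $G$-equivariant automorphism of $W_i$, which preserves $\Gamma(V)$ because $G$-equivariance preserves closures of $G$-orbits through the origin.

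Second, by passing to the finer decomposition of $Q(V)$ into isotypic components for the enlarged group $\tilde G := \pi(G(F))\cdot\langle\tau\rangle$, one obtains a refinement whose summands are all $\tau$-stable, and on each such summand a $\tau$-invariant $G$-invariant non-degenerate symmetric form can be produced by averaging an arbitrary one over the order-two quotient $\tilde G / \pi(G(F))$. On these pieces the anti-invariance of the partial Fourier transform is automatic from $\tau\eta = -\eta$, and iterating the commuting partial Fourier transforms along the refinement propagates the support condition back to the original decomposition. Specialness applied to $\eta$ then concludes $\eta = 0$, which is precisely the desired inclusion $\Sc^*(Q(V))^{G(F)} \subset \Sc^*(Q(V))^\tau$.
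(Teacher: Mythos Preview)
Your antisymmetrisation set-up is correct: $\eta := \xi - \tau\xi$ is $G(F)$-invariant and $\tau$-anti-invariant, and the hypothesis on $R(V)$ forces $\Supp(\eta)\subset\Gamma(V)$. Your first observation --- that for a \emph{fixed} decomposition $Q(V)=W_1\oplus W_2$ the condition $\Supp(\Fou_{B_i}(\eta))\subset\Gamma(V)$ does not depend on the particular $G$-invariant form $B_i$ --- is also correct.

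The gap is in the second observation. The $\tilde G$-isotypic decomposition of $Q(V)$ is \emph{not} in general a refinement of an arbitrary $G$-invariant decomposition $W_1\oplus W_2$: a $G$-stable subspace need not be $\tilde G$-stable, so $W_1$ need not split as a sum of $\tilde G$-isotypic pieces. For instance, if $\tau$ interchanges two distinct $G$-isotypic components $V_\alpha$ and $V_\beta$, then $W_1=V_\alpha$ is $G$-invariant but contains no non-zero $\tau$-stable subspace, and no $\tau$-stable decomposition can refine $(V_\alpha,\;V_\beta\oplus\cdots)$. Consequently there is no ``refinement'' along which to iterate the commuting partial Fourier transforms, and the passage from $\tau$-stable decompositions back to the given $(W_1,W_2)$ is unjustified. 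A secondary issue: averaging a non-degenerate symmetric form over $\langle\tau\rangle$ can yield a degenerate (even zero) form, so even on a $\tau$-stable summand the existence of a $\tau$-invariant $G$-invariant non-degenerate form is not automatic. The paper itself defers the proof to \cite[Proposition~6.0.7]{AG_HC}; what your sketch is missing is precisely a mechanism that controls $\Fou_{W_1}(\eta)$ when $W_1$ is not $\tau$-stable.
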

For a proof see \cite[Proposition 6.0.7]{AG_HC}.

\subsubsection{Symmetric pairs} \label{SymPar}
$ $\\
In the coming 4 subsubsections we review some tools developed in \cite{AG_HC} %and \cite{AG_Reg}
that enable to prove that a symmetric pair is a Gelfand pair.
\begin{definition}
A \textbf{symmetric pair} is a triple $(G,H,\theta)$ where $H
\subset G$ are reductive groups, and $\theta$ is an involution of
$G$ such that $H = G^{\theta}$. We call a symmetric pair
\textbf{connected} if $G/H$ is connected.

For a symmetric pair $(G,H,\theta)$ we define an anti-involution
$\sigma :G \to G$ by $\sigma(g):=\theta(g^{-1})$, denote $\g:=Lie
G$, $\h := LieH$, $\gd:=\{a \in \g | \theta(a)=-a\}$. Note that
$H$ acts on $\gd$ by the adjoint action. Denote also
$G^{\sigma}:=\{g \in G| \sigma(g)=g\}$ and define a
\textbf{symmetrization map} $s:G \to G^{\sigma}$ by $s(g):=g
\sigma(g)$.

In case when the involution is obvious we will omit it.
\end{definition}

\begin{remark}
Let $(G,H,\theta)$ be a symmetric pair. Then $\g$ has a $\Z/2\Z$
grading given by $\theta$.
\end{remark}

\begin{definition}
Let $(G_1,H_1,\theta_1)$ and $(G_2,H_2,\theta_2)$ be symmetric
pairs. We define their \textbf{product} to be the symmetric pair
$(G_1 \times G_2,H_1 \times H_2,\theta_1 \times \theta_2)$.
\end{definition}

\begin{definition}
We call a symmetric pair $(G,H,\theta)$ \textbf{good} if for any
closed $H(F) \times H(F)$ orbit $O \subset G(F)$, we have
$\sigma(O)=O$.
\end{definition}

\begin{proposition} \label{GoodCrit}
Every connected symmetric pair over $\C$ is good.
\end{proposition}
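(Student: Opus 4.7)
The plan is to reduce the statement ``$\sigma(O)=O$'' to the existence of a $\sigma$-fixed representative in $O$, and then to produce such a representative using the Cartan (KAK) decomposition for complex symmetric spaces. If $O = HgH$ contains some $a$ with $\sigma(a)=a$, then, using $\sigma(H)=H$,
\[
\sigma(O)=\sigma(H)\,\sigma(a)\,\sigma(H)=HaH=O,
\]
so it is enough to find $a\in O$ with $\sigma(a)=a$.

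To organize the search, I would first pass to the symmetric variety via the symmetrization map $s\colon G\to G^{\sigma}$, $s(g)=g\sigma(g)=g\theta(g)^{-1}$. A direct computation shows $s(h_1 g h_2)=h_1 s(g)h_1^{-1}$ and $s(g)=s(g')\iff g'^{-1}g\in H$, so $s$ descends to an injection
\[
\bar{s}\colon G/(H\times H)\;\hookrightarrow\;G^{\sigma}/\mathrm{Ad}(H).
\]
Under this bijection a closed $H\times H$-orbit goes to a closed $H$-conjugacy class in $s(G)\subset G^{\sigma}$, hence (since $H$ is reductive, acting on the affine variety $G^{\sigma}$) to the $H$-conjugacy class of a semisimple element of $s(G)$. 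Moreover a small computation gives $s(\sigma(g))=\sigma(g)g=g^{-1}s(g)g$, so $s(\sigma(g))$ is always $G$-conjugate to $s(g)$; thus $\sigma(O)=O$ will follow once we know that $G$-conjugate semisimple elements of $s(G)$ are already $H$-conjugate.

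Next I would invoke the algebraic Cartan decomposition for complex symmetric pairs (Richardson/Vust): there exists a $\theta$-split torus $A\subset G$ (i.e.\ $\theta(a)=a^{-1}$ for all $a\in A$) such that every closed $H\times H$-orbit on $G$ meets $A$. Equivalently, every semisimple element of $s(G)$ is $H$-conjugate to some element of $A$. Since for $a\in A$ we have $\sigma(a)=\theta(a^{-1})=a$, this produces the required $\sigma$-fixed element $a\in O$.

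The main obstacle is the algebraic KAK statement itself; I would either quote it directly from Richardson, or derive it along the following line, which keeps everything close to the setup of the paper. Given a semisimple $x\in s(G)\subset G^{\sigma}$, the centralizer $L=Z_G(x)$ is connected reductive and $\theta$-stable, and if $gxg^{-1}\in G^{\sigma}$ then $y:=g^{-1}\theta(g)\in L$ satisfies $\theta(y)=y^{-1}$. Over $\C$, the restricted symmetrization map $s_L\colon L\to L^{\sigma}$ surjects onto the identity component, so (after a harmless replacement of $g$) we can write $y=l\theta(l)^{-1}$ with $l\in L$; then $gl\in H$, hence $g\in HL$, which gives $gxg^{-1}=hxh^{-1}$ for some $h\in H$. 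Combined with the reduction above, this proves $\sigma(O)=O$.
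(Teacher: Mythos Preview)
The paper does not actually prove this proposition; it simply refers to \cite[Corollary 7.1.7]{AG_HC}. Your primary approach---reducing to the existence of a $\sigma$-fixed representative in $O$ and then invoking the algebraic Cartan (KAK) decomposition of Richardson/Vust---is the standard argument and is correct: over $\C$, every closed $H\times H$-orbit on $G$ (for a connected symmetric pair) meets a maximal $\theta$-split torus $A$, and every $a\in A$ satisfies $\sigma(a)=\theta(a^{-1})=a$, whence $\sigma(O)=O$. This is presumably also the content of the cited corollary.

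Your alternative self-contained derivation, however, has a gap at the step ``after a harmless replacement of $g$ we can write $y=l\theta(l)^{-1}$ with $l\in L$''. First, $L=Z_G(x)$ need not be connected when the derived group of $G$ is not simply connected. Second, and more to the point, replacing $g$ by $gl$ with $l\in L$ (the only replacement that leaves $gxg^{-1}$ unchanged) turns $y=g^{-1}\theta(g)$ into $l^{-1}y\,\theta(l)$; but $l^{-1}y\,\theta(l)\in s_L(L)$ if and only if $y\in s_L(L)$, since $l\cdot s_L(L)\cdot\theta(l)^{-1}=s_L(lL)=s_L(L)$. So no such replacement moves $y$ into $s_L(L)$, and there is no a priori reason for $y$ to lie in the identity component of $L^{\sigma}$. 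This component obstruction is exactly what Richardson's structure theory (via $\theta$-split tori and the little Weyl group) handles. Your plan to quote Richardson directly is therefore the right one; the direct derivation, as written, is incomplete.
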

For a proof see e.g. \cite[Corollary 7.1.7]{AG_HC}.

\begin{definition}
We say that a symmetric pair $(G,H,\theta)$ is a \textbf{GK pair}
if any $H(F) \times H(F)$-invariant distribution on $G(F)$ is
$\sigma$-invariant.
\end{definition}

\begin{remark}
Theorem \ref{DistCrit} implies that any GK pair satisfies GP2.
\end{remark}
\subsubsection{Descendants of symmetric pairs}
\begin{proposition} \label{PropDescend}
Let  $(G,H,\theta)$ be a symmetric pair. Let $g \in G(F)$ such
that $HgH$ is closed. Let $x=s(g)$. Then $x$ is a semisimple
element of $G$.
\end{proposition}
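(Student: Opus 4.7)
The plan is to use the fact that an element of a reductive group is semisimple if and only if its $G$-conjugation orbit is closed. First I would translate the hypothesis into closedness of the $H$-conjugation orbit of $x=s(g)$ inside $G^\sigma$, and then use a $\theta$-equivariant $\sl_2$-triple to derive a contradiction from the assumption that $x$ is not semisimple.

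For the first reduction, since $\theta|_H=\id$ one has $\sigma|_H=(\cdot)^{-1}$, and a direct computation gives $s(h_1 g h_2)=h_1\,s(g)\,h_1^{-1}$ for all $h_1,h_2\in H$. In particular, $s$ is equivariant between the $H\times H$-action on the source and the $H$-conjugation action on $G^\sigma$ (through the first factor). Since $s(g_1)=s(g_2)$ iff $g_2^{-1}g_1\in H$, the fibers of $s$ are right $H$-cosets, and combining these two facts yields the set-theoretic identity $HgH=s^{-1}(H\cdot x)$, where $H\cdot x$ denotes the $H$-conjugation orbit of $x$. The factored map $G/H\to G^\sigma$ is a closed immersion onto the symmetric space, so $s$ is a closed map, and closedness of $HgH$ therefore transfers to closedness of $H\cdot x$ in $G^\sigma$, hence in $G$.

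Next, I write the Jordan decomposition $x=x_s x_u$. From $\sigma(x)=x$ we get $\theta(x)=x^{-1}=x_s^{-1}x_u^{-1}$, and uniqueness of Jordan decomposition applied to $x^{-1}$ forces $\theta(x_s)=x_s^{-1}$ and $\theta(x_u)=x_u^{-1}$; in particular $x_s,x_u\in G^\sigma$, and writing $x_u=\exp(N)$ with $N$ nilpotent we get $N\in\gd$. Assume for contradiction that $N\neq 0$. By the $\theta$-equivariant Jacobson--Morozov theorem (Kostant--Rallis), I embed $N$ into an $\sl_2$-triple $(N,H_0,N^+)$ with $H_0\in\h$ and $N^+\in\gd$; since $[N,x_s]=0$, this triple sits inside $\Lie Z_G(x_s)$, so $H_0$ exponentiates to an $F$-rational cocharacter $\lambda:\mathbb{G}_m\to H\cap Z_G(x_s)$ with $\operatorname{Ad}(\lambda(t))N=t^2 N$. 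Then
\[
\lambda(t)\,x\,\lambda(t)^{-1}\;=\;x_s\exp(t^2 N)\;\xrightarrow[t\to 0]{}\;x_s,
\]
so $x_s\in\overline{H\cdot x}=H\cdot x$. But conjugation preserves the Jordan decomposition, so every element of $H\cdot x$ has the same nontrivial unipotent component as $x$, while $x_s$ is semisimple --- a contradiction. Hence $N=0$ and $x$ is semisimple.

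The main obstacle is the last step: producing an $F$-rational $\theta$-equivariant $\sl_2$-triple through the given nilpotent $N\in\gd(F)$ whose neutral element lies in $\h$, and reconciling the notion of ``closed'' in the hypothesis (typically Hausdorff over $F$) with the topology in which the one-parameter flow $\lim_{t\to 0}\lambda(t)x\lambda(t)^{-1}$ is taken. Over local fields of characteristic zero both ingredients are classical, but must be invoked carefully, e.g.\ via Kostant--Rallis together with the rationality arguments developed in \cite{AG_HC}.
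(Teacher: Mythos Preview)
The paper does not supply its own proof of this proposition; it simply refers to \cite[Proposition 7.2.1]{AG_HC}. Your argument follows the standard route (reduce to closedness of the $H$-conjugacy orbit of $x$, then contract the unipotent part via a $\theta$-adapted $\sl_2$-triple) and is essentially correct, so there is no meaningful comparison of approaches to make.

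One step needs repair. You write: ``since $[N,x_s]=0$, this triple sits inside $\Lie Z_G(x_s)$.'' This does not follow: a $\theta$-graded $\sl_2$-triple through $N$ constructed in all of $\g$ has no reason to centralise $x_s$ merely because $N$ does --- different graded triples through $N$ differ by conjugation by $(H\cap Z_G(N))^0$, which need not fix $x_s$. The correct move is to apply the graded Jacobson--Morozov theorem \emph{inside} the $\theta$-stable reductive subalgebra $\Lie Z_G(x_s)$ from the start (note that $\theta(x_s)=x_s^{-1}$ forces $\theta(Z_G(x_s))=Z_G(x_s)$, so the grading restricts and the hypothesis $N\in\gd\cap\Lie Z_G(x_s)$ is exactly what is needed). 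This produces $(N,H_0,N^+)$ with $H_0\in\h\cap\Lie Z_G(x_s)$ directly, and your contraction argument then goes through verbatim.

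A smaller point of phrasing: ``$s$ is a closed map'' is stronger than what you establish or need. The projection $G(F)\to G(F)/H(F)$ is open but not closed in general. What you actually use is that right-$H(F)$-saturated closed sets have closed image in the quotient; combined with the fact that $\bar s:G/H\hookrightarrow G$ is a closed immersion (its image is the twisted $G$-orbit of $e$, which is closed since the stabiliser $H$ is reductive), this gives closedness of $H\cdot x$ as desired.
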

For a proof see e.g. \cite[Proposition 7.2.1]{AG_HC}.
\begin{definition}
In the notations of the previous proposition we will say that the
pair $(G_x,H_x,\theta|_{G_x})$ is a \textbf{descendant} of
$(G,H,\theta)$.
\end{definition}

\subsubsection{Tame symmetric pairs}
\begin{definition}$ $\\
\begin{itemize}
\item We call a symmetric pair $(G,H,\theta)$ \textbf{tame} if the action of  $H \times$ on $G$ is tame
\item We call a symmetric pair $(G,H,\theta)$ \textbf{linearly tame} if the action of  $H$ on $\gd$ is linearly tame
\item We call a symmetric pair $(G,H,\theta)$ \textbf{weakly linearly tame} if the action of  $H$ on $\gd$ is weakly linearly tame
\item We call a symmetric pair $(G,H,\theta)$ \textbf{special} if the action of  $H$ on $\gd$ is special
\end{itemize}
\end{definition}

\begin{remark}
Evidently, any good tame symmetric pair is a GK pair.
\end{remark}

\begin{theorem} \label{LinDes}
Let $(G,H,\theta)$ be a symmetric pair. Suppose that all its
descendants (including itself)  are weakly linearly tame. Then
$(G,H,\theta)$ is tame.
\end{theorem}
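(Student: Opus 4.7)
The plan is to invoke Theorem \ref{Invol_HC} applied to the left-right translation action of $H \times H$ on $G$, given by $(h_1, h_2) \cdot g = h_1 g h_2^{-1}$. By definition, tameness of the pair $(G, H, \theta)$ is precisely tameness of this action, so it suffices to verify the local hypothesis of Theorem \ref{Invol_HC}: for every $(H \times H)$-semisimple element $g \in G(F)$, the stabilizer $(H \times H)_g$ acts weakly linearly tamely on the normal space $N^G_{(H \times H)g,\,g}$.

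The first step is to reduce to descendants using the symmetrization map $s : G \to G^\sigma$, $s(g) = g\sigma(g)$. Since $\sigma(h) = \theta(h^{-1}) = h^{-1}$ for $h \in H$, one computes directly that
\[
s(h_1 g h_2) \;=\; h_1 g h_2 \sigma(h_2)\sigma(g)\sigma(h_1) \;=\; h_1\, s(g)\, h_1^{-1},
\]
so $s$ intertwines the $H \times H$-action on $G$ with the $H$-conjugation action on $G^\sigma$. In particular, if $(H \times H)g = HgH$ is closed, Proposition \ref{PropDescend} gives that $x := s(g)$ is semisimple in $G$, so the descendant $(G_x, H_x, \theta|_{G_x})$ is a well-defined symmetric pair to which the hypothesis of the theorem applies.

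The second step is a slice computation, standard in the Luna theory of symmetric pairs: one produces $H_x$-equivariant isomorphisms
\[
(H \times H)_g \;\cong\; H_x
\quad\text{and}\quad
N^G_{(H \times H)g,\,g} \;\cong\; (\g_x)^{\sigma} \;=\; \g_x \cap \gd,
\]
with $H_x$ acting on the normal space through the adjoint representation. In the Archimedean case this follows from a Cartan-type decomposition $\g = \h \oplus \gd$ transported to a neighbourhood of $g$; in the non-Archimedean case one uses an analytic exponential-like chart compatible with $\theta$. Under these identifications, the assumption that the descendant $(G_x, H_x, \theta|_{G_x})$ is weakly linearly tame is exactly the statement that $(H \times H)_g$ acts weakly linearly tamely on $N^G_{(H \times H)g,\,g}$. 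Theorem \ref{Invol_HC} then concludes that the action of $H \times H$ on $G$ is tame, i.e.\ that $(G,H,\theta)$ is tame.

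The main obstacle is the slice identification in the second step: one must show, in a $\sigma$-equivariant and $H_x$-equivariant way, that the normal space to the two-sided orbit $HgH$ at $g$ is isomorphic to $(\g_x)^\sigma$. This is where the symmetric-pair structure genuinely enters — the Cartan decomposition relative to $\theta$ has to be combined with the standard slice for the two-sided action of $H \times H$, and the technical realization differs between the real and $p$-adic settings (in the latter, Hensel-type or analytic-exponential arguments replace the ordinary exponential map). Once this identification is in hand, the remainder of the argument is formal: apply Theorem \ref{Invol_HC} and read off the conclusion from the weakly-linearly-tame hypothesis on every descendant.
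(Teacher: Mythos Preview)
The paper does not actually prove this theorem; it simply refers to \cite[Theorem 7.3.3]{AG_HC}. Your sketch is the correct strategy and is precisely the one carried out in that reference: one applies Theorem \ref{Invol_HC} to the two-sided $H\times H$-action on $G$, and the substance of the argument is the identification of the stabilizer and normal slice at a point $g$ with closed orbit in terms of the descendant at $x=s(g)$.

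Your computation that $(H\times H)_g \cong H_x$ is right (the condition $g^{-1}hg\in H$ for $h\in H$ is equivalent, after applying $\theta$, to $h$ centralizing $x=g\theta(g)^{-1}$), and you correctly flag the slice identification $N^G_{HgH,g}\cong(\g_x)^\sigma$ as the genuine technical point. In \cite{AG_HC} this is handled uniformly using the analytic Luna slice theorem together with the $\theta$-equivariant structure, rather than by separate real/exponential and $p$-adic/Hensel arguments as you suggest; but the content is the same. One minor slip: you write the action as $h_1gh_2$ in the displayed computation of $s$, whereas it should be $h_1gh_2^{-1}$ --- the algebra still works because $h_2\sigma(h_2)=e$ either way, but it is worth cleaning up.
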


For a proof see \cite[Theorem 7.3.3]{AG_HC}.
\subsubsection{Regular symmetric pairs}
\begin{definition}
Let $(G,H,\theta)$ be a symmetric pair. We call an element $g \in
G(F)$ \textbf{admissible} if\\
(i) $Ad(g)$ commutes with $\theta$ (or, equivalently, $s(g)\in Z(G)$) and \\
(ii) $Ad(g)|_{\g^{\sigma}}$ is $H$-admissible.
\end{definition}

\begin{definition}
We call a symmetric pair $(G,H,\theta)$ \textbf{regular} if for
any admissible $g \in G(F)$ such that every $H(F)$-invariant
distribution on $R_{G,H}$ is also $Ad(g)$-invariant,
we have\\
(*) every $H(F)$-invariant distribution on $Q(\gd)$ is also
$Ad(g)$-invariant.
\end{definition}

The following two propositions are evident.
\begin{proposition} \label{TrivReg}
Let $(G,H,\theta)$ be symmetric pair. Suppose that any $g \in
G(F)$ satisfying $\sigma(g)g \in Z(G(F))$ lies in $Z(G(F))H(F)$.
Then $(G,H,\theta)$ is regular. In particular if the normalizer of
$H(F)$ lies inside $Z(G(F))H(F)$ then $(G,H,\theta)$ is regular.
\end{proposition}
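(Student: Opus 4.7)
The plan is to show that, under the hypothesis, the implication in the definition of regularity holds vacuously: every admissible $g \in G(F)$ already acts trivially on the space of $H(F)$-invariant distributions on $Q(\gd)$, so no appeal to the condition on $R_{G,H}$ is needed.

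First I would unwind admissibility. Condition (i) gives $s(g) = g\sigma(g) \in Z(G)$, and since this element is central one has
\[
\sigma(g)g \;=\; g^{-1}\bigl(g\sigma(g)\bigr)g \;=\; g\sigma(g) \;\in\; Z(G(F)),
\]
so the hypothesis of the proposition applies and we may write $g = zh$ with $z \in Z(G(F))$ and $h \in H(F)$. Because the adjoint action of the center is trivial, $Ad(g)$ agrees with $Ad(h)$ on $\g$, and hence on $\gd$ and on $Q(\gd)$. Since $h \in H(F)$, every $H(F)$-invariant distribution on $Q(\gd)$ is automatically $Ad(g)$-invariant, which is exactly what regularity demands.

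For the ``in particular'' statement I would first show that the centrality condition $\sigma(g)g \in Z(G(F))$ already forces $g \in N_G(H)(F)$, so the normalizer hypothesis reduces this case to the first. Setting $c := g\sigma(g) = \sigma(g)g$, one gets $\theta(g) = gc^{-1}$, and for $h \in H(F)$ the short computation $\theta(ghg^{-1}) = \theta(g)\,h\,\theta(g)^{-1} = gc^{-1}hcg^{-1} = ghg^{-1}$ (using $\theta(h)=h$ and centrality of $c$) shows $gHg^{-1}\subset H$; the reverse inclusion is symmetric, whence $g \in N_G(H)(F)$.

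There is essentially no obstacle; the argument is bookkeeping once one observes that modulo the center the action of an admissible $g$ on $\gd$ comes from $H(F)$. The only point that requires a moment's thought is the symmetry $\sigma(g)g = g\sigma(g)$ when this element is central, which is precisely what lets one invoke the hypothesis of the proposition for admissible $g$.
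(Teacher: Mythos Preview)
Your argument is correct and is precisely the ``evident'' unwinding the paper has in mind: the paper states this proposition without proof, prefacing it with ``The following two propositions are evident.'' Your observation that $s(g)\in Z(G)$ forces $\sigma(g)g = g^{-1}s(g)g = s(g)\in Z(G(F))$, together with the fact that $Ad(zh)=Ad(h)$ for $z$ central, is exactly what makes the regularity condition hold trivially; the normalizer computation via $\theta(g)=gc^{-1}$ is likewise the intended reduction.
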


\begin{proposition} $ $\\
(i) Any weakly linearly tame pair is regular. \\
(ii) A product of regular pairs is regular (see \cite[Proposition 7.4.4]{AG_HC}).
\end{proposition}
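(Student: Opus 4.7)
The plan is to unwind the definitions directly. For part (i), assume $(G,H,\theta)$ is weakly linearly tame, i.e.\ the $H$-action on $\gd$ is weakly linearly tame. Let $g \in G(F)$ be any admissible element such that every $H(F)$-invariant distribution on $R_{G,H}$ is $Ad(g)$-invariant. I must produce the same conclusion with $R_{G,H}$ replaced by $Q(\gd)$.

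The key observation is that, by condition (ii) in the definition of admissible $g$, the linear map $\tau := Ad(g)|_{\gd}$ is automatically $H$-admissible. Under the standard linearization of the symmetric pair (whereby, after a Harish-Chandra-type slice/Cayley reduction near semisimple elements of $G^{\sigma}$, distributions on the relevant part of $G$ correspond to distributions on $\gd$), the hypothesis that $H(F)$-invariant distributions on $R_{G,H}$ are $Ad(g)$-invariant translates into the inclusion $\Sc^*(R(\gd))^{H(F)} \subset \Sc^*(R(\gd))^{\tau}$. Invoking the defining property of a weakly linearly tame action with this $\tau$ then immediately gives $\Sc^*(Q(\gd))^{H(F)} \subset \Sc^*(Q(\gd))^{\tau}$, which is exactly the regularity condition for $g$. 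Since $g$ was an arbitrary admissible element satisfying the hypothesis, $(G,H,\theta)$ is regular.

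Part (ii) is simply a reference to \cite[Proposition 7.4.4]{AG_HC}: the product of two regular pairs is regular because an element $g=(g_1,g_2)$ admissible for the product pair forces each $g_i$ to be admissible for $(G_i,H_i,\theta_i)$ (using $Z(G_1 \times G_2)=Z(G_1)\times Z(G_2)$ in condition (i), and the factorization of $H$-admissibility on $\gd_1 \oplus \gd_2$ in condition (ii)); one then separates the distributions on $Q(\gd_1)\times Q(\gd_2)$ one factor at a time via an invariance argument.

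The only mild obstacle is bookkeeping: matching the set $R_{G,H}$, which is defined on the group/symmetric-space side, with the linear set $R(\gd) \subset Q(\gd)$ that appears in the definition of weakly linearly tame. Once this correspondence is invoked, both statements are immediate from the definitions and require no genuinely new input, which is why the authors declare the propositions to be evident.
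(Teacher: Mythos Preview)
Your argument is correct in substance and matches the paper's intent: the paper gives no proof at all, simply declaring both statements ``evident,'' and your definition-unwinding is exactly the verification the authors have in mind.

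One clarification will sharpen it. You treat $R_{G,H}$ as an object on the group side that must be linearized via a Harish-Chandra/Cayley-type reduction before it can be compared with $R(\gd)$. In fact, in this paper (and in \cite{AG_HC}) the symbol $R_{G,H}$ is nothing more than shorthand for $R(\gd)=Q(\gd)\setminus\Gamma(\gd)$; the entire definition of \emph{regular} already lives on the linear space $\gd$. So the ``translation'' step you flag as the only obstacle is not a step at all: once you observe that an admissible $g$ gives an $H$-admissible linear map $\tau=Ad(g)|_{\gd}$, the hypothesis of regularity is literally the hypothesis in the definition of weakly linearly tame, and the conclusion is literally the conclusion. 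No descent or slice argument enters; this is why the authors call it evident. Your sketch for (ii) via the cited proposition is fine.
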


The importance of the notion of regular pair is demonstrated by
the following theorem.

\begin{theorem} \label{GoodHerRegGK}
Let $(G,H,\theta)$ be a good symmetric pair such that all its
descendants (including itself) are regular. Then it is a GK pair.
\end{theorem}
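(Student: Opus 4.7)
The plan is to proceed by induction on $\dim G$, combining Harish--Chandra descent with Luna's slice theorem and the hypotheses of goodness and regularity. Let $\xi \in \Sc^*(G(F))^{H(F) \times H(F)}$, and set $\eta := \xi - \sigma(\xi)$, which is again $H \times H$-invariant but satisfies $\sigma(\eta) = -\eta$; we want to show $\eta = 0$. First I would argue, by exhausting $G(F)$ via the stratification by $H \times H$-orbits (using Proposition \ref{strat} or its Archimedean analog \ref{Arch_strat}), that it suffices to show $\eta$ vanishes in a neighborhood of every closed $H \times H$-orbit. Since the pair is good, each closed orbit $O$ is $\sigma$-stable, so the local problem is well posed.

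For a closed orbit through $g \in G(F)$, Proposition \ref{PropDescend} says $x := s(g)$ is semisimple, and the descendant $(G_x, H_x, \theta|_{G_x})$ controls the local picture. Via a Luna-type analytic slice and the linearization of the symmetric variety near $x$, the local vanishing of $\eta$ near $O$ translates into a statement about $H_x$-invariant distributions on $\gd_x$ that transform in a prescribed anti-invariant way under the involution induced by $\sigma$ and $Ad(g)$. If $\dim G_x < \dim G$, the descendant is a proper good symmetric pair whose own descendants are all descendants of $(G,H,\theta)$ and hence regular by hypothesis; thus by the induction hypothesis the descendant is a GK pair, which yields the required vanishing near $O$.

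The remaining case is $G_x = G$, forcing $s(g) \in Z(G)$, so $g$ is admissible in the sense of the definition preceding Proposition \ref{TrivReg}. Here we need to show that every $H(F)$-invariant distribution on $Q(\gd)$ is $Ad(g)$-invariant. This is precisely where the regularity hypothesis on $(G,H,\theta)$ itself enters: it is enough to verify $Ad(g)$-invariance on $R_{G,H}$, because distributions on $R_{G,H}$ correspond, via another pass of descent, to data on strictly smaller descendants that have already been handled in the previous paragraph. Regularity then upgrades the conclusion from $R_{G,H}$ to all of $Q(\gd)$, concluding the induction step. Finally, applying Theorem \ref{DistCrit} yields GP2 as a bonus.

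The main obstacle will be the bookkeeping of the descent step: making the Luna slice work uniformly across Archimedean and non-Archimedean fields, tracking how $\sigma$ transports through the slice to an $H_x$-admissible involution on $\gd_x$, and verifying that descendants of descendants are descendants of the original pair (so the inductive hypothesis really does apply). The conceptual heart is clean — regularity is by design exactly the bootstrap that closes the gap between the generic locus $R_{G,H}$ and the full $Q(\gd)$ — but the verification that ``every $H$-invariant distribution on $R_{G,H}$ is $Ad(g)$-invariant'' is genuinely inputable from the inductive step requires a careful matching of strata.
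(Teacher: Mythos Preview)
The paper does not actually prove this theorem; it simply cites \cite[Theorem~7.4.5]{AG_HC}. So there is no ``paper's own proof'' to compare against beyond the strategy of that reference, which is indeed generalized Harish--Chandra descent combined with the regularity hypothesis, as you outline.

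Your sketch captures the right ingredients, but the induction is set up on the wrong statement, and this creates a genuine gap. You induct on the GK property itself (a group-level statement), and to invoke the induction hypothesis on a proper descendant $(G_x,H_x,\theta|_{G_x})$ you assert that this descendant is again a \emph{good} symmetric pair. Goodness of descendants is neither among the hypotheses nor obvious; only regularity of all descendants is assumed. Moreover, even granting that the descendant is GK, this is a statement about bi-$H_x$-invariant distributions on $G_x(F)$, whereas what the Luna slice hands you is a problem about $H_x$-invariant distributions on (an open piece of) $Q(\gd_x)$; the passage between the two is another nontrivial descent that you have not accounted for.

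The proof in \cite{AG_HC} avoids both issues by structuring the induction on a Lie-algebra statement rather than on GK: one proves, by induction on $\dim G$, that for every admissible $g$ the $H(F)$-invariant distributions on $Q(\gd)$ are $Ad(g)$-invariant. Regularity reduces this to the same assertion on $R_{G,H}$, and Luna/Harish--Chandra descent on $R_{G,H}$ lands you in $Q(\gd_x)$ for \emph{strictly smaller} descendants, closing the induction without ever needing those descendants to be good. The goodness hypothesis on the original pair is used exactly once, at the top, to pass from the group $G$ to the linear problem on $\gd$ (ensuring $\sigma$ preserves each closed $H\times H$-orbit so that the slice problem is well posed). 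Restructuring your induction this way fixes the gap; the rest of your outline is sound.
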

For a proof see \cite[Theorem 7.4.5]{AG_HC}.
\subsubsection{Defects of symmetric pairs} \label{def}
$ $\\
In this subsection we review some tools developed in \cite{AG_HC} and \cite{AG_Reg}
that enable to prove that a symmetric pair is special.

\begin{definition}
We fix standard basis $e,h,f$ of $sl_2(F)$. We fix a grading on
$sl_2(F)$ given by $h \in sl_2(F)_0$ and $e,f \in sl_2(F)_1$. A
\textbf{graded representation of $sl_2$} is a representation of
$sl_2$ on a graded vector space $V=V_0 \oplus V_1$ such that
$sl_2(F)_i(V_j) \subset V_{i+j}$ where $i,j \in \Z/2\Z$.
\end{definition}

The following lemma is standard.
\begin{lemma}$ $\\
(i) Every graded representation of $sl_2$ which is
irreducible as a graded representation is irreducible just as a representation.\\
(ii) Every irreducible representation $V$ of $sl_2$ admits exactly
two gradings. In one highest weight vector lies in $V_0$ and in
the other in $V_1$.
\end{lemma}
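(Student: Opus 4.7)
The plan is to exploit the parity involution $T \colon V \to V$ defined by $T|_{V_i}=(-1)^i$. A direct computation from the grading convention gives $[T,h]=0$ and $Te=-eT$, $Tf=-fT$. Consequently, if $W \subset V$ is any $sl_2$-subrepresentation, then $T(W)$ is also an $sl_2$-subrepresentation, and hence so are $W+T(W)$ and $W \cap T(W)$; since $T$ is an involution in characteristic zero, a subspace is $T$-stable if and only if it is graded, so $W+T(W)$ and $W \cap T(W)$ are graded subrepresentations. This is the principal tool for both parts.

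For part (i), I would first decompose $V=\bigoplus_n V[n]$ into $sl_2$-isotypic components. Since $T$ preserves the $h$-weight decomposition and sends highest weight vectors to highest weight vectors (as $eTv=-Tev=0$ whenever $ev=0$), each $V[n]$ is $T$-stable and hence graded, and therefore a graded subrepresentation. Graded irreducibility of $V$ forces $V=V[n]$ for a single $n$. Now write $V[n]\cong L \otimes M$ where $L$ denotes the irreducible $sl_2$-representation of highest weight $n$ and $M=V^n$ is the space of highest weight vectors; the grading on $V$ induces a grading $M=M_0\oplus M_1$. For any homogeneous $v \in M$, the cyclic $sl_2$-submodule it generates is isomorphic to $L$ and has its highest weight vector in a fixed grade, so it is a graded subrepresentation; choosing a homogeneous basis of $M$ exhibits $V[n]$ as a direct sum of such graded copies of $L$. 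Graded irreducibility then forces $\dim M = 1$, so $V\cong L$ is irreducible.

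For part (ii), given an irreducible $sl_2$-representation $V$, I observe that specifying a grading compatible with the $sl_2$-action is the same as specifying an involution $T$ satisfying the commutation/anticommutation relations above. Since $[T,h]=0$ and the weight spaces of $V$ are one-dimensional, $T$ must act by a scalar $\pm 1$ on each weight space; the relation $Tf=-fT$, combined with the fact that $f$ maps each non-lowest weight space isomorphically onto the next, forces consecutive weight spaces to carry opposite $T$-eigenvalues. Hence $T$ is completely determined by its value on the highest weight vector, yielding exactly two gradings, distinguished by whether that vector lies in $V_0$ or $V_1$.

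The main subtlety lies in part (i), in passing from the induced grading on the multiplicity space $M$ to a decomposition of $V[n]$ into graded cyclic subrepresentations; this requires both that a homogeneous basis of $M$ exists (immediate from $M=M_0\oplus M_1$) and that the cyclic span of such a basis vector under $f$ lies in alternating grades (immediate from $f \in sl_2(F)_1$). Beyond this point the argument reduces to standard finite-dimensional $sl_2$-theory over a field of characteristic zero.
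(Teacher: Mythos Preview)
Your proof is correct. The paper does not actually prove this lemma; it simply declares it ``standard'' and proceeds. Your argument via the parity involution $T$ (with $[T,h]=0$ and $T$ anticommuting with $e,f$) is a clean and self-contained way to handle both parts: in (i) it makes each isotypic component a graded subrepresentation and then decomposes the multiplicity space into homogeneous lines, while in (ii) it reduces the classification of gradings to the choice of sign on the highest weight line, using that weight spaces are one-dimensional and $f$ shifts them isomorphically. There is nothing to compare against in the paper, but your write-up would serve well as the omitted proof.
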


\begin{definition}
We denote by $V_{\lambda}^{w}$ the irreducible graded
representation of $sl_2$ with highest weight $\lambda$ and highest
weight vector of parity $p$ where $w = (-1)^p$.
\end{definition}

The following lemma is straightforward.

\begin{lemma} \label{Graded_star}
$(V_{\lambda}^w)^* = V_{\lambda}^{w(-1)^{\lambda}}$.
\end{lemma}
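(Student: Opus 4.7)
The plan is to trace through the parities of the basis vectors of $V_\lambda^w$ and identify the parity of the highest weight vector of its dual.

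First I would fix notation: let $p \in \{0,1\}$ with $w = (-1)^p$, and let $v \in (V_\lambda^w)_p$ be a highest weight vector. Then $v, fv, f^2 v, \ldots, f^\lambda v$ is a basis of $V_\lambda^w$, and since $f \in sl_2(F)_1$, the vector $f^k v$ has parity $p+k \pmod 2$. In particular the lowest weight vector $f^\lambda v$ has parity $p + \lambda \pmod 2$.

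Next I would use the fact that $V_\lambda$ is irreducible and self-dual as an ungraded representation of $sl_2$, so $(V_\lambda^w)^*$ is an irreducible graded representation with highest weight $\lambda$, i.e.\ of the form $V_\lambda^{w'}$ for some sign $w'$. To identify $w'$, I would recall that under the coadjoint action $(X \cdot \phi)(u) := -\phi(X \cdot u)$, the weights on $(V_\lambda^w)^*$ are the negatives of the weights on $V_\lambda^w$; hence the highest weight vector of the dual is the linear functional dual to the lowest weight vector $f^\lambda v$. Since the grading on the dual of a graded vector space places the dual basis vector of a vector of parity $q$ again in parity $q$, this highest weight functional lies in parity $p + \lambda \pmod 2$, so $w' = (-1)^{p+\lambda} = w(-1)^\lambda$.

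This is essentially a bookkeeping argument; there is no real obstacle beyond keeping the sign conventions consistent. The only point that deserves a moment's care is the claim that grading on the dual agrees with grading on the original basis — a quick appeal to the standard definition of a graded dual (namely $(V^*)_i := (V_i)^*$) suffices, and then the computation above gives $(V_\lambda^w)^* \cong V_\lambda^{w(-1)^\lambda}$ as claimed.
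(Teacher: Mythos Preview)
Your argument is correct and is precisely the kind of bookkeeping the paper has in mind; the paper itself gives no proof, simply labeling the lemma as ``straightforward.'' Your identification of the highest weight vector of the dual with the functional dual to the lowest weight vector $f^\lambda v$, together with the observation that $f^\lambda v$ has parity $p+\lambda$, is exactly the computation needed.
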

\begin{definition}
Let $\pi$ be a graded representation of $sl_2$. We define the
\textbf{defect} of $\pi$ to be %some people say deficiency
$$\de(\pi)=\tr(h|_{(\pi^e)_0})-\dim(\pi_1).$$
\end{definition}
The following lemma is straightforward

\begin{lemma} \label{Defects}
\begin{align}
&\de(\pi \oplus \tau)=\de(\pi) + \de(\tau)\\
&\de(V_{\lambda}^w) =\frac{1}{2}(\lambda w + w( \frac{1
+(-1)^{\lambda}}{2})-1)= \frac{1}{2} \left\{%
\begin{array}{ll}
   \lambda w + w-1 & \lambda \text{ is even} \\
    \lambda w-1 & \lambda \text{ is odd}
\end{array}%
\right.
\end{align}

\end{lemma}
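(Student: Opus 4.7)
The plan is to verify both identities by directly computing on a basis of weight vectors, using the standard fact that $V_{\lambda}^{w}$ has weights $\lambda, \lambda-2, \ldots, -\lambda$, each with one-dimensional weight space obtained by successive applications of $f$ to a highest weight vector $v_0$ of parity $p$ with $w=(-1)^p$. Since $f \in sl_2(F)_1$, the vector $f^k v_0$ has parity $p+k \pmod 2$, which determines how the basis splits between $(V_\lambda^w)_0$ and $(V_\lambda^w)_1$.

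First I would dispose of the additivity statement. Both $\pi \mapsto (\pi^e)_0$ (kernel of $e$ intersected with the degree-$0$ part) and $\pi \mapsto \pi_1$ commute with direct sums of graded $sl_2$-representations, and trace and dimension are additive, so $\de(\pi \oplus \tau) = \de(\pi)+\de(\tau)$ is immediate from the definition.

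For part (ii), I would compute the two contributions to $\de(V_\lambda^w)$ separately. The $e$-invariants $(V_\lambda^w)^e$ are one-dimensional, spanned by $v_0$, on which $h$ acts by $\lambda$. Hence $(V_\lambda^w)^e)_0$ is the whole line if $p=0$ (i.e.\ $w=1$) and is zero if $p=1$ (i.e.\ $w=-1$), giving
\[
\tr\bigl(h|_{(V_\lambda^w)^e)_0}\bigr)=\lambda\cdot\tfrac{1+w}{2}.
\]
For the dimension of $(V_\lambda^w)_1$, I would count the indices $k\in\{0,1,\ldots,\lambda\}$ for which $f^k v_0$ lies in degree $1$, i.e., those $k$ with $k+p$ odd. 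This splits into four cases depending on the parities of $\lambda$ and of $p$, and an elementary count gives $\dim(V_\lambda^w)_1$ in each case.

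Finally I would combine the two computations in the same four cases (the parity of $\lambda$ combined with $w=\pm 1$) and check that in every case the value of $\tr(h|_{(V_\lambda^w)^e)_0}) - \dim(V_\lambda^w)_1$ equals $\tfrac12(\lambda w + w(1+(-1)^\lambda)/2 - 1)$, confirming the piecewise formula for $\lambda$ even and $\lambda$ odd. There is no genuine obstacle here; the only point to be careful about is bookkeeping of parities in the case analysis, especially not to conflate the parity of the highest weight vector with the parity of the lowest-weight end of the chain.
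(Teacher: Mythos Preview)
Your proposal is correct and is exactly the kind of direct weight-vector computation one expects; the paper itself does not give a proof, declaring the lemma ``straightforward,'' so your approach is the natural way to fill in the details. The four-case bookkeeping you outline checks out against the claimed formula in every case.
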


\begin{lemma}
Let $\g$ be a $(\Z/2\Z)$ graded Lie algebra. Let $x \in \g_1$. Then there exists a graded homomorphsm $\pi_x: sl_2 \to \g$ such that $\pi_x(e)=x.$
\end{lemma}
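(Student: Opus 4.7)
The plan is to reduce to the ungraded Jacobson--Morozov theorem and then upgrade the resulting $sl_2$-triple to a graded one by a symmetrization argument based on the involution defining the grading. First I observe that no such $\pi_x$ can exist unless $x$ is ad-nilpotent (since $e$ is nilpotent in $sl_2$); this nilpotency is implicit in the applications to come and I will assume it. Applying the classical Jacobson--Morozov theorem to $\g$ regarded as an ungraded Lie algebra yields \emph{some} $sl_2$-triple $(x,h,f)$ in $\g$, without yet any control on how $h$ and $f$ sit with respect to the grading.

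Next I exploit the involution $\theta\colon \g\to\g$ defined by $\theta|_{\g_i}=(-1)^i$. Since $x\in\g_1$ we have $\theta(x)=-x$, so $(-x,\theta(h),\theta(f))$ is another $sl_2$-triple. Composing with the involutive automorphism $\tau$ of $sl_2$ given by $\tau(e)=-e$, $\tau(h)=h$, $\tau(f)=-f$, we obtain a second $sl_2$-triple $(x,\theta(h),-\theta(f))$ sharing the same nilpositive element $x$. A graded homomorphism $\pi_x$ is, by our grading convention on $sl_2$, exactly one satisfying $\theta\circ\pi_x=\pi_x\circ\tau$, so our task becomes to find an $sl_2$-triple extending $x$ that is fixed by the involution $(x,h,f)\mapsto(x,\theta(h),-\theta(f))$ on the set of all such triples.

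To produce such a fixed triple I would invoke the Malcev uniqueness theorem: the set of $sl_2$-triples extending the fixed nilpotent $x$ is a torsor under a connected unipotent subgroup $U_x$ of the centralizer $G^x$ (its Lie algebra being the positive part of $\g^x$ under $\mathrm{ad}(h)$). The involution $(x,h,f)\mapsto(x,\theta(h),-\theta(f))$ descends to an action of $\Z/2\Z$ on this $U_x$-torsor, compatible with the natural $\theta$-action on $U_x$ itself. Because we are in characteristic zero and $U_x$ is unipotent (hence $\Z/2\Z$-cohomologically trivial, as one sees by exponentiating an averaging argument on its Lie algebra), this torsor admits a $\Z/2\Z$-fixed point. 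Any such fixed triple $(x,h',f')$ satisfies $h'\in\g_0$ and $f'\in\g_1$, and then $\pi_x\colon sl_2\to\g$ defined by $e\mapsto x$, $h\mapsto h'$, $f\mapsto f'$ is the desired graded homomorphism.

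The main obstacle I anticipate is this last step, namely showing that the involution on the $U_x$-torsor of completions actually has a fixed point; this is the content of the Kostant--Rallis refinement of Jacobson--Morozov to the $\Z/2\Z$-graded setting, and the only nontrivial input beyond classical Jacobson--Morozov. The remainder of the argument is essentially bookkeeping with $sl_2$-triples and the involution $\tau$.
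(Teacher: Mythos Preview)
Your argument is correct and is essentially the standard Kostant--Rallis proof of the graded Jacobson--Morozov theorem. The paper itself does not prove this lemma; it simply cites \cite[Lemma 7.1.11]{AG_HC}, so there is no in-paper argument to compare against. Your identification of the key point---that the $\Z/2\Z$-involution acts on the $U_x$-torsor of completions and has a fixed point because $H^1(\Z/2\Z,U_x)=0$ for unipotent $U_x$ in characteristic zero---is exactly right, and your caveat about the implicit nilpotency hypothesis is well taken (the lemma as stated is literally false otherwise, and the paper tacitly assumes it since the only use is for $x\in\Gamma(\gd)$).

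A slightly more hands-on variant, which you may find in some references and which avoids the torsor language, goes as follows: starting from an ungraded triple $(x,h,f)$, project $h=h_0+h_1$ and observe that $[h_0,x]=2x$ and $h_0=[x,f_1]\in\mathrm{im}(\mathrm{ad}\,x)$; Morozov's completion lemma then produces some $f'$ with $(x,h_0,f')$ an $sl_2$-triple, and projecting $f'$ to $\g_1$ already gives a graded triple, since the relations $[x,f']=h_0$ and $[h_0,f']=-2f'$ are homogeneous. This bypasses the cohomological fixed-point step, but your approach is equally valid and arguably more conceptual.
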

For a proof see e.g. \cite[Lemma 7.1.11]{AG_HC}.
\begin{remark}
It is easy to see that $\pi_x$ is uniquely defined up to the
exponentiated adjoint action of $(\g_0)_x$.
\end{remark}

\begin{definition}
Let $\g$ be a $(\Z/2\Z)$ graded Lie algebra. Let $x \in \g_1$. We define the defect of $x$ to be the defect of $\g$ considered as a representation of $sl_2$ via $\pi_x$. Clearly it  does not depends on the choice of $\pi_x$
\end{definition}

\begin{lemma} \label{BilForm}
Let $(G,H,\theta)$ be a symmetric pair. Then there exists a
$G$-invariant $\theta$-invariant non-degenerate symmetric bilinear
form $B$ on $\g$. In particular, $B|_{\h}$ and $B|_{\g^{\sigma}}$
are also non-degenerate and $\h$ is orthogonal to $\g^{\sigma}$.
\end{lemma}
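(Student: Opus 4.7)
The plan is to build $B$ piece by piece using the reductive decomposition $\g = \z(\g) \oplus [\g,\g]$. Since $\theta$ is an automorphism of $\g$, both summands are $\theta$-stable (the center and the derived subalgebra are characteristic), and both are $G$-stable as well. Any $G$-invariant symmetric bilinear form on a reductive Lie algebra is automatically orthogonal with respect to this decomposition (a one-line consequence of the associativity identity $B([a,x],b) + B(x,[a,b]) = 0$ applied to $x\in\z(\g)$), so constructing $B$ separately on the center and on the derived subalgebra will suffice.

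On the semisimple part $\g_{ss} := [\g,\g]$ I would simply take the Killing form $K$. It is $\Aut(\g_{ss})$-invariant, hence both $G$-invariant and $\theta$-invariant, and it is non-degenerate because $\g_{ss}$ is semisimple. On the center $\z := \z(\g)$ the Killing form vanishes, so I replace it: $\theta$ restricts to an involution on $\z$, giving an eigenspace decomposition $\z = \z^+ \oplus \z^-$; pick any non-degenerate symmetric bilinear forms on $\z^+$ and $\z^-$ and let $B_\z$ be their direct sum. This $B_\z$ is non-degenerate, $\theta$-invariant by construction, and $G$-invariant because $G$ acts trivially on the center.

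Setting $B := B_\z \oplus K$ then gives a $G$-invariant, $\theta$-invariant, symmetric, non-degenerate bilinear form on all of $\g$. The final assertion follows by a short computation: for $x\in\h$ and $y\in\g^\sigma$ one has $\theta x = x$ and $\theta y = -y$, so $\theta$-invariance of $B$ yields $B(x,y) = B(\theta x,\theta y) = -B(x,y)$, whence $B(x,y)=0$ and $\h \perp \g^\sigma$. Since $\g = \h \oplus \g^\sigma$ is then an orthogonal decomposition with respect to a non-degenerate form, the restrictions $B|_\h$ and $B|_{\g^\sigma}$ are both non-degenerate.

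There is no real obstacle here; the only subtlety is remembering that the Killing form alone is insufficient (it degenerates on the center) and has to be supplemented by an arbitrary $\theta$-invariant non-degenerate form on $\z(\g)$. An alternative would be the averaging trick $B(x,y) := B_0(x,y) + B_0(\theta x,\theta y)$ starting from any $G$-invariant non-degenerate $B_0$, but then one still has to argue non-degeneracy of the averaged form on each $\theta$-eigenspace, which brings one back to essentially the same analysis.
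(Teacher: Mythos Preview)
The paper does not give its own proof of this lemma, deferring instead to \cite[Lemma~7.1.9]{AG_HC}. Your argument is the standard one and is essentially correct, with one caveat.

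The claim that $B_\z$ is ``$G$-invariant because $G$ acts trivially on the center'' holds only when $G$ is connected. In general $G$ acts on $\z(\g)$ through the finite group $G/G^0$, and this action can be non-trivial: for example $O_2$ acts by $\pm 1$ on its one-dimensional Lie algebra. Since the paper's definition of symmetric pair does not require $G$ to be connected, this is a small but genuine gap. A clean fix is to replace your arbitrary $B_\z$ by the trace form of a faithful algebraic representation $\rho$ of $G$, restricted to $\z(\g)$. This is $G$-invariant by construction, and it is non-degenerate because the weights of $\rho$ on the central torus $Z(G^0)^0$ lie in the character lattice, so the resulting Gram matrix is a positive-definite integer matrix and hence invertible over any field of characteristic zero. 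To obtain $\theta$-invariance as well, use $\rho\oplus(\rho\circ\theta)$ in place of $\rho$; this is still faithful, so its trace form on $\z(\g)$ remains non-degenerate, $G$-invariant, and $\theta$-invariant. The remainder of your argument---orthogonality of $\h$ and $\g^\sigma$ from $\theta$-invariance, and non-degeneracy of the restrictions from the orthogonal direct-sum decomposition---is correct as written.
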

For a proof see e.g. \cite[Lemma 7.1.9]{AG_HC}.

From now on we will fix such $B$ and identify $\gd$ with $(\gd)^*.$

\begin{proposition}\label{neg_def_no_dist}
Let $(G,H,\theta)$ be a symmetric pair. Let $\xi \in \Sc^*(Q(\gd))$. Suppose that both  $\xi$ and $\Fou(\xi)$ are supported on  $\Gamma(\gd)$. Then the set of elements in $\Supp(\xi)$ which have non-negative defect is dense in $\Supp(\xi)$
\end{proposition}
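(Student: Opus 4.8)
The plan is to reduce this to Corollary \ref{non_disting_no_dists} (equivalently, Proposition \ref{non_disting_no_cois}), which tells us that the distinguished elements are dense in $\Supp(\xi)$, and then to show that every distinguished element of $\gd$ has non-negative defect. So the statement will follow once we prove the pointwise assertion: if $x \in \Gamma(\gd)$ is $H$-distinguished, then $\de(x) \geq 0$. First I would set up the $sl_2$-triple: since $\gd = \g_1$ in the $\Z/2\Z$-grading attached to $\theta$ and $x$ is nilpotent (being in $\Gamma(\gd)$), by the graded Jacobson--Morozov lemma quoted above there is a graded homomorphism $\pi_x : sl_2 \to \g$ with $\pi_x(e) = x$, and $\de(x)$ is the defect of $\g$ viewed as a graded $sl_2$-representation via $\pi_x$. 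The key geometric input is the identification, via the $G$-invariant form $B$ of Lemma \ref{BilForm}, of the conormal space $CN_{Hx,x}^{Q(\gd)}$ with a concrete subspace of $\gd = (\gd)^*$: namely $CN_{Hx,x}^{Q(\gd)}$ corresponds to the orthogonal complement of $\h x = [\h,x]$ inside $\gd$ (intersected appropriately with the copy of $Q(\gd)$), which by $sl_2$-representation theory is $\g^f \cap \gd = (\g^f)_1$ translated by $x$, i.e. the affine Slodowy-type slice $x + (\g^f)_1$. Distinguishedness of $x$ means this conormal space lies in $\Gamma((\gd)^*) = \Gamma(\gd)$, i.e. every element of $x + (\g^f)_1$ is nilpotent.

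The main step is then to translate "every element of $x + (\g^f)_1$ is nilpotent" into "$\de(x) \geq 0$". Here I would decompose $\g = \bigoplus_\lambda V_\lambda$ into irreducible graded $sl_2$-subrepresentations (using Lemma \ref{Graded_star} to track the grading of duals), so that $\de(x) = \sum_\lambda \de(V_\lambda)$ with the closed-form values from Lemma \ref{Defects}. The quantity $\dim(\pi_1)$ counts $\dim \gd$, and $\tr(h|_{(\g^e)_0})$ is computed from the highest weights. The condition that $x + (\g^f)_1$ consists of nilpotents should force a balance between the number of "bad" summands (those contributing negatively to the defect) and "good" ones; concretely, on a summand $V_\lambda$ with $\lambda$ odd (highest weight vector of the wrong parity) $\de$ can be negative, and the nilpotency of the whole slice constrains how many such summands may occur. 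The cleanest route is probably: if $\de(x) < 0$, exhibit an element $y \in (\g^f)_1$ such that $x + y$ is semisimple (or at least not nilpotent) — for instance by producing a Cartan subspace of the "centralizer slice" from the negative-defect summands, which carries a non-nilpotent element — contradicting distinguishedness. I expect this construction of a non-nilpotent element in the slice from a defect computation to be the heart of the argument and the main obstacle.

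There is one technical wrinkle I would address along the way: Proposition \ref{non_disting_no_cois} and Corollary \ref{non_disting_no_dists} require a \emph{finite} number of $H$-orbits on $\Gamma(\gd)$; this holds because $\Gamma(\gd)$ is (the $F$-points of) the nilpotent cone of the symmetric space $\gd$, on which $H$ has finitely many orbits by the theory of $\theta$-groups (Kostant--Rallis). I would cite this and then apply Corollary \ref{non_disting_no_dists} with $V = \gd$, $V^* \cong \gd$ via $B$, noting that $\Gamma(V) = \Gamma(V^*) = \Gamma(\gd)$ under this identification, to conclude that $\xi$ is supported on the closure of the distinguished (equivalently non-negative-defect) locus. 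Combining with density of that locus in $\Supp(\xi)$ and the pointwise inequality $\de(x)\geq 0$ for distinguished $x$, the proposition follows.
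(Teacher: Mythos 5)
Your reduction fails at its central step: the pointwise claim ``every $H$-distinguished element of $\Gamma(\gd)$ has non-negative defect'' is not true, and in fact it contradicts the way this proposition is used two pages later. The entire point of the notion ``pair of negative distinguished defect'' and of the corollary to Sekiguchi's theorem is that for \emph{nice} symmetric pairs (a class containing, e.g., the group case $(\g_1\oplus\g_1,\g_1)$, where regular nilpotents are distinguished) \emph{all} distinguished elements of $\Gamma(\gd)$ have \emph{negative} defect: Sekiguchi gives $\delta(\g)>0$ for distinguished $\pi(e)$, and the final lemma of the section turns this into $\de<0$. If your implication held, such pairs would have no distinguished nilpotents at all, which is false, and Theorem \ref{neg_dist_def} would be vacuous. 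Correspondingly, the logical role of Proposition \ref{neg_def_no_dist} is to be \emph{independent} of Corollary \ref{non_disting_no_dists}: in the proof of Theorem \ref{neg_dist_def} the corollary kills the non-distinguished strata and the proposition kills the distinguished strata (precisely because those have negative defect). So the proposition cannot be a formal consequence of the corollary plus a pointwise inequality; the element you would need to exhibit in the slice $x+(\g^f)_1$ to derive a contradiction from $\de(x)<0$ simply does not exist in general, which is why you (correctly) identified that construction as the main obstacle.

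The paper itself gives no new argument here: it states that the proof is the same as that of Proposition 7.3.7 of \cite{AG_HC}, and that argument is of a completely different nature. It does not use distinguishedness, conormal slices, or the weakly coisotropic/wave front machinery at all; it is a homogeneity argument. One stratifies $\Gamma(\gd)$ by $H$-orbits, assumes an orbit $Hx$ consisting of negative-defect elements is open in $\Supp(\xi)$, and uses the graded $sl_2$-triple through $x$ (the one-parameter subgroup attached to $h$, which rescales $x$) together with Frobenius reciprocity (Theorem \ref{Frob}) to compute the possible twisted homogeneity degrees of a distribution supported on $\overline{Hx}$; the Homogeneity Theorem \ref{ArchHom} (applicable since both $\xi$ and $\Fou(\xi)$ are supported on $\Gamma(\gd)=Z(B)\cap Q(\gd)$-type sets) forces degrees $\frac{\dim}{2}$ or $\frac{\dim}{2}+1$, and the discrepancy between these and the orbit computation is exactly measured by the defect $\de(x)=\tr(h|_{(\g^e)_0})-\dim(\g_1)$; negative defect yields a contradiction. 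If you want to salvage your write-up, you should replace the appeal to Corollary \ref{non_disting_no_dists} by this homogeneity/Frobenius computation (your finiteness-of-orbits remark via the Kostant--Rallis theory is still needed for the stratification, so that part can stay).
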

The proof is the same as the proof of \cite[Proposition 7.3.7]{AG_HC}.

\subsection{All the nice symmetric pairs are regular}
\begin{definition}
let $(G,H,\theta)$  be a symmetric pair Let $x \in \Gamma(\gd)$ be a nilpotent element. we will call it distinguished if it is distinguished with respect to the action of $H$ on $\gd.$
\end{definition}

\begin{lemma}
let $(G,H,\theta)$  be a symmetric pair. Assume that $\g$ is semi-simple. Then\\
(i) for any $x \in \gd$ we have $CN_{Hx,x}^{\gd}=(\gd)^x$\\
(ii) $Q(\gd)=\gd.$
\end{lemma}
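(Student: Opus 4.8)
The plan is to handle the two parts almost independently, using the $\Z/2\Z$-grading $\g=\h\oplus\gd$ (so that $[\h,\gd]\subset\gd$ and $[\gd,\gd]\subset\h$) together with the $\ad$-invariant non-degenerate form $B$ of Lemma \ref{BilForm} — here one may simply take $B$ to be the Killing form of $\g$, which is automatically $\theta$-invariant since $\theta$ is an automorphism — and the fixed identification $\gd\cong(\gd)^*$ it provides.

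For part (i), fix $x\in\gd$. The orbit $Hx$ is smooth at $x$ (orbits of algebraic group actions in characteristic $0$ are smooth homogeneous spaces), and its tangent space there is $T_x(Hx)=[\h,x]\subset\gd$. Hence, under the identification $\gd\cong(\gd)^*$, we get $CN_{Hx,x}^{\gd}=\{v\in\gd\mid B(v,[\alpha,x])=0\ \forall\,\alpha\in\h\}$. The plan is to rewrite the pairing by $\ad$-invariance of $B$: $B(v,[\alpha,x])=B(\alpha,[x,v])$, and to note $[x,v]\in[\gd,\gd]\subset\h$. Since $B|_{\h}$ is non-degenerate by Lemma \ref{BilForm}, the condition $B(\alpha,[x,v])=0$ for all $\alpha\in\h$ is equivalent to $[x,v]=0$, which gives $CN_{Hx,x}^{\gd}=(\gd)^x$. (Note that this part uses only the existence of $B$, not semisimplicity of $\g$.)

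For part (ii), observe first that since $G$ is reductive we have $(\gd)^H\subset(\gd)^{H^{\circ}}=(\gd)^{\h}$, so it suffices to prove $(\gd)^{\h}=0$, i.e. that no nonzero element of $\gd$ centralizes $\h$. The key observation is that if $y\in\gd$ satisfies $[\h,y]=0$, then $\ad(y)$ annihilates $\h$ and carries $\gd$ into $[\gd,\gd]\subset\h$, so $\ad(y)^2=0$; in particular $y$ is a nilpotent element of the semisimple Lie algebra $\g$. If $y\neq0$, Jacobson–Morozov yields $h'\in\g$ with $[h',y]=2y$; writing $h'=h'_0+h'_1$ with $h'_0\in\h$ and $h'_1\in\gd$, the $\gd$-component of $[h',y]$ equals $[h'_0,y]\in[\h,y]=0$, while it must also equal $2y\neq0$ — a contradiction. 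Hence $(\gd)^{\h}=0$, so $(\gd)^H=0$ and $Q(\gd)=(\gd/(\gd)^H)(F)=\gd$.

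Both parts are short; the only step requiring any thought is the identity $\ad(y)^2=0$ for $y\in(\gd)^{\h}$ in part (ii), which turns the problem into the standard fact that a nonzero nilpotent lies in the image of its own $\ad$ (equivalently, into an $\sl_2$-triple). The points to be careful about are that $Hx$ is smooth at $x$ so that the conormal space in (i) is defined, and that semisimplicity of $\g$ in (ii) is exactly what makes $\ad$ faithful and forces $\ad(y)$ nilpotent to imply $y$ nilpotent.
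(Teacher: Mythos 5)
Your proof is correct, and part (i) is essentially the computation the paper leaves as ``trivial'': identify $CN_{Hx,x}^{\gd}$ with the $B$-orthogonal complement of $T_x(Hx)=[\h,x]$ in $\gd$ and use invariance of $B$ together with non-degeneracy of $B|_{\h}$. Where you diverge is part (ii): the paper deduces it directly from part (i) in one line --- if $0\neq x\in(\gd)^H$ then the orbit $Hx$ is a point, so $CN_{Hx,x}^{\gd}=\gd$, hence by (i) $[\gd,x]=0$; since $x$ is $H$-fixed one also has $[\h,x]=0$, so $x\in Z(\g)=0$, a contradiction. You instead prove the stronger statement $(\gd)^{\h}=0$ from scratch, observing $\ad(y)^2=0$ for $y\in(\gd)^{\h}$ and then invoking Jacobson--Morozov to contradict $[h',y]=2y$ by comparing $\gd$-components. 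This works, and it is a nice self-contained argument, but the $\sl_2$-machinery is not needed: the same pairing trick you used in (i) already gives the conclusion, since for $y\in(\gd)^{\h}$ and $v\in\gd$ one has $[y,v]\in\h$ and $B([y,v],\alpha)=-B(v,[y,\alpha])=0$ for all $\alpha\in\h$, so $[y,v]=0$ and $y\in Z(\g)=0$; equivalently, just quote part (i) as the paper does. So the two approaches differ only in (ii): the paper's route reuses (i) and is shorter, while yours is independent of (i) and records the slightly stronger fact that even the infinitesimal fixed space $(\gd)^{\h}$ vanishes.
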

\begin{proof}$ $\\
(i) is trivial.\\
(ii) assume the contrary: there exist $0 \neq x \in \gd$ such that
$Hx=x$. Then $\dim (CN_{Hx,x}^{\gd}) = \dim \gd$, hence
$CN_{Hx,x}^{\gd} = \gd$ which means, $\gd=(\gd)^x$. therefor $x$
lies in the center of $\g$ which is impossible.
\end{proof}
\begin{corollary}
Our definition of distinguished element coincides with the one in
\cite{Sak}. Namely an element $x \in \Gamma(\gd)$ is
distinguished iff $((\g_s)^\sigma)^x$ does not contain
semi-simple elements. Here $\g_s$ is the semi-simple part of $\g.$
\end{corollary}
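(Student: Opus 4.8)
The plan is to relate two a priori different characterizations of distinguished nilpotent elements: the one used in the present paper, phrased in terms of the conormal space $CN_{Hx,x}^{\gd}$ being contained in $\Gamma((\gd)^*)$, and the one used in \cite{Sak}, phrased in terms of the centralizer $((\g_s)^\sigma)^x$ containing no non-zero semisimple elements. The first step is to reduce to the semisimple case: if $\g = \z(\g) \oplus \g_s$ is the decomposition into center and semisimple part, then this is a decomposition of graded Lie algebras, $\gd = \z(\g)^\sigma \oplus (\g_s)^\sigma$, and the center acts trivially under the adjoint action, so $CN_{Hx,x}^{\gd}$ for $x \in \Gamma(\gd)$ (which forces the $\z(\g)^\sigma$-component of $x$ to vanish, as nilpotent elements lie in $\g_s$) splits accordingly and distinguishedness only involves the $(\g_s)^\sigma$-part. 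Thus we may and do assume $\g$ is semisimple, so that the preceding lemma applies and gives $CN_{Hx,x}^{\gd} = (\gd)^x$ and $Q(\gd) = \gd$.

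With $\g$ semisimple, I would next identify $\gd$ with $(\gd)^*$ via the invariant form $B$ of Lemma \ref{BilForm}; under this identification $CN_{Hx,x}^{\gd} = (\gd)^x$ becomes a subspace of $(\gd)^* \cong (\g^{\sigma})^*$, and the condition $CN_{Hx,x}^{\gd} \subset \Gamma((\gd)^*)$ translates to: every element of $(\gd)^x = ((\g_s)^\sigma)^x$ lies in $\Gamma(\gd)$, i.e.\ is a nilpotent element of $\gd$ (in the sense that $0$ is in the closure of its $H$-orbit). So the claim becomes the statement that a subspace of the symmetric space $\gd$ consisting of centralizing elements of $x$ is contained in the nilpotent cone if and only if it contains no non-zero semisimple element. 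One inclusion is immediate: a semisimple element is never nilpotent (its orbit is closed and does not contain $0$ unless it is zero), so if $((\g_s)^\sigma)^x \subset \Gamma(\gd)$ then it has no non-zero semisimple element. For the converse I would use the standard structure theory of the symmetric space: any element $y \in \gd$ has a Jordan decomposition $y = y_s + y_n$ with $y_s, y_n \in \gd$ commuting, $y_s$ semisimple, $y_n$ nilpotent, and this decomposition is preserved by the centralizer; moreover $y \in \Gamma(\gd)$ iff $y_s = 0$ (the closure of the orbit contains $0$ exactly when the semisimple part vanishes, which follows from the $F^\times$-action contracting nilpotents via an $sl_2$-triple together with the Hilbert--Mumford / Kempf-type criterion over $F$). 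Hence if $((\g_s)^\sigma)^x$ contains no non-zero semisimple element, then for every $y$ in it we have $y_s = 0$ (since $y_s$ is again a semisimple element of the same centralizer, being a polynomial in $y$), so $y = y_n$ is nilpotent, i.e.\ $y \in \Gamma(\gd)$, giving $((\g_s)^\sigma)^x \subset \Gamma(\gd)$.

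The main obstacle I anticipate is making precise, over a general local field $F$ of characteristic $0$ rather than over an algebraically closed field, the equivalence ``$y \in \Gamma(\gd) \iff y_s = 0$'' and the compatibility of the Jordan decomposition with the rational centralizer $((\g_s)^\sigma)^x$. The fact that $y_s$ lies in $(\gd)^x$ whenever $y$ does is unproblematic since $y_s$ is a limit of polynomials in $y$ with no constant term and the centralizer is Zariski closed; the delicate point is the orbit-closure characterization of $\Gamma$, which is where one needs an $F$-rational version of the Hilbert--Mumford criterion (this is exactly the kind of input that underlies the notion $\Gamma(V)$ introduced earlier, so I would cite the relevant statement from \cite{AG_HC} or its references rather than reprove it). Given that, the corollary follows by assembling the two inclusions, and I would also remark that ``$\g_s$'' appears in the statement precisely so that central toral directions in $\gd$ — which are semisimple but harmless — do not interfere, matching the reduction in the first paragraph.
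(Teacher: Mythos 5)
Your argument is correct and is essentially the intended one: the paper states this corollary without proof, as an immediate consequence of the preceding lemma ($CN_{Hx,x}^{\gd}=(\gd)^x$, $Q(\gd)=\gd$) combined with the standard Kostant--Rallis facts that, after identifying $\gd$ with $(\gd)^*$ via $B$, the set $\Gamma(\gd)$ is exactly the nilpotent cone (via a graded $sl_2$-triple over $F$, as in \cite{AG_HC}) and that, by the $\theta$-compatible Jordan decomposition, the centralizer $((\g_s)^\sigma)^x$ consists of nilpotent elements iff it contains no nonzero semisimple element. Your reduction to the semisimple part and the two inclusions are exactly these steps; the only cosmetic slip is that $y_s$ is literally a polynomial in $y$ without constant term (no limit needed), which changes nothing.
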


\begin{definition}
We will call a symmetric pair $(G,H,\theta)$ a pair of negative distinguished defect if all the distinguished elements in $\Gamma(\gd)$ have negative defect.
\end{definition}

\begin{theorem}\label{neg_dist_def}
Let $(G,H,\theta)$ be a symmetric pair of negative distinguished defect. Then it is special.
\end{theorem}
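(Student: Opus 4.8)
The plan is to reduce everything to the two density statements already proved, namely Corollary~\ref{non_disting_no_dists} and Proposition~\ref{neg_def_no_dist}. By definition $(G,H,\theta)$ being special means that the representation of $H$ on $\gd$ is special, so what I must show is: any $\xi\in\Sc^*_{Q(\gd)}(\Gamma(\gd))^{H(F)}$ for which $\Fou_{B_i}(\xi)$ is supported in $\Gamma(\gd)$ for every $H$-invariant decomposition $Q(\gd)=W_1\oplus W_2$ and every pair $B_i$ of $H$-invariant non-degenerate symmetric bilinear forms on $W_i$ must vanish. I would argue by contradiction: assume $\xi\neq 0$ and analyze the closed $H(F)$-invariant set $\Supp(\xi)\subset\Gamma(\gd)$.

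First I would single out the two special cases of the Fourier hypothesis that I actually need. Taking the trivial decomposition $W_1=Q(\gd)$, $W_2=0$ and the fixed form $B$ of Lemma~\ref{BilForm} (restricted to $\gd$ it is non-degenerate, symmetric and $H$-invariant, and it induces such a form on $Q(\gd)\subset\gd(F)$), the hypothesis gives that $\Fou_B(\xi)$ is supported in $\Gamma(\gd)$; together with $\Supp(\xi)\subset\Gamma(\gd)$ this is exactly the input of Proposition~\ref{neg_def_no_dist}, which yields that the set $N$ of points of $\Supp(\xi)$ of non-negative defect is dense in $\Supp(\xi)$. On the other hand, identifying $\gd\cong(\gd)^*$ via $B$, the same data say that $\widehat{\xi}$ is supported in $\Gamma((\gd)^*)$, so Corollary~\ref{non_disting_no_dists}, applied to the reductive group $H$ acting on $V=\gd$ (here $\Gamma(\gd)$ is a finite union of $H$-orbits, so the standing finiteness hypothesis of that corollary holds), shows that the set $D$ of distinguished points of $\Supp(\xi)$ is dense in $\Supp(\xi)$.

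Now I would combine $D$ and $N$ using the orbit structure. Since $\Gamma(\gd)$ has finitely many $H$-orbits, $\Supp(\xi)$ is a finite union of $H$-orbit closures and hence contains an $H$-orbit $O$ that is open in $\Supp(\xi)$ (take, say, one of maximal dimension). Density of $D$ forces $O$ to meet $D$, and since distinguishedness is an invariant of the $H$-orbit, all of $O$ is distinguished; by the negative-distinguished-defect hypothesis every point of $O$ then has negative defect. But density of $N$ forces $O$ to meet $N$, i.e.\ to contain a point of non-negative defect, and the defect too is constant along $H$-orbits (conjugate the graded $sl_2$-triple $\pi_x$ by an element of $H=G^{\theta}$, which preserves the grading). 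This contradiction shows $\Supp(\xi)=\emptyset$, i.e.\ $\xi=0$, so the action of $H$ on $\gd$ is special.

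The genuinely routine steps are the two invocations of the cited results and the ``finitely many orbits $\Rightarrow$ an open orbit'' remark. The points that need a line of care — and which I expect to be the only content beyond bookkeeping — are verifying that the Fourier transform $\Fou_B$ fed into Proposition~\ref{neg_def_no_dist} is indeed an instance of the $\Fou_{B_i}$ allowed in the definition of a special representation, and that ``distinguished'' and ``defect'' are both honest $H$-orbit invariants; it is precisely the clash between these two orbit invariants on the open orbit of $\Supp(\xi)$ that produces the contradiction.
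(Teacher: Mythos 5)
Your proof is correct and is essentially the paper's argument: both reduce to Corollary \ref{non_disting_no_dists} and Proposition \ref{neg_def_no_dist} applied along the finite $H$-orbit structure of $\Gamma(\gd)$, using only the trivial decomposition and the form $B$ of Lemma \ref{BilForm}. The paper phrases it as a descending induction over an orbit stratification, splitting into the distinguished/non-distinguished cases for the top stratum, while you phrase it as a contradiction on an open orbit of $\Supp(\xi)$; the two formulations are interchangeable.
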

\begin{proof}
Let $\xi \in \Sc^*(Q(\gd))^{H(F)}$ such that both $\xi$ and $\Fou(\xi)$ are supported in $\Gamma(\gd)$. Choose stratification  $$\Gamma(\gd)=X_n \supset X_{n-1} \supset X_{0}=0 \supset X_{-1}= \emptyset$$ such that $X_i-X_{i-1}$ is an $H$-orbit wich is open in $X_i$. We will prove by descending induction that $\xi$ is suported on $X_i$. So we fix $i$ and assume that $\xi$ is suported on $X_i$, our aim is to prove that $\xi$ is suported on $X_{i-1}$. Suppose that $X_i-X_{i-1}$ is non-distinguished. Then by Corollary \ref{non_disting_no_dists} we have $\Supp(\xi) \subset X_{i-1}$. Now suppose that $X_i-X_{i-1}$ is distinguished. Then by Proposition \ref{neg_def_no_dist} we have $\Supp(\xi) \subset X_{i-1}$.
\end{proof}

We will use the notion of nice symmetric pair from \cite{LS}. We
will use the following definition.

\begin{definition}
A symmetric pair $(G,H,\theta)$ is called nice iff the semi simple part of the pair $(\g,\h)$ decomposes, over the algebraic closure, to a product of pairs
of the following types:
\begin{itemize}
\item $(g_1 \oplus g_1, g_1)$, where $g_1$ is a simple Lie algebra
\item $(sl_m, so_m)$
\item $(sl_{2m}, sl_m \oplus sl_m \oplus \g_a)$, where $\g_a$ is the one dimensional Lie algebra.
\item $(sp_{2m}, sl_m \oplus \g_a)$
\item $(so_{2m + k}, so_{m + k} \oplus so_{m})$, for k = 0, 1, 2
\item $(e_6, sp_8)$
\item $(e_6 , sl_6 \oplus sl_2)$
\item $(e_7 , sl_8)$
\item $(e_8 , so_{16})$
\item $(f_4 , sp_6 \oplus sl_2)$
\item $(g_2 , sl_2 \oplus sl_2)$

\end{itemize}
\end{definition}

This notion is motivated by \cite{Sak}, where the following
theorem is proven (see Theorem 6.3).
\begin{theorem}
Let $(G,H,\theta)$  be a nice symmetric pair. Let $\pi: sl_2 \to \g$ be a graded homomorphism such that $\pi(e)$ is distinguished. Consider $\g$ as a graded representation of $sl_2$, decompose it to irreducible representations by $\g= \bigoplus V_{\lambda_i}^{\omega_i}$. Then
$$\sum_{i \text{ s.t. } \omega_i (-1)^{\lambda_i} =-1} (\lambda_i + 2)-\dim(\gd) > 0.$$
\end{theorem}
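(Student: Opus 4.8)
This is Theorem~6.3 of \cite{Sak}; here is the route I would follow.

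\emph{Reformulation as a defect inequality.} Equip $\g$ with its Killing form. As $\theta$ is an automorphism this form is $\theta$-invariant, hence compatible with the $\Z/2\Z$-grading, and it is $\ad(\g)$-invariant, hence $sl_2$-invariant through $\pi$; thus $\g\cong\g^*$ as graded $sl_2$-modules, so by Lemma~\ref{Graded_star} the irreducibles $V_\lambda^1$ and $V_\lambda^{-1}$ occur in $\g$ with equal multiplicity for every odd $\lambda$. Combining this with the elementary count of $\dim(V_\lambda^w)_1$ and with the formula of Lemma~\ref{Defects}, a bookkeeping through the four cases ($\lambda$ even/odd, $\omega=\pm1$) gives the identity
$$\sum_{i:\ \omega_i(-1)^{\lambda_i}=-1}(\lambda_i+2)-\dim(\gd)=-\de(\pi).$$
So the assertion is precisely that $\de(x)<0$ for every distinguished nilpotent $x=\pi(e)\in\gd$ — i.e.\ that every nice symmetric pair has negative distinguished defect (which is exactly what is used in Theorem~\ref{neg_dist_def}).

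\emph{Reduction to simple pairs.} Since the decomposition in the statement and the notion of a nice pair already refer to the algebraic closure, I may assume $F$ algebraically closed; the centre of $\g$ contributes only trivial graded summands, so I may assume $\g$ semisimple. For a product of symmetric pairs $\g=\bigoplus_j\g^{(j)}$ all of the graded $sl_2$-module structure, distinguishedness, and the left-hand side split over the factors: $(\gd)^x=\bigoplus_j(\g^{(j)}_1)^{x_j}$ contains a nonzero semisimple element iff some summand does, and the defect (equivalently the left-hand side) is additive. Hence it suffices to prove $\de(x)<0$ for every distinguished nilpotent $x$ in each of the finitely many simple pairs appearing in the definition of nice.

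\emph{Case analysis, and the main obstacle.} For the classical families one invokes the parametrization of nilpotent $H$-orbits on $\gd$ by combinatorial data — partitions with a sign/type for $(sl_m,so_m)$ and for the orthogonal pairs $(so_{2m+k},so_{m+k}\oplus so_m)$, pairs of partitions for the $sl$- and symplectic pairs — together with the Kostant--Rallis/Sekiguchi recipe producing the graded $sl_2$-decomposition of $\g$ from that data; the distinguished orbits are those whose data leaves no trivial graded summand inside $\gd$, which cuts the combinatorics down sharply (e.g.\ to the regular nilpotent for $(sl_m,so_m)$), after which the inequality becomes an elementary inequality on the partitions. For the exceptional pairs $(e_6,sp_8)$, $(e_6,sl_6\oplus sl_2)$, $(e_7,sl_8)$, $(e_8,so_{16})$, $(f_4,sp_6\oplus sl_2)$, $(g_2,sl_2\oplus sl_2)$ there are only finitely many nilpotent orbits, and one reads off the distinguished ones and their $sl_2$-data from the tables of nilpotent orbits in symmetric spaces and evaluates the left-hand side numerically. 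The main obstacle is exactly this step: setting up, uniformly for each family, the list of distinguished nilpotents together with the graded $sl_2$-type of $\g$, and then dispatching the classical families by the resulting partition inequality and the exceptional ones by a finite check — the reformulation and the closure/product reductions being formal, essentially all the content sits in this case analysis, which is why \cite{Sak} treats it as the heart of the theorem.
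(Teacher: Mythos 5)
The paper does not actually prove this statement: it is imported verbatim from Sekiguchi, \cite[Theorem 6.3]{Sak} (``This notion is motivated by \cite{Sak}, where the following theorem is proven''), so there is no internal argument to compare yours against. Your preliminary reductions are sound and in fact reproduce what the paper does \emph{with} the theorem rather than \emph{in} it: your identity ``left-hand side $=-\de(\pi)$'' is exactly the paper's closing lemma $\delta(V)+\delta(V^*)+\de(V)+\de(V^*)=0$ specialized to $V=\g\cong\g^*$, and the resulting reformulation ``every nice pair has negative distinguished defect'' is precisely the corollary the paper extracts for use in Theorem \ref{neg_dist_def}. (For the self-duality of $\g$ as a graded $sl_2$-module you should invoke the invariant form of Lemma \ref{BilForm} rather than the Killing form, since $\g$ need not be semisimple; as you note, the centre only contributes trivial graded summands, and in fact odd central summands only strengthen the inequality.)

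As a proof, however, your proposal stops where the theorem begins. After the reduction to the simple pairs in the list, everything rests on the case-by-case verification: parametrizing the nilpotent $H$-orbits in $\gd$ for each family, isolating the distinguished ones, extracting the graded $sl_2$-decomposition of $\g$ from that data, and checking the inequality — by partition combinatorics for the classical families and by finite table checks for $(e_6,sp_8)$, $(e_6,sl_6\oplus sl_2)$, $(e_7,sl_8)$, $(e_8,so_{16})$, $(f_4,sp_6\oplus sl_2)$, $(g_2,sl_2\oplus sl_2)$. None of this is carried out, not even for the one family $(sl_m,so_m)$ you single out, and by your own admission essentially all the content sits there; moreover your working criterion for distinguishedness (``no trivial graded summand inside $\gd$'') should be reconciled with the one the paper records from \cite{Sak} (no nonzero semisimple element in $((\g_s)^{\sigma})^x$), which requires a short argument via the $\theta$-stable reductive centralizer of the $sl_2$-triple. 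So the verdict depends on intent: if, like the paper, you simply cite \cite[Theorem 6.3]{Sak}, the statement is adequately disposed of and your sketch correctly describes how Sekiguchi's proof goes; if the sketch is meant to stand as a proof, the entire case analysis is a genuine, unfilled gap.
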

\begin{corollary}
Any nice symmetric pair is of negative distinguished defect. Thus by Theorem \ref{neg_dist_def} it is special and hence weakly linearly tame and regular.
\end{corollary}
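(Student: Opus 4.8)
The plan is to deduce the negativity of the distinguished defect directly from the theorem of Sakellaridis quoted above, by showing that the defect of a distinguished nilpotent element is, up to a sign, exactly the quantity appearing there. Fix a nice symmetric pair $(G,H,\theta)$ and a distinguished element $x\in\Gamma(\gd)$, and choose a graded homomorphism $\pi_x\colon sl_2\to\g$ with $\pi_x(e)=x$. This makes $\g$ a graded representation of $sl_2$ with $\g_1=\gd$, and by definition $\de(x)=\de(\g)$. Decompose $\g=\bigoplus_i V_{\lambda_i}^{\omega_i}$ into graded irreducibles. By the additivity part of Lemma \ref{Defects} we have $\de(\g)=\sum_i\de(V_{\lambda_i}^{\omega_i})$, while clearly $\dim\gd=\sum_i\dim\bigl((V_{\lambda_i}^{\omega_i})_1\bigr)$.

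The core of the argument is the identity
\[
\de(\g)\;=\;\dim(\gd)\;-\!\!\sum_{i\,:\,\omega_i(-1)^{\lambda_i}=-1}\!\!(\lambda_i+2).
\]
I would prove it in two steps. First, the multiset $\{(\lambda_i,\omega_i)\}_i$ is invariant under $(\lambda,\omega)\mapsto(\lambda,\omega(-1)^{\lambda})$: the form $B$ from Lemma \ref{BilForm} is $G$-invariant, hence invariant under $\pi_x(sl_2)$, and $\theta$-invariant, hence it restricts to a nondegenerate pairing on each of $\g_0$ and $\g_1$ separately; thus $B$ gives an isomorphism $\g\cong\g^{*}$ of graded $sl_2$-representations, and Lemma \ref{Graded_star} turns this into the asserted symmetry of the multiset. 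In particular $V_{\lambda}^{1}$ and $V_{\lambda}^{-1}$ occur with equal multiplicity whenever $\lambda$ is odd. Second, using the explicit formula for $\de(V_\lambda^\omega)$ from Lemma \ref{Defects} together with the elementary count of $\dim\bigl((V_\lambda^\omega)_1\bigr)$ (which depends only on $\lambda$ and on the parity of the highest-weight vector), one checks that the per-summand quantity $\de(V_\lambda^\omega)-\dim\bigl((V_\lambda^\omega)_1\bigr)+(\lambda+2)\cdot[\,\omega(-1)^{\lambda}=-1\,]$ vanishes when $\lambda$ is even and changes sign under $\omega\mapsto-\omega$ when $\lambda$ is odd. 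Summing over $i$ and using the multiplicity symmetry from the first step cancels all of these quantities, which yields the displayed identity.

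Granting the identity, Sakellaridis's theorem says that its right-hand side is strictly negative, so $\de(x)=\de(\g)<0$. Since $x$ was an arbitrary distinguished element of $\Gamma(\gd)$, the pair $(G,H,\theta)$ is of negative distinguished defect. By Theorem \ref{neg_dist_def} it is therefore special; and since a symmetric pair is special (resp.\ weakly linearly tame) precisely when the action of $H$ on $\gd$ is special (resp.\ weakly linearly tame), Proposition \ref{SpecWeakTameAct} shows that the pair is weakly linearly tame, whence it is regular by the (evident) proposition stating that any weakly linearly tame pair is regular.

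I expect the only genuine work to be the bookkeeping behind the displayed identity: one must fix the convention $\omega=(-1)^{p}$, with $p$ the parity of the highest-weight vector, and carefully track which weight spaces of $V_\lambda^\omega$ lie in degree $0$ and which in degree $1$, so that $\dim\bigl((V_\lambda^\omega)_1\bigr)$ and $\tr\bigl(h|_{((V_\lambda^\omega)^{e})_0}\bigr)$ agree with Lemma \ref{Defects}, and then check the vanishing and the antisymmetry of the per-summand quantity. It is essential to invoke the invariant form $B$: the identity fails summand by summand when $\lambda$ is odd, and only the resulting multiplicity symmetry rescues the sum. Everything else is a direct application of results cited above.
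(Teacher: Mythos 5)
Your proposal is correct and follows essentially the same route as the paper: both deduce negativity of the defect of a distinguished element from Sekiguchi's/Sakellaridis-type theorem quoted in the paper by combining the self-duality $\g\cong\g^*$ as a graded $sl_2$-representation (coming from the invariant form $B$ of Lemma \ref{BilForm}) with the computations in Lemmas \ref{Defects} and \ref{Graded_star}, and then conclude special $\Rightarrow$ weakly linearly tame $\Rightarrow$ regular exactly as in the paper. The only difference is packaging: the paper states the computation as the identity $\delta(V)+\delta(V^*)+\de(V)+\de(V^*)=0$ and applies it to the self-dual $V=\g$, whereas you use self-duality to get the multiplicity symmetry and prove $\de(\g)=-\delta(\g)$ summand by summand, which is an equivalent bookkeeping of the same argument.
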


This corollary follows immediately from the theorem using the following lemma and the fact that $\g \cong \g^*$ as a graded representation of $sl_2$
\begin{lemma}
Let $V$ be a graded representation of $sl_2$. Decompose it to irreducible representations by $V= \bigoplus V_{\lambda_i}^{\omega_i}$. Denote
$$\delta(V) :=\sum_{i \text{ s.t. } \omega_i (-1)^{\lambda_i} =-1} (\lambda_i + 2)-\dim(V_1).$$
Then
$$\delta(V)+\delta(V^*)+\de(V)+\de(V^*)=0$$
\end{lemma}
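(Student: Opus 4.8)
The plan is to reduce the identity $\delta(V)+\delta(V^*)+\de(V)+\de(V^*)=0$ to a purely combinatorial statement about a single irreducible graded representation $V_\lambda^w$, using Lemma \ref{Defects}(1) (additivity of defect) together with the obvious additivity of $\delta$ over direct sums. Since $(V\oplus V')^*=V^*\oplus (V')^*$, all four quantities are additive in $V$, so it suffices to prove the identity when $V=V_\lambda^w$ is irreducible. First I would record, via Lemma \ref{Graded_star}, that $(V_\lambda^w)^*=V_\lambda^{w(-1)^\lambda}$, so the whole identity becomes a statement comparing the sign $w$ with the sign $w(-1)^\lambda$; in particular, when $\lambda$ is even the two signs coincide and the identity reads $2\delta(V_\lambda^w)+2\de(V_\lambda^w)=0$, while for $\lambda$ odd we must pair $w$ with $-w$.

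Next I would compute $\delta(V_\lambda^w)$ explicitly from the definition. For the single summand $V_\lambda^w$, the condition ``$\omega_i(-1)^{\lambda_i}=-1$'' either holds for the whole representation or not at all, so $\delta(V_\lambda^w)$ equals $(\lambda+2)-\dim((V_\lambda^w)_1)$ when $w(-1)^\lambda=-1$ and equals $-\dim((V_\lambda^w)_1)$ otherwise. The dimension of the odd part $\dim((V_\lambda^w)_1)$ is elementary: the weights of $V_\lambda$ are $\lambda,\lambda-2,\dots,-\lambda$, the highest weight vector has parity determined by $w$, and parities alternate down the weight ladder, so $\dim((V_\lambda^w)_1)$ is $\lceil (\lambda+1)/2\rceil$ or $\lfloor(\lambda+1)/2\rfloor$ depending on $w$ and on the parity of $\lambda$. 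I would assemble these into a closed formula for $\delta(V_\lambda^w)$ parallel to the formula for $\de(V_\lambda^w)$ given in Lemma \ref{Defects}(2).

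Then the proof is finished by a direct case check in the four cases $(\lambda \bmod 2, w)$: substitute the closed forms for $\delta(V_\lambda^w)$, $\delta((V_\lambda^w)^*)=\delta(V_\lambda^{w(-1)^\lambda})$, and the already-known $\de(V_\lambda^w)$, $\de((V_\lambda^w)^*)=\de(V_\lambda^{w(-1)^\lambda})$ from Lemma \ref{Defects}, and verify the four terms cancel. Concretely, for $\lambda$ even one checks $\delta(V_\lambda^w)+\de(V_\lambda^w)=0$ for each $w\in\{1,-1\}$ (so the doubled sum vanishes); for $\lambda$ odd one checks $\delta(V_\lambda^1)+\delta(V_\lambda^{-1})+\de(V_\lambda^1)+\de(V_\lambda^{-1})=0$, where the $\delta$'s contribute exactly one copy of $(\lambda+2)$ (from whichever sign makes $w(-1)^\lambda=-1$) and the rest is bookkeeping of the odd-part dimensions. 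I expect the main obstacle to be purely notational: getting the parity conventions for $V_\lambda^w$ and the count $\dim((V_\lambda^w)_1)$ exactly consistent with the conventions fixed just before Lemma \ref{Graded_star}, so that the signs in $\de$ from Lemma \ref{Defects}(2) line up correctly with the signs produced by $\delta$. Once the conventions are pinned down, the verification is a short finite computation.
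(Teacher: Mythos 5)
Your proposal is correct and follows essentially the same route as the paper, which disposes of the lemma as a ``straightforward computation using Lemma \ref{Defects} and Lemma \ref{Graded_star}'': additivity of both $\delta$ and $\operatorname{def}$ reduces to the irreducible case $V_\lambda^w$, where the closed formulas and the duality $(V_\lambda^w)^*=V_\lambda^{w(-1)^\lambda}$ give the cancellation. Your write-up merely makes explicit the case check (e.g.\ $\delta+\operatorname{def}=0$ for $\lambda$ even, and the pairing of $w$ with $-w$ contributing a single $\lambda+2$ for $\lambda$ odd), which indeed verifies.
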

\begin{proof}
This lemma is straightforward computation using Lemma \ref{Defects} and Lemma \ref{Graded_star}.
\end{proof}

\section{A uniform proof of Multiplicity One Theorems for $GL_n$} \label{MOT}
\setcounter{lemma}{0} In this section we indicate a proof of
Multiplicity one Theorems for $GL_n$ which is uniform for all local
fields of characteristic 0. This theorem was proven for the
non-Archimedean case in \cite{AGRS} and for the Archimedean case
in \cite{AG_AMOT} and \cite{SZ}. We will not give all the details
since this theorem was proven before. We will indicate the main
steps and will give the details in the parts which are more
essential. The proof that we present here is based on the ideas from
the previous proofs and uses our partial analog of the
integrability theorem.

Let us first formulate the Multiplicity one Theorems for $GL_n$.

\begin{theorem}\label{MO}
Consider the standard imbedding $\mathrm{GL}_n(F) \hookrightarrow
\mathrm{GL}_{n+1}(F)$. We consider the action of $\mathrm{GL}_n(F)$ on $\mathrm{GL}_{n+1}(F)$ by conjugation.
Then any $\mathrm{GL}_n(F)$-invariant distribution on $\mathrm{GL}_{n+1}(F)$ is invariant with respect
to transposition.
\end{theorem}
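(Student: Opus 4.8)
The plan is to reduce Theorem~\ref{MO} to a statement about distributions on a linear space, following the strategy of \cite{AGRS} and \cite{AG_AMOT}. First I would apply Theorem~\ref{DistCrit} with $G=\mathrm{GL}_{n+1}$, $H=\mathrm{GL}_n$ and $\tau$ the transposition anti-automorphism of $\mathrm{GL}_{n+1}$, which visibly preserves $\mathrm{GL}_n$; this shows that it suffices to prove that every bi-$\mathrm{GL}_n(F)$-invariant distribution on $\mathrm{GL}_{n+1}(F)$ is $\tau$-invariant. Equivalently, passing from the two-sided action to the conjugation action, one wants: every $\mathrm{GL}_n(F)$-conjugation-invariant distribution on $\mathrm{GL}_{n+1}(F)$ is transposition invariant. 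A further standard reduction (Harish-Chandra descent / linearization via the Cayley transform or via passing to the Lie algebra $\mathfrak{gl}_{n+1}$, together with localization at semisimple points and induction on $n$) reduces the problem to the \emph{linear} statement: any $\mathrm{GL}_n(F)$-invariant distribution on $\mathfrak{gl}_{n+1}(F)$, or more precisely on the slice $V := \mathfrak{gl}_n(F)\oplus F^n \oplus (F^n)^* \oplus F$ carrying the natural $\mathrm{GL}_n$-action, is invariant under the relevant involution.

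Next I would set up the key geometric input. One decomposes the support of a putative counterexample $\xi$ using the singular support. By Property~\ref{Supp2SS} the Zariski closure of $\Supp(\xi)$ equals $p_X(SS(\xi))$, by Property~\ref{Ginv} the singular support lies in $\{(x,\phi)\mid \phi(\mathfrak{g}x)=0\}$, by Property~\ref{Fou} the Fourier transform controls $SS(\widehat\xi)$, and by Property~$(4')$ — the partial analog of the integrability theorem proved in this paper — $SS(\xi)$ is weakly coisotropic. I would then run the by-now standard stratification of the nilpotent cone and its Fourier-dual conditions: combining weak coisotropy with the conditions coming from invariance and from the Fourier transform forces $\xi$ to be supported on a very small set of orbits, essentially the "distinguished" ones. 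This is exactly the kind of place where Corollary~\ref{non_disting_no_dists} applies, ruling out non-distinguished strata, so after stratifying one is reduced to distributions supported on the distinguished (regular nilpotent type) locus.

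Then I would carry out the final local analysis at the distinguished stratum. Here one uses the explicit structure of the pair $(\mathrm{GL}_{n+1},\mathrm{GL}_n)$: at the distinguished/regular points the stabilizer and the relevant normal data are small enough that one can invoke the Homogeneity Theorem~\ref{ArchHom} together with Frobenius reciprocity (Theorem~\ref{Frob}) and an explicit computation to show there is no nonzero invariant anti-invariant distribution, contradicting the existence of $\xi$. This step is where the uniformity across all local fields of characteristic $0$ is genuinely gained, since both the wave-front-set machinery (non-Archimedean) and the $D$-module machinery (Archimedean) feed into the \emph{same} list of properties \ref{Supp2SS}--\ref{Gab}/$(4')$, so the downstream argument is field-independent.

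The main obstacle I expect is not the formal reductions, which are routine given \cite{AGRS, AG_AMOT}, but verifying that Property~$(4')$ — weak coisotropy rather than full coisotropy — is \emph{strong enough} at every step where the original proofs used the full integrability theorem. Concretely, one must check that in the stratification argument every appeal to "$SS(\xi)$ is coisotropic, hence cannot be too small on a given stratum" can be replaced by the weaker dimension/shift-invariance statement encoded in the four equivalent conditions of $T^*X$-weak coisotropy, and in particular that Corollary~\ref{GeoFrob} (geometric Frobenius for weakly coisotropic varieties) suffices in place of its coisotropic counterpart. I would handle this by isolating exactly which geometric lemmas about nilpotent orbits in $\mathfrak{gl}_{n+1}$ are invoked and re-deriving each from Proposition~\ref{non_disting_no_cois} and the weak-coisotropy formalism of Section~\ref{coisotropic}, rather than from the integrability theorem.
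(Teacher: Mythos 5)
Your overall reduction matches the paper's: Harish-Chandra descent to the linear slice $X=\sl(V)\times V\times V^*$ (your extra central/$\g_a$ directions are invariant and harmless), the use of Properties \ref{Supp2SS}--\ref{Fou} together with (4'), and the elimination of the non-distinguished strata via Proposition \ref{non_disting_no_cois} and Corollary \ref{non_disting_no_dists} are exactly how Section \ref{MOT} proceeds. (Your opening appeal to Theorem \ref{DistCrit} is backwards: Theorem \ref{MO} \emph{is} already the distributional statement, and \ref{DistCrit} is only used afterwards to deduce the representation-theoretic corollary; this is harmless noise.) Your worry about whether weak coisotropy suffices in place of full coisotropy is legitimate and is indeed what the paper checks, via Corollary \ref{GeoFrob} and the weak-coisotropy formalism.

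The genuine gap is at the distinguished (regular nilpotent) locus, which you dispose of by saying that ``the stabilizer and the relevant normal data are small enough'' for the Homogeneity Theorem \ref{ArchHom}, Frobenius reciprocity and an unspecified explicit computation to finish. That is precisely the step such arguments cannot do, and it is the heart of the whole multiplicity-one problem. In the paper, homogeneity (with Theorems \ref{Frob}, Propositions \ref{strat}, \ref{Arch_strat}) is used earlier and for something else: to cut the support from $S$ down to $S'$, i.e.\ to force $A^{n-1}v=(A^*)^{n-1}\phi=0$, using $S-S'\subset \sl(V)\times(V\times 0\cup 0\times V^*)$. What remains is not a distributional argument at all but the purely geometric statement (Theorem \ref{Geom}) that $\check{S}'$ contains no nonempty $X\times X$-weakly coisotropic subvariety: after Proposition \ref{non_disting_no_cois} handles $\check{S}''$, the regular-nilpotent part is treated by restricting via Corollary \ref{GeoFrob} to the fiber $R_A$ over a single Jordan block $A$ and proving the Key Lemma $\dim(R_A\cap L_{ii})<2n$, using the polynomial $f(v_1,\phi_1,v_2,\phi_2)=(v_1)_i(\phi_2)_{i+1}-(v_2)_i(\phi_1)_{i+1}$ and the observation that any nilpotent $B$ with $[A,B]=v_1\otimes\phi_2-v_2\otimes\phi_1$ must be strictly upper triangular, forcing the $(i,i+1)$ entry of the commutator to vanish. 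This commutator/dimension computation — an actual verification that weak coisotropy of $SS(\xi)$ fails on the distinguished stratum — is the missing idea in your sketch; without it (or an AGRS-style substitute of comparable depth) your final step does not close, since an invariant distribution with the homogeneity degree allowed by Theorem \ref{ArchHom} and supported on the regular nilpotent stratum is not excluded by homogeneity and Frobenius considerations alone.
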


It has the following corollary in representation theory.
\begin{theorem}
Let $\pi$ be an irreducible admissible smooth \Fre representation of
$\mathrm{GL}_{n+1}(F)$ and $\tau$  be an irreducible admissible smooth \Fre representation of
$\mathrm{GL}_{n}(F)$. Then
\begin{equation}\label{dim1}
\dim \Hom_{\mathrm{GL}_n(F)}(\pi,\tau) \leq 1.
\end{equation}
\end{theorem}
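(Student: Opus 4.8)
The statement to be proven is the representation-theoretic multiplicity-one bound $\dim \Hom_{\mathrm{GL}_n(F)}(\pi,\tau) \leq 1$, and the plan is to deduce it from the distributional statement Theorem \ref{MO}, exactly as in the classical Gelfand–Kazhdan method. First I would set $G := \mathrm{GL}_{n+1} \times \mathrm{GL}_n$ and $H := \Delta \mathrm{GL}_n$ (the diagonally embedded copy acting by conjugation on the first factor and standard embedding into the second), so that the pair $(\pi,\tau)$ of representations of $\mathrm{GL}_{n+1}$ and $\mathrm{GL}_n$ becomes a single representation $\pi \boxtimes \tilde\tau$ (or $\pi \boxtimes \tau$, depending on conventions) of $G$, and $\Hom_{\mathrm{GL}_n(F)}(\pi,\tau)$ becomes $\Hom_{H(F)}(\pi \boxtimes \tilde\tau, \cc)$. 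Thus it suffices to show that $(G,H)$ satisfies GP2 in the sense of Definition \ref{GPs}, since GP2 controls the product $\dim \Hom_{H(F)}(E,\cc)\cdot \dim\Hom_{H(F)}(\widetilde E,\cc)$, and one checks that both factors here are of the same shape by the symmetry of the setup (swapping $\pi$ with its contragredient, $\tau$ with its contragredient).

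Next I would invoke Theorem \ref{DistCrit}: to get GP2 for $(G,H)$ it is enough to produce an involutive anti-automorphism $\tau_0$ of $G$ with $\tau_0(H) = H$ such that every bi-$H(F)$-invariant distribution on $G(F)$ is $\tau_0$-invariant. The natural choice is $\tau_0(g_1,g_2) := (g_1^t, g_2^t)$, transposition on each factor; it is an anti-automorphism, it is involutive, and it preserves the diagonal $\mathrm{GL}_n$. A bi-$H(F)$-invariant distribution on $G(F) = \mathrm{GL}_{n+1}(F)\times\mathrm{GL}_n(F)$ is, after using the transitivity of the $\mathrm{GL}_n\times\mathrm{GL}_n$ action on the second factor and Frobenius reciprocity (Theorem \ref{Frob}), identified with a $\mathrm{GL}_n(F)$-conjugation-invariant distribution on $\mathrm{GL}_{n+1}(F)$; under this identification the anti-automorphism $\tau_0$ becomes precisely the transposition $g \mapsto g^t$ on $\mathrm{GL}_{n+1}(F)$. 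So the required $\tau_0$-invariance is exactly the content of Theorem \ref{MO}.

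Finally, one must check the bookkeeping: that GP2 for $(G,H)$ genuinely yields the asserted bound on $\dim\Hom_{\mathrm{GL}_n(F)}(\pi,\tau)$ for all irreducible admissible smooth Fréchet (resp. smooth admissible, in the $p$-adic case) $\pi,\tau$. Here one uses that $\tau$ irreducible admissible implies $\tilde\tau$ is too, that $\pi\boxtimes\tilde\tau$ is then an irreducible admissible representation of $G$, and that the contragredient of $\pi\boxtimes\tilde\tau$ is $\tilde\pi\boxtimes\tau$; applying GP2 to $E = \pi\boxtimes\tilde\tau$ gives $\dim\Hom_{H(F)}(\pi\boxtimes\tilde\tau,\cc)\cdot\dim\Hom_{H(F)}(\tilde\pi\boxtimes\tau,\cc)\leq 1$, and since each factor is a nonnegative integer, each is $\leq 1$, which is \eqref{dim1}. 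The main obstacle is not in this section at all — it is Theorem \ref{MO} itself, whose uniform proof (using the partial analog of the integrability theorem, the stratification into $\mathrm{GL}_n$-orbits via the reduction to the linearized problem on the Lie algebra, and the key point that the relevant nilpotent data carry no weakly coisotropic support) is the substance of what precedes; the passage recorded here is the routine Gelfand–Kazhdan wrapper. I would therefore keep this argument brief, citing Theorem \ref{DistCrit} and Theorem \ref{Frob} and referring to \cite{AGS} for the standard identifications, and spend the real effort on sketching the proof of Theorem \ref{MO}.
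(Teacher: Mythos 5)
Your overall route --- pass to the pair $(\mathrm{GL}_{n+1}\times \mathrm{GL}_n,\ \Delta \mathrm{GL}_n)$, embed $\Hom_{\mathrm{GL}_n(F)}(\pi,\tau)$ into $\Hom_{H(F)}(\pi\boxtimes\widetilde{\tau},\cc)$, and feed the transposition anti-involution into Theorem \ref{DistCrit}, with the distributional input being exactly Theorem \ref{MO} after Frobenius reciprocity --- is the standard Gelfand--Kazhdan wrapper that the paper implicitly relies on (it states the corollary without proof, referring the substance to Theorem \ref{MO}, as in \cite{AGS, AGRS, AG_AMOT}). So the approach is the intended one.

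However, your final bookkeeping step contains a genuine logical gap: from GP2 you get
$$\dim \Hom_{H(F)}(E,\cc)\cdot \dim \Hom_{H(F)}(\widetilde{E},\cc)\le 1,$$
and you conclude that ``since each factor is a nonnegative integer, each is $\le 1$.'' That inference is false: the product can be $0$ while one factor is arbitrarily large (e.g. $5\cdot 0\le 1$). This is precisely the sense in which GP2 is weaker than GP1, and GP2 alone does not yield \eqref{dim1}. The standard repair, and the one needed here, is the Gelfand--Kazhdan contragredient theorem for $\mathrm{GL}_m$: for any irreducible admissible $\pi$ one has $\widetilde{\pi}\cong \pi\circ\theta$ with $\theta(g)={}^tg^{-1}$ (known in both the $p$-adic and Archimedean settings). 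Since $\theta$, applied componentwise, preserves $H=\Delta \mathrm{GL}_n$, this gives $\Hom_{H(F)}(\widetilde{E},\cc)\cong \Hom_{H(F)}(E,\cc)$ for $E=\pi\boxtimes\widetilde{\tau}$, so the two factors are equal and the product bound does force each to be $\le 1$; equivalently, GP2 implies GP1 for this particular pair (this is the content of the remark ``in many cases GP2 is equivalent to GP1'' and is carried out in \cite{AGS}). You should also phrase the passage from $\Hom_{\mathrm{GL}_n(F)}(\pi,\tau)$ to $\Hom_{H(F)}(\pi\boxtimes\widetilde{\tau},\cc)$ as an injection (via the canonical pairing $\tau\times\widetilde{\tau}\to\cc$; in the Archimedean case one uses the completed tensor product and admissibility), which is all the inequality needs; as an identification it is not literally correct.
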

\subsection{Notation}
$ $\\
\itemize{
\item Let $V:=V_n$ be the standard $n$-dimensional linear space defined over $F$.
\item Let $\sl(V)$ denote the Lie algebra of operators with zero
trace.
\item Denote $X:=X_n:=\sl(V_n) \times V_n \times V_n^*$.
\item Denote $G:=G_n:=\mathrm{GL}(V_n)$.
\item Denote $\g:=\g_n:=\Lie (G_n)=\mathrm{gl}(V_n)$.
\item Let $G_n$ act on $G_{n+1}$, $\g_{n+1}$ and on $\sl(V_n)$ by
$g(A):= gAg^{-1}$.
\item Let  $G$ act on $V \times V^*$ by $g(v,\phi):=(gv,(g^{-1})^{*}\phi)$. This gives rise to an action of $G$ on $X$.
\item Let $\sigma: X \to X$ be given by $\sigma(A,v,\phi)= A^t,\phi^t,v^t$.
\item  We fix the standard trace form on $\sl(V)$ and the standard form on $V \times V^*$.
\item Denote $S:= \{(A,v,\phi) \in X_n | A^n=0 \text{ and } \phi(A^i v)=0 \text{ for any } 0 \leq i \leq n\}$.
\item Note that $S \supset \Gamma(X)$.
\item Denote $S':= \{(A,v,\phi) \in S | A^{n-1} v=(A^*)^{n-1} \phi=0 \}$.
\item Denote
\begin{multline*}
\check{S} := \{((A_1,v_1,\phi_1),(A_2,v_2,\phi_2)) \in X
\times X \, | \, \forall i,j \in \{1,2\} \\
(A_i,v_j,\phi_j) \in S \text{ and } \forall \alpha \in
\mathrm{gl}(V), \alpha(A_1,v_1,\phi_1) \bot (A_2,v_2,\phi_2)\}.
\end{multline*}
\item Note that
\begin{multline*}
\check{S} = \{((A_1,v_1,\phi_1),(A_2,v_2,\phi_2)) \in X \times X \, | \, \forall i,j \in \{1,2\} \\
(A_i,v_j,\phi_j) \in S \text{ and } [A_1,A_2] + v_1 \otimes \phi_2-v_2 \otimes \phi_1 =0\}.
\end{multline*}
\item Denote $$\check{S}' := \{((A_1,v_1,\phi_1),(A_2,v_2,\phi_2)) \in \check{S}| \, \forall i,j \in \{1,2\} (A_i,v_j,\phi_j) \in S' \}.$$
}
\subsection{Reformulation}
$ $\\
A standard use of the Harish-Chandra descent method shows that it is
enough to show that any $G(F)$ invariant distribution on $X(F)$ is
invariant with respect to $\sigma$, moreover it is enough to show
this under the assumption that this is true for distributions on
$(X-S)(F)$. So it is enough to prove the following theorem

\begin{theorem} \label{special}
The action of $G$ on $X$ is special (and hence weakly linearly
tame).
\end{theorem}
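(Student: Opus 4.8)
The plan is to verify the definition of \emph{special} directly, by combining the structural results on weakly coisotropic varieties (Corollary \ref{non_disting_no_dists}) with the Homogeneity Theorem (Theorem \ref{ArchHom}), following the strategy that underlies Theorem \ref{neg_dist_def} but adapted to the particular representation $X = \sl(V_n) \times V_n \times V_n^*$. So let $\xi \in \Sc^*_{Q(X)}(\Gamma(X))^{G(F)}$ be such that for every $G$-invariant decomposition $Q(X) = W_1 \oplus W_2$ and every pair of $G$-invariant non-degenerate symmetric bilinear forms $B_i$ on $W_i$, the Fourier transform $\Fou_{B_i}(\xi)$ is again supported in $\Gamma(X)$; we must show $\xi = 0$. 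First I would record that $Q(X) = X$ here (the $G$-invariants in $\sl(V_n) \times V_n \times V_n^*$ vanish, since $G$ acts on $\sl(V_n)$ by conjugation with no nonzero fixed vectors and on $V_n \times V_n^*$ with no nonzero fixed vectors), so the statement is about distributions on all of $X(F)$.

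The key step is to stratify $\Gamma(X)$ into finitely many $G$-orbits (this finiteness is classical: nilpotent-type orbits for this representation are finite in number, since $\Gamma(X) \subset S$ and $S$ is cut out by the conditions $A^n = 0$, $\phi(A^i v) = 0$), and then to run a descending induction on the strata $\Gamma(X) = X_N \supset X_{N-1} \supset \cdots \supset X_0 = 0 \supset X_{-1} = \emptyset$, where each $X_i \setminus X_{i-1}$ is a single $G$-orbit open in $X_i$. Assuming $\Supp(\xi) \subset X_i$, I want to push the support into $X_{i-1}$. If the top stratum $O := X_i \setminus X_{i-1}$ is non-distinguished, this is immediate from Corollary \ref{non_disting_no_dists} applied with $\widehat{\xi}$ taken with respect to the standard form (which is $G$-invariant and for which $X \cong X^*$), since the hypothesis guarantees $\Supp(\widehat{\xi}) \subset \Gamma(X^*)$. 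If $O$ is distinguished, I would instead use the Homogeneity Theorem: apply Proposition \ref{strat} to reduce to showing $\Sc^*(O, \text{something})^{G(F),\chi}$-type vanishing, and use Frobenius reciprocity (Theorem \ref{Frob}) together with Theorem \ref{ArchHom} to bound the homogeneity degree of any distribution supported on the closure of a distinguished orbit that is also Fourier-invariant in the required sense. The point is that for distinguished orbits in $X$, a defect-type computation (analogous to Proposition \ref{neg_def_no_dist}, but now carried out for the reductive pair $(\mathrm{GL}_{n+1}, \mathrm{GL}_n)$ via the extra generator associated to the $V_n \times V_n^*$ factors) should show the relevant homogeneity degree cannot occur, forcing $\Supp(\xi) \subset X_{i-1}$.

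The main obstacle I expect is precisely the distinguished-orbit case: unlike the symmetric-pair setting of Theorem \ref{neg_dist_def}, here there is no ready-made ``negative distinguished defect'' lemma, so one must carry out the $sl_2$-representation-theoretic bookkeeping by hand for the representation $X = \sl(V_n) \oplus V_n \oplus V_n^*$, identifying the distinguished elements $(A, v, \phi)$ (these will be the ones where $A$ is a regular nilpotent in the appropriate parabolic and $v, \phi$ are suitably generic, corresponding to the ``$S'$'' locus), and checking that for each such element the weight of the character by which $F^\times$ acts on any admissible $\xi$ is incompatible with both exponents $\frac{\dim X}{2}$ and $\frac{\dim X}{2} + 1$ allowed by Theorem \ref{ArchHom}. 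Once that computation is in place, the descending induction closes and $\Supp(\xi) \subset X_0 = \{0\}$; but a $G(F)$-invariant distribution supported at $0$ whose Fourier transform is also supported at $\Gamma(X)$ and hence at $0$ must be a linear combination of derivatives of delta, and the homogeneity constraint then kills it, giving $\xi = 0$. Finally, I would remark that ``special $\Rightarrow$ weakly linearly tame'' is Proposition \ref{SpecWeakTameAct}, so the parenthetical claim is automatic.
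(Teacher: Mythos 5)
Your reduction of the problem to killing a $G(F)$-invariant $\xi$ supported in $\Gamma(X)$ with all admissible Fourier transforms also supported there is fine, and the non-distinguished strata can indeed be handled by Corollary \ref{non_disting_no_dists} (this mirrors the proof of Theorem \ref{neg_dist_def}). But the proposal has a genuine gap exactly where you flag ``the main obstacle'': the distinguished locus. You propose to exclude it by Theorem \ref{ArchHom} plus a ``defect-type computation analogous to Proposition \ref{neg_def_no_dist}'', but that machinery (graded $sl_2$-triples, defects) is developed only for symmetric pairs, i.e.\ for $\h$ acting on $\gd$ inside a $\Z/2\Z$-graded Lie algebra; $X=\sl(V)\times V\times V^*$ with the $\mathrm{GL}(V)$-action is not of this form, and no substitute bookkeeping is given. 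More importantly, there is no reason to expect homogeneity to close this case at all: the paper's own proof does \emph{not} dispose of the distinguished (regular nilpotent $A$) locus by homogeneity. Homogeneity (with Propositions \ref{strat}, \ref{Arch_strat}, Theorem \ref{Frob}) is used only for the preliminary reduction of the support to $S'$; the distinguished locus is then handled by the singular support: properties (2)--(3) give $SS(\xi)\subset \check{S}'$, property (4') says $SS(\xi)$ is weakly coisotropic, and the geometric statement (Theorem \ref{Geom}), proved via Proposition \ref{GeoFrob} and the Key Lemma's explicit computation with $M=v_1\otimes\phi_2-v_2\otimes\phi_1$ and $[A,B]=M$, shows $\check{S}'$ contains no non-empty weakly coisotropic subvariety. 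If the distinguished case could be settled by a homogeneity-degree mismatch, the wave-front/singular-support apparatus that is the point of this paper (and the integrability theorem in the Archimedean proofs) would be superfluous here; that step is precisely where all known proofs need a stronger tool.

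Two further points. First, your induction and the very notion of ``distinguished'' (as defined in subsection \ref{nondist}) presuppose that $\Gamma(X)$ is a finite union of $G$-orbits; you call this classical, but for triples $(A,v,\phi)\in\sl(V)\times V\times V^*$ this is not obvious and is not established (nor needed) in the paper, which instead stratifies only by the coarse dichotomy $A^{n-1}=0$ versus $A$ regular and then restricts to a fiber $R_A$ via Proposition \ref{GeoFrob}. Second, even granting finiteness, you would need to identify the distinguished orbits and compute the eigencharacter of the homothety action on candidate distributions orbit by orbit; none of this is carried out. So the proposal is not a proof: it substitutes for the paper's geometric statement an unproved (and implausible) homogeneity claim at exactly the critical point. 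The parenthetical implication ``special $\Rightarrow$ weakly linearly tame'' via Proposition \ref{SpecWeakTameAct} is of course correct.
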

\begin{remark}
One can show that this implies that the action of $G_n$ on
$G_{n+1}$ is tame.
\end{remark}

\subsection{Proof of Theorem \ref{special}}

It is enough to show that any distribution $\xi \in
\Sc^*(X(F))^{G(F)}$, such that $\xi, \, \Fou_{V \times V^*}(\xi), \,
\Fou_{sl(V)}(\xi)$ and $\Fou_{X}(\xi)$ are supported on $S(F)$, is
zero.

\begin{lemma}
Let $\xi \in \Sc^*(X(F))^{G(F)}$ such that both $\xi$ and $\Fou_{V \times V^*}(\xi)$ are supported on $S(F).$ Then $\xi$ is supported on $S'(F).$
\end{lemma}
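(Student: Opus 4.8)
The plan is to use the properties of the singular support, in particular the Fourier property \ref{Fou} and the weak-coisotropy property (4$'$), together with the geometric structure of $S$ and $S'$. First I would recall that $\xi$ is supported on $S(F)$ and $\Fou_{V\times V^*}(\xi)$ is supported on $S(F)$, so by property \ref{Fou} (applied with $V$ replaced by the linear part $V\times V^*$ of $X$, holding $\sl(V)$ fixed) we obtain that $SS(\xi)$ is contained in $F_{V\times V^*}(p_{X}^{-1}(S))$, while property \ref{Supp2SS} gives $p_X(SS(\xi))\subset S$. Thus over each point $(A,v,\phi)\in S$ the fiber of $SS(\xi)$ is constrained: it consists of covectors $(B,w,\psi)$ (with $B$ in the cotangent direction of $\sl(V)$ and $(w,\psi)$ in the cotangent direction of $V\times V^*\cong V^*\times V$) such that the point obtained by applying the partial Fourier flip in the $V\times V^*$ variables again lies over $S$. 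Concretely this says $(A,\psi^\vee,w^\vee)$ — the point with the $V,V^*$ coordinates replaced by the cotangent coordinates — also lies in $S$, i.e. satisfies the equations $\phi(A^i v)=0$ type relations for those new vectors.

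Next I would combine this with property (4$'$): $SS(\xi)$ is weakly coisotropic, hence by the dimension remark every nonempty weakly coisotropic subvariety of $T^*X$ has dimension $\ge \dim X$, and more usefully, by the characterization (iv) of weak coisotropy, over a smooth point $x$ of $p_X(SS(\xi))$ the fiber $p_X^{-1}(x)\cap SS(\xi)$ is invariant under shifts by the conormal space $CN^X_{p_X(SS(\xi)),x}$. The strategy is then: suppose for contradiction that $\Supp(\xi)$ is not contained in $S'(F)$; then $p_X(SS(\xi))$ has a component $Z$ meeting $S\setminus S'$, i.e. a component where $A^{n-1}v\neq 0$ or $(A^*)^{n-1}\phi\neq 0$. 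Stratifying $S$ by $G$-orbits (or by the locally closed pieces on which these vanishing conditions are constant) and using Proposition \ref{strat} together with Theorem \ref{Frob}, it suffices to derive a contradiction on a single such stratum. On that stratum I would compute the conormal space $CN^X_{Z,x}$ explicitly from the defining equations of $S$, and then check that the shift-invariance forced by (4$'$), combined with the Fourier constraint on the fiber derived above, is incompatible with the fiber being nonempty unless already $A^{n-1}v=(A^*)^{n-1}\phi=0$. In effect: if $A^{n-1}v\neq0$, the conormal directions to the "bad" stratum move the point out of the Fourier-transformed copy of $S$, contradicting that both $\xi$ and $\Fou_{V\times V^*}(\xi)$ live on $S$.

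The main obstacle I expect is the explicit linear-algebra bookkeeping in the previous paragraph: identifying $p_X(SS(\xi))$'s relevant strata inside $S$, writing down the conormal bundle $CN^X_{\text{stratum}}$ in terms of the operators $A$ and the vectors $v,\phi$, and verifying the incompatibility — this is the kind of $\sl_n$-module computation (using the $sl_2$-triple through $A$, the decomposition of $\g$ and of $V$, and the pairing $\phi(A^iv)$) that lies at the technical heart of the $GL_n$ multiplicity-one argument. A secondary, more bureaucratic point is to make sure the Fourier property \ref{Fou} is applied to the correct partial variable: here $X = \sl(V)\times(V\times V^*)$, $V\times V^*$ carries its standard symplectic-type pairing (so $\Fou_{V\times V^*}$ uses the bilinear form identifying $V\times V^*$ with its dual $V^*\times V$), and $Z = S$ is indeed closed and invariant under homotheties in the $V\times V^*$ factor — these hypotheses need to be checked, but they are routine given the explicit form of $S$. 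Once the stratumwise contradiction is in place, Proposition \ref{strat} assembles it into the statement that $\Supp(\xi)\subset S'(F)$.
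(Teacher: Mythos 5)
Your plan cannot be completed as stated: the step you defer as ``linear-algebra bookkeeping'' is not bookkeeping but an actual impossibility, because the constraints you allow yourself --- property \ref{Supp2SS}, the $G$-invariance property \ref{Ginv}, property \ref{Fou} applied to the partial Fourier transform $\Fou_{V\times V^*}$, and weak coisotropy (4$'$) --- do not force $\Supp(\xi)\subset S'(F)$. Concretely, let $\mathcal{N}\subset\sl(V)$ be the nilpotent cone and $Z:=\mathcal{N}\times V\times\{0\}\subset X$; the $G$-orbit of $(A_0,v_0,0)$ with $A_0$ regular nilpotent and $v_0$ cyclic is open dense in $Z$, and the closure of the conormal bundle $CN_Z^X\subset T^*X$ is Lagrangian, hence weakly coisotropic and of dimension $\dim X$. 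At such a generic point the fiber of $CN_Z^X$ consists of covectors $(p(A_0),0,w)$ with $p$ a polynomial without constant term and $w\in V$ arbitrary; these satisfy the condition of \ref{Ginv} (since $[A_0,p(A_0)]=0$ and both $\phi$-type components vanish), the constraint coming from \ref{Fou} for $\Fou_{V\times V^*}$ (one of the two components of the dual vector is zero, so the $S$-type equations $\phi'(A_0^iv')=0$ hold trivially), and $Z\subset S$. Yet $Z\not\subset S'$, since $A_0^{n-1}v_0\neq 0$. So there is no contradiction to be extracted at the level of singular supports: the weakly coisotropic machinery with only the partial Fourier transform in $V\times V^*$ cannot distinguish $S$ from $S'$, and your intended ``incompatibility'' does not exist.

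The missing idea is the paper's key (and elementary) geometric observation that $S\setminus S'\subset \sl(V)\times\bigl((V\times 0)\cup(0\times V^*)\bigr)$: if $A^{n-1}v\neq 0$ then $v$ is cyclic for $A$ and the equations $\phi(A^iv)=0$ force $\phi=0$, and dually for $(A^*)^{n-1}\phi\neq 0$. Given this, the paper's proof is analytic rather than geometric: one stratifies (Propositions \ref{strat} and \ref{Arch_strat}, the latter needed for the Archimedean case with the symmetric powers of the conormal bundle), descends via Frobenius reciprocity (Theorem \ref{Frob}), and is reduced to killing equivariant distributions on the $V\times V^*$ factor supported in $(V\times 0)\cup(0\times V^*)\subset Z(B)$ whose Fourier transform has the same support property; the Homogeneity Theorem \ref{ArchHom} then yields a homogeneity degree incompatible with such small support --- this is the ``direct computation'' the paper invokes. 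The singular support and weak coisotropy enter only afterwards, in the corollary $SS(\xi)\subset\check{S}'$ and in the geometric statement, which is proved for $\check{S}'$ (with the primed condition already built in), precisely because the present lemma must be established by these non-geometric means first.
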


\begin{proof}
This is a direct computation using Propositions \ref{strat},
\ref{Arch_strat} , Theorem \ref{Frob} and Theorem \ref{ArchHom}, and the fact that $S-S' \subset sl(V) \times (V \times 0 \cup 0 \times V^*)$.
\end{proof}
\begin{corollary}
Let $\xi \in \Sc^*(X(F))^{G(F)}$ such that  $\xi, \Fou_{V \times
V^*}(\xi), \Fou_{sl(V)}(\xi)$ and $\Fou_{X}(\xi)$ are supported on
$S(F)$ then $SS(\xi) \subset \check{S}'$.
\end{corollary}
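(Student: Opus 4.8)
The plan is to combine the previous lemma with the four properties of singular support listed in Section~\ref{SS}, applied to the distribution $\xi$ and its partial Fourier transforms. First I would record that, by the previous lemma applied to $\xi$ and to each of its relevant partial Fourier transforms, each of the distributions $\xi,\, \Fou_{V\times V^*}(\xi),\, \Fou_{sl(V)}(\xi),\, \Fou_{X}(\xi)$ is supported on $S'(F)$ (using that each of these is again $G(F)$-invariant by the $G$-invariance of the relevant bilinear forms, and that the partial Fourier transforms of these distributions are, up to reordering, again among the four listed distributions). Then I would apply property \ref{Supp2SS} to get $p_X(SS(\xi)) \subset \overline{S'}_{Zar}$, and property \ref{Ginv} to get $SS(\xi) \subset \{(w,\phi)\in T^*X \mid \phi(\g w) = 0\}$; writing $X = sl(V)\times V \times V^*$ and identifying $T^*X \cong X \times X$ via the fixed forms, these two containments together say that $SS(\xi) \subset \check{S}$ by the second (computational) description of $\check S$ given in the Notation subsection.

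Next I would bring in property \ref{Fou} to refine $\check S$ to $\check S'$. The point is that $\check{S}' = \check S \cap (S' \times S')$, so it suffices to upgrade the first-coordinate and second-coordinate constraints from membership in $S$ to membership in $S'$. The first-coordinate constraint $p_X(SS(\xi)) \subset \overline{S'}_{Zar}$ is already in hand from the previous paragraph. For the second coordinate I would use property \ref{Fou}: applying it with the closed homothety-invariant subvariety $Z = S' \subset X$ (playing the role of $X \times V$ with $V$ the full space $X$) and with the distribution $\Fou_X(\xi)$, which is supported on $S'(F)$ by the first paragraph, yields $SS(\xi) = SS(\Fou_X(\Fou_X(\xi)))$ (up to the harmless sign from $\Fou_X^2$) contained in $F_X(p_X^{-1}(S'))$, i.e.\ the second coordinate lies in $S'$ as well. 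Intersecting the two containments gives $SS(\xi) \subset \check S \cap (S'\times S') = \check S'$.

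The main obstacle I expect is bookkeeping rather than anything deep: one must be careful that applying the previous lemma and property \ref{Fou} to the various partial Fourier transforms really does produce distributions supported in $S'$ in \emph{both} slots, and that the identification $T^*X \cong X \times X$ is compatible with the maps $F_V$, $\Fou_V$ and the Lie-algebra annihilator condition so that the three descriptions glue to exactly $\check S'$. Concretely, the subtle step is checking that $\Fou_{sl(V)}(\xi)$ and $\Fou_{V\times V^*}(\xi)$ are themselves covered by the hypothesis of the previous lemma (so that they are supported on $S'$), and that the partial Fourier transform in the remaining variables of each of these is again one of the four distributions in the hypothesis. Once this symmetry is laid out, the inclusion $SS(\xi)\subset\check S'$ follows by intersecting the constraint from property \ref{Ginv} (giving the bracket relation, hence $\check S$) with the support constraints in each slot (giving $S'\times S'$), and no further computation is needed.
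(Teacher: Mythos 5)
There is a genuine gap, and it comes from misreading the definitions of $\check{S}$ and $\check{S}'$. Both sets impose the membership conditions $(A_i,v_j,\phi_j)\in S$ (respectively $S'$) for \emph{all four} pairs $(i,j)\in\{1,2\}^2$, including the mixed ones $(A_1,v_2,\phi_2)$ and $(A_2,v_1,\phi_1)$; the two displayed descriptions of $\check{S}$ in the paper differ only in how the orthogonality condition is rewritten as the bracket relation, not in dropping the mixed conditions. Consequently your identity $\check{S}'=\check{S}\cap(S'\times S')$ is false, and your first step --- that properties \ref{Supp2SS} and \ref{Ginv} alone already give $SS(\xi)\subset\check{S}$ --- is also unjustified: those two properties only give the base condition $(A_1,v_1,\phi_1)\in S'$ together with $[A_1,A_2]+v_1\otimes\phi_2-v_2\otimes\phi_1=0$, and property \ref{Fou} applied with the full space $X$ only adds $(A_2,v_2,\phi_2)\in S'$. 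The two mixed conditions are never established in your argument and are not formal consequences of what you do prove; they are also exactly what is needed downstream, e.g. the inclusion $R_A\subset Q_A\times Q_A$ in the Key Lemma rests on $A^{n-1}v_2=(A^*)^{n-1}\phi_2=0$, which is the mixed condition $(A_1,v_2,\phi_2)\in S'$.

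The repair uses precisely the fact you flagged as ``bookkeeping'' but then never exploited: by the previous lemma, each of $\Fou_{V\times V^*}(\xi)$ and $\Fou_{sl(V)}(\xi)$ is itself supported on $S'(F)$, since its partial Fourier transform in the complementary variables is again one of the four distributions in the hypothesis. Now apply property \ref{Fou} three times with three different choices of the linear factor: with the factor $V\times V^*$ to the distribution $\Fou_{V\times V^*}(\xi)$ (note $S'$ is invariant under homotheties of $V\times V^*$), which forces $(A_1,v_2,\phi_2)\in S'$ on $SS(\xi)$; with the factor $sl(V)$ to $\Fou_{sl(V)}(\xi)$, which forces $(A_2,v_1,\phi_1)\in S'$; and with all of $X$ to $\Fou_X(\xi)$, which forces $(A_2,v_2,\phi_2)\in S'$ (signs are harmless since $S'$ is a cone in each variable). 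Combined with property \ref{Supp2SS}, which gives $(A_1,v_1,\phi_1)\in S'$ because $S'$ is Zariski closed, and property \ref{Ginv}, which gives the orthogonality condition, this yields every defining condition of $\check{S}'$; this is the argument the corollary is implicitly invoking, and without the two extra applications of property \ref{Fou} to the partial Fourier transforms your proof only reaches a strictly larger set than $\check{S}'$.
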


Now the following geometric statement implies Theorem
\ref{special}.
\begin{theorem} [The geometric statement]\label{Geom}
There are no non-empty $X \times X$-weakly coisotropic
subvarieties of $\check{S}'$.
\end{theorem}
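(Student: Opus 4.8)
The plan is to reduce the geometric statement to the analogous "non-weakly-coisotropic" statements that were already established (or are routine) for the pieces of $\check S'$, using the restriction and Frobenius-type tools of Section \ref{coisotropic}. Concretely, I would first observe that $\check S'$ fibers over the variety parametrizing the nilpotent operator $A_1$: projecting $((A_1,v_1,\phi_1),(A_2,v_2,\phi_2))$ to $A_1$, and noting that the condition $A_1^{n-1}v_1 = (A_1^*)^{n-1}\phi_1 = 0$ together with the nilpotency and orthogonality constraints forces a rigid structure, one sees that over the open stratum where $A_1$ has a single Jordan block of size $n$ the fiber is very small — in fact the constraints $(A_1,v_1,\phi_1)\in S'$, $[A_1,A_2]+v_1\otimes\phi_2-v_2\otimes\phi_1 = 0$, and $(A_1,v_2,\phi_2),(A_2,v_1,\phi_1),(A_2,v_2,\phi_2)\in S$ cut the fiber down to dimension strictly less than $\dim X = n^2+2n-1$, which already violates the dimension bound (every non-empty $T^*Y$-weakly coisotropic variety has dimension $\geq \dim Y$). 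So on that open piece there is nothing weakly coisotropic.

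Next I would stratify $\check S'$ by the Jordan type of $A_1$ (and symmetrically $A_2$), and on each stratum apply Corollary \ref{GeoFrob}: the group $G$ acts on $X\times X$, the map to the (finite) set of nilpotent $G$-orbits of the $A_i$'s plays the role of $q: Y\to B$, and $\check S'$ is $G$-invariant with $p$-projection landing in the preimage of the orbit closures. By Corollary \ref{PreGeoFrob} and Corollary \ref{GeoFrob}, it suffices to show that for each relevant semisimplification datum the restriction of $\check S'$ to the corresponding fiber has no non-empty weakly coisotropic subvariety; and the fibers are themselves of the form $\check S'$ for smaller $n$ (or degenerate versions thereof), so an induction on $n$ takes over, with the base case $n=1$ being a direct inspection. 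The key structural input is that the "two-sided" orthogonality condition defining $\check S$ (equivalently $[A_1,A_2]+v_1\otimes\phi_2-v_2\otimes\phi_1 = 0$) is exactly the condition $v(\g x)=0$ appearing in property \ref{Ginv} of the singular support, so the geometric statement is precisely the assertion that the relevant invariant distribution has no weakly coisotropic singular support beyond the one forced by $G$-invariance and the support conditions.

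I expect the main obstacle to be the dimension/rigidity bookkeeping on each Jordan stratum: one must verify that, after imposing the conormal-orthogonality relation between the two points, the fiber $p_Y^{-1}(x)\cap \check S'$ fails to be locally invariant under shifts by $CN^X_{p_Y(\check S'),x}$ — i.e. one must produce, for a generic point, a conormal direction that moves the point out of $\check S'$. This is where the definition of $S'$ (the extra vanishing $A^{n-1}v = (A^*)^{n-1}\phi = 0$) is essential: it is precisely the sharpening that removes the "large" components of $\check S$ which would otherwise be genuinely weakly coisotropic, and checking that it does so on every stratum is the technical heart. I would organize this as a lemma computing $CN^X_{S',x}$ explicitly (as was done for $S$ in the cited earlier work), then match it against the tangent space of the fiber of $\check S'$ and read off the failure of invariance; everything else is formal via the descent corollaries and induction on $n$.
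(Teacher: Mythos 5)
Your plan omits the actual heart of the paper's proof, and the shortcut you substitute for it does not work. On the locus where $A_1$ is a single Jordan block you claim that the fiber over a fixed $A_1$ has dimension less than $\dim X=n^2+2n-1$ and that this ``already violates the dimension bound''. But the bound (a non-empty weakly coisotropic subvariety of $T^*X\cong X\times X$ has dimension at least $\dim X$) applies to subvarieties that spread along the whole $G$-orbit of $A_1$, not to a single fiber: the regular-nilpotent stratum of $\check S'$ has dimension up to $\dim(G\cdot A)+\dim(\text{fiber})$, and since the $(v_j,\phi_j)$-part of the fiber can be $2n$-dimensional and the $A_2$-part is a coset of the $n$-dimensional centralizer of $A$, this sum can reach $\dim X$; no contradiction with the bound arises. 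The same problem persists after the (correct) reduction via Corollary \ref{GeoFrob} to the fiber $\{A\}\times V\times V^*$: the restricted variety $R_A$ lies in $Q_A\times Q_A$, whose dimension is exactly $2n=\dim(\{A\}\times V\times V^*)$, i.e.\ exactly the threshold, so a raw dimension count can never conclude. What is needed, and what the paper supplies, is (a) that any weakly coisotropic subvariety of $Q_A\times Q_A$ must lie in the diagonal components $L_{ii}=\mathrm{Ker}\,A^i\times\mathrm{Ker}\,(A^*)^{n-i}\times\mathrm{Ker}\,A^i\times\mathrm{Ker}\,(A^*)^{n-i}$, each of dimension exactly $2n$, and (b) the explicit polynomial $f(v_1,\phi_1,v_2,\phi_2)=(v_1)_i(\phi_2)_{i+1}-(v_2)_i(\phi_1)_{i+1}$ vanishes on $R_A\cap L_{ii}$, forcing $\dim(R_A\cap L_{ii})<2n$; the vanishing uses the equation $[A_1,A_2]+v_1\otimes\phi_2-v_2\otimes\phi_1=0$ together with nilpotency of $A_2$ (any $B$ with $[A,B]=M$ is upper triangular, so a nilpotent one is strictly upper triangular, whence $M_{i,i+1}=0$). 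You yourself flag ``producing a conormal direction that moves the point out of $\check S'$'' as the technical heart, but you leave it as an expectation; without (a) and (b) there is no proof.

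Your treatment of the non-regular locus is also unjustified and differs from the paper. The paper handles $\check S''=\{A_1^{n-1}=0\}$ in one stroke: by Example \ref{group_case} such $(A_1,v_1,\phi_1)$ are non-distinguished, and Proposition \ref{non_disting_no_cois} then excludes weakly coisotropic subvarieties; no induction on $n$ occurs. Your proposed induction -- stratify by Jordan type and assert that the restricted varieties are ``of the form $\check S'$ for smaller $n$'' -- is not established and is doubtful: the restriction over a non-regular nilpotent orbit is governed by the centralizer of that nilpotent, which is not $GL_m$ acting on $X_m$, and the conditions $A^{n-1}v=(A^*)^{n-1}\phi=0$ do not turn into the corresponding conditions for a smaller $n$, so the inductive hypothesis does not apply as stated. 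Both halves of the plan therefore need to be replaced: the non-regular half by the non-distinguished-orbit argument (or a genuine proof of your descent step), and the regular half by the $L_{ii}$ classification and the polynomial computation.
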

\subsection{Proof of the geometric statement}
\begin{notation}
Denote $\check{S}'':=\{((A_1,v_1,\phi_1),(A_2,v_2,\phi_2)) \in \check{S}'| A_1^{n-1}=0\}$.
\end{notation}
By Theorem \ref{non_disting_no_cois} (and Example \ref{group_case}) there are no non-empty $X
\times X$-weakly coisotropic subvarieties of $\check{S}''$.
Therefore it is enough to prove the following Key proposition.
\begin{proposition}[Key proposition]
There are no non-empty $X \times X$-weakly coisotropic
subvarieties of $\check{S}'- \check{S}''.$
\end{proposition}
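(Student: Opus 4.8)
The plan is to analyze a point $((A_1,v_1,\phi_1),(A_2,v_2,\phi_2)) \in \check{S}' - \check{S}''$ and exploit the fact that $A_1^{n-1} \neq 0$, so that $A_1$ is a regular nilpotent operator on $V_n$ (since $A_1^n = 0$ and $A_1^{n-1}\neq 0$). This is the crucial rigidity: up to conjugation we may bring $A_1$ to the standard Jordan block, and then the centralizer of $A_1$ in $\g = \mathrm{gl}(V_n)$ is the $n$-dimensional commutative algebra $F[A_1]$. First I would use the defining relations of $\check{S}$, in the form $[A_1,A_2] + v_1\otimes\phi_2 - v_2\otimes\phi_1 = 0$, together with $A_1^{n-1}v_1 = 0$, $(A_1^*)^{n-1}\phi_1 = 0$ and the chain conditions $\phi_j(A_i^k v_j) = 0$, to pin down $A_2, v_2, \phi_2$ in terms of $A_1, v_1, \phi_1$ as tightly as possible. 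Because $A_1$ is regular, $A_1^{n-1}v_1 = 0$ forces $v_1$ into the span of $A_1 V_n$ (indeed $v_1 \in \operatorname{Im}(A_1)$ since the only vector killed by $A_1^{n-1}$ up to scalar... one must be careful: $\ker A_1^{n-1}$ is a hyperplane, so this is the condition $v_1 \perp$ (highest weight covector); similarly for $\phi_1$).

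Next I would identify the variety $\check{S}' - \check{S}''$, or rather a Zariski-dense $G$-stable open piece of it where $A_1$ is regular nilpotent, as (an open subset of) a homogeneous-type space, and compute its tangent spaces explicitly at a convenient base point $(A_1 = \text{standard Jordan block}, v_1, \phi_1)$. The goal is to show the dimension is too small, or the conormal structure is wrong, for any component to be $X\times X$-weakly coisotropic — recall from Section \ref{coisotropic} that a nonempty $T^*Z$-weakly coisotropic subvariety of $T^*(X)$ must have dimension at least $\dim X = \dim X_n = n^2 - 1 + 2n = (n+1)^2 - 2$. So the cleanest route is a dimension count: show $\dim(\check{S}' - \check{S}'') < \dim X_n$, hence a fortiori no weakly coisotropic subvariety can sit inside its closure minus the lower part — but one must be slightly careful, since weakly coisotropic subvarieties of $\check S'$ could in principle meet both $\check S''$ and its complement, which is exactly why the argument is split. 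I would instead argue fiberwise: for $x = (A_1,v_1,\phi_1)$ with $A_1$ regular, the fiber $p_X^{-1}(x) \cap \check{S}'$ inside $p_X^{-1}(x) \cong X^*$ is cut out by the $\g$-perpendicularity condition, which says $(A_2,v_2,\phi_2) \perp \g\cdot x$; since $A_1$ is regular the orbit direction $\g\cdot x$ is large ($\dim \g x = \dim \g - \dim \g_x$, and $\g_x = F[A_1]$ is only $n$-dimensional on the $\sl$-part), so this fiber is small, and then I check it cannot be invariant under shifts by $CN^X_{p_X(\check S'),x}$ unless that conormal space is $0$, contradicting the dimension bound — i.e. I verify condition (iv) in the definition of weakly coisotropic fails.

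The main obstacle I expect is the bookkeeping in the last step: one has to simultaneously control (a) the image $p_X(\check{S}' - \check{S}'')$ and its conormal bundle at a regular-nilpotent point, and (b) the fiber $p_X^{-1}(x)\cap(\check S' - \check S'')$, and show the fiber is not stable under translation by that conormal space. Concretely this means writing $A_2 = [\text{something}] + (\text{correction from } v,\phi)$ using $\operatorname{ad}(A_1)$, whose image is a complement to $F[A_1]$, and tracking how the rank-one terms $v_1\otimes\phi_2, v_2\otimes\phi_1$ interact with the commutator — the genuinely non-formal point is that regularity of $A_1$ makes $\operatorname{ad}(A_1)$ as surjective as possible, which is what ultimately forces the fiber to be too rigid. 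If the direct computation proves unwieldy, the fallback is to further stratify $\check S' - \check S''$ by the ranks of $v_1$ and $\phi_1$ relative to the Jordan structure of $A_1$ (there are only a bounded number of strata, each a single $G$-orbit or a small family over one), apply Corollary \ref{GeoFrob} to each stratum, and reduce to a finite check. Either way the heart of the matter is: \emph{$A_1$ regular nilpotent $\Rightarrow$ everything else is essentially determined $\Rightarrow$ the piece is too small to be weakly coisotropic.}
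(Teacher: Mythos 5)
Your opening moves coincide with the paper's: off $\check{S}''$ the matrix $A_1$ is a regular nilpotent, its centralizer is $F[A_1]$, and (in your ``fallback'') one restricts via Corollary \ref{GeoFrob} to the fiber over a fixed Jordan block $A$. But the heart of the argument is missing, and the two concrete routes you offer do not close it. The ``cleanest route'', a global dimension count $\dim(\check{S}'-\check{S}'')<\dim X$, is exactly borderline and is never established: the regular nilpotent orbit has dimension $n^2-n$; over a fixed Jordan block $A$ the $(v_1,\phi_1,v_2,\phi_2)$--part lies in $\bigcup_{i,j}\ker A^i\times\ker (A^*)^{n-i}\times\ker A^j\times\ker (A^*)^{n-j}$, which has dimension $2n$; and the remaining freedom in $A_2$ is contained in a coset of the $(n-1)$--dimensional centralizer direction. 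These add up to $(n^2-n)+2n+(n-1)=n^2+2n-1=\dim X$, so you must gain at least one dimension somewhere, and producing that gain is precisely the non-formal content of the proposition --- it cannot be deferred as ``bookkeeping''. Your fiberwise route contains a misconception: the condition $(A_2,v_2,\phi_2)\perp\g x$ alone cuts out the fiber of the conormal variety to the orbit of $x$, which \emph{is} weakly coisotropic (its fibers are exactly invariant under the shifts in condition (iv)), so ``$\g x$ is large, hence the fiber is small and rigid'' cannot by itself make (iv) fail. What destroys the shift-invariance are the conditions on the covector side, above all the nilpotency of $A_2$ coming from $(A_2,v_j,\phi_j)\in S'$, and your plan never extracts anything quantitative from them.

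For comparison, the paper's mechanism after restricting to $R_A$ is: any weakly coisotropic subvariety of $Q_A\times Q_A$ must lie in the diagonal pieces $L_{ii}=\ker A^i\times\ker (A^*)^{n-i}\times\ker A^i\times\ker (A^*)^{n-i}$, and on $L_{ii}$ the function $f=(v_1)_i(\phi_2)_{i+1}-(v_2)_i(\phi_1)_{i+1}$ vanishes identically on $R_A$, because $f$ is the $(i,i+1)$ entry of $M=v_1\otimes\phi_2-v_2\otimes\phi_1$, every solution $B$ of $[A,B]=M$ is upper triangular, and one solution (namely $-A_2$) is nilpotent, hence strictly upper triangular, forcing $M_{i,i+1}=0$. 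This gives $\dim(R_A\cap L_{ii})<2n$, which is what rules out weakly coisotropic subvarieties; note it is \emph{not} a bound on $\dim R_A$ itself, so even a correct global dimension count is not obviously available. Some version of this interplay --- confinement to the diagonal kernel-flag pieces plus a one-dimension cut exploiting the nilpotency of $A_2$ --- is the step your proposal lacks.
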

\begin{notation}
Let $A \in sl(V)$ be a nilpotent Jordan block. Denote $$R_A := (\check{S}'-\check{S}'')|_{\{A\} \times V \times V^*}.$$
\end{notation}
By Proposition \ref{GeoFrob} the Key proposition follows from the
following Key Lemma.
\begin{lemma}[Key Lemma]
There are no non-empty $V \times V^* \times V \times V^*$-weakly
coisotropic subvarieties of $R_A$.
\end{lemma}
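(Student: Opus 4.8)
The plan is to make everything completely explicit. First I would fix the nilpotent Jordan block $A \in sl(V)$ of size $n$ and work out explicitly what $R_A$ is. By definition $R_A$ consists of pairs $((A,v_1,\phi_1),(A_2,v_2,\phi_2))$ lying in $\check S' - \check S''$ with first component of the form $\{A\}\times V \times V^*$; the conditions $(A_i,v_j,\phi_j)\in S'$, the commutator relation $[A,A_2]+v_1\otimes\phi_2 - v_2\otimes\phi_1 = 0$, and the nilpotency conditions $\phi_j(A^i v_k)=0$, $A^{n-1}v_j = (A^*)^{n-1}\phi_j = 0$ cut out a concrete affine variety in the coordinates $v_1,\phi_1,A_2,v_2,\phi_2$. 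Since $A$ is a single Jordan block, its centralizer in $gl(V)$ is the $n$-dimensional algebra $F[A]$, and the nilpotency conditions on the vectors force $v_j$ to lie in $\operatorname{span}(e_1,\dots,e_{n-1})$ (in the standard basis where $A e_i = e_{i-1}$) and $\phi_j$ in the dual subspace; the condition defining $\check S' - \check S''$ (i.e. $A^{n-1}=0$ but we are \emph{not} in $\check S''$) is what keeps us in the relevant stratum. The first real step is thus a normal-form computation identifying $R_A$, or at least $p_{V\times V^*\times V\times V^*}(R_A)$ and the fibers, concretely enough to test the weakly-coisotropic condition.

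The second step is to use the characterization (iv) of $T^*Z$-weakly coisotropic from Section 3: a subvariety $T$ of $T^*Z$ (here $Z = V\times V^*$, so $T^*Z = V\times V^*\times V\times V^*$ with its standard symplectic form) is weakly coisotropic iff for every smooth point $x$ of the base projection $p_Z(T)$, the fiber over $x$ is locally invariant under translations by $CN_{p_Z(T),x}^Z$. So I would compute the base $B := p_Z(R_A)$, its tangent/conormal spaces at a generic smooth point, and then check whether the fiber $p_Z^{-1}(x)\cap R_A$ can be translation-invariant under that conormal space. The expected mechanism for the \emph{non}-existence of any weakly coisotropic subvariety is a dimension count combined with the rigidity coming from the commutator equation: a weakly coisotropic subvariety of $T^*Z$ must have dimension $\ge \dim Z = \dim(V\times V^*) = 2n$, and the commutator relation $[A,A_2] = v_2\otimes\phi_1 - v_1\otimes\phi_2$ is very restrictive — given $v_1,\phi_1,v_2,\phi_2$ in the allowed (small) subspaces, it pins down $A_2$ up to $F[A]$, and the rank-$\le 2$ condition on the right-hand side together with $[A,\cdot]$ having image in the strictly-nilpotent part forces the vectors into even smaller loci. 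I would show $\dim R_A$ is too small, or more precisely that along every irreducible component the fiber of $p_Z$ fails the translation-invariance of (iv), so no weakly coisotropic subvariety fits inside.

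The main obstacle I expect is the bookkeeping in the normal-form step: untangling the simultaneous conditions "$(A_i,v_j,\phi_j)\in S'$ for all four cross-pairs" is combinatorially fiddly, and one has to be careful that passing to the single-Jordan-block slice $\{A\}\times V\times V^*$ via Corollary \ref{GeoFrob} (Proposition \ref{GeoFrob}) has correctly reduced the general nilpotent case to this one — this is exactly what the reduction to $\check S' - \check S''$ and the definition of $R_A$ are arranged to do, via the geometric Frobenius of Corollary \ref{PreGeoFrob}. The delicate point is that "weakly coisotropic" is only a condition at \emph{generic} smooth points of each component, so I must make sure the genericity is handled: stratify $R_A$, and on each stratum either exhibit a point where condition (iv) visibly fails (the fiber is not translation-invariant under the conormal of the base) or show the stratum has dimension $< 2n = \dim(V\times V^*)$. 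Once $R_A$ is pinned down explicitly this becomes a finite check depending only on the discrete data (the Jordan type, here a single block), and the commutator equation does the heavy lifting; but getting that explicit description right is where the work is.
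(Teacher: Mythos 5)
Your skeleton — restrict to the Jordan block slice, stratify by the kernel filtration, use that a non-empty weakly coisotropic subvariety of $T^*(V\times V^*)$ must have dimension at least $2n$, and let the commutator equation do the cutting — is indeed the paper's skeleton, but the proposal stops exactly where the real difficulty sits. Write $L_{ij}:=\operatorname{Ker}A^i\times \operatorname{Ker}(A^*)^{n-i}\times \operatorname{Ker}A^j\times \operatorname{Ker}(A^*)^{n-j}$; each $L_{ij}$ has dimension exactly $2n$, and $R_A\subset\bigcup L_{ij}$. On the off-diagonal strata, and for the reduction of any weakly coisotropic piece to the diagonal strata, routine arguments of the kind you describe do work. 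But on the diagonal strata $L_{ii}$ every obvious linear constraint extracted from $[A,A_2]=v_2\otimes\phi_1-v_1\otimes\phi_2$ (namely $\operatorname{tr}(MA^k)=0$ for $M=v_1\otimes\phi_2-v_2\otimes\phi_1$) is automatically satisfied, so no naive dimension count and no pointwise failure of condition (iv) is available there. The paper's proof supplies precisely the missing ingredient: on $L_{ii}$ the matrix $M$ is supported in the top-right $i\times(n-i)$ block, any $B$ with $[A,B]=M$ is then upper triangular, and since some such $B$ (the existential $A_2$) is nilpotent, it is strictly upper triangular, which forces $M_{i,i+1}=(v_1)_i(\phi_2)_{i+1}-(v_2)_i(\phi_1)_{i+1}=0$; this one nontrivial polynomial identity is what pushes $\dim(R_A\cap L_{ii})$ below $2n$. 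Nothing in your plan produces this (or an equivalent) identity, and the one mechanism you do name — that $[A,\cdot]$ has image in the strictly nilpotent part — is false: already for $n=2$, $[E_{12},E_{21}]=\operatorname{diag}(1,-1)$.

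Two further points. First, your fallback of exhibiting, on each component, a point where condition (iv) fails does not prove the lemma: failure of (iv) for a component of $R_A$ only shows that this component is not itself weakly coisotropic, not that it contains no weakly coisotropic subvariety; the arguments that exclude all subvarieties are either a dimension bound $\dim<2n$ or the paper's observation that translation-invariance under the conormal of the base forces any weakly coisotropic subvariety of $\bigcup L_{ij}$ into the diagonal union $\bigcup_i L_{ii}$. Second, a small but relevant misreading: $R_A$ is the restriction $i^*(\check{S}'-\check{S}'')$ to the slice, so it lives in $V\times V^*\times V\times V^*$ and does not carry the matrix $A_2$ — $A_2$ enters only existentially, which is exactly why the rigidity coming from its nilpotency must be converted into a condition on $(v_1,\phi_1,v_2,\phi_2)$ alone, as the polynomial $f$ above does. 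As it stands, the proposal is a reasonable plan with the correct framework but with the key step of the argument missing and its proposed substitute incorrect.
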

\begin{proof}
Denote $Q_A = \bigcup_{i=1}^{n-1} (Ker A^i) \times
(Ker(A^*)^{n-i}).$ It is easy to see that $R_A \subset Q_A \times
Q_A$ and
$$Q_A \times Q_A = \bigcup_{i,j=0}^n (Ker A^i) \times (Ker (A^*)^{n-i}) \times (Ker A^j) \times (Ker (A^*)^{n-j}).$$
Denote $L_{ij} := (Ker A^i) \times (Ker (A^*)^{n-i}) \times (Ker A^j) \times (Ker (A^*)^{n-j})$.

It is easy to see that any weakly coisotropic subvariety of $Q_A \times Q_A$ is contained in $\bigcup_{i=1}^{n-1} L_{ii}.$
Hence it is enough to show that for any $0<i<n$, we have $\dim R_A \cap L_{ii} < 2n$.

Let $f \in {\mathcal O}(L_{ii})$ be the polynomial defined by
$$f(v_1,\phi_1,v_2,\phi_2):= (v_1)_{i} (\phi_2)_{i+1}-(v_2)_{i} (\phi_1)_{i+1},$$
where $( \cdot)_i$ means the i-th coordinate. It is enough to show that $f(R_A \cap L_{ii}) = \{0\}$.

Let $(v_1,\phi_1,v_2,\phi_2) \in L_{ii}$. Let $M:= v_1 \otimes \phi_2-v_2 \otimes \phi_1$. Clearly, $M$ is of the form
$$ M= \begin{pmatrix}
  &0_{i \times i} &* \\
  & 0_{(n-i) \times i} &0_{(n-i) \times (n-i)}
\end{pmatrix}.  $$
Note also that $M_{i,i+1}=f(v_1,\phi_1,v_2,\phi_2)$.

It is easy to see that any $B$ satisfying $[A,B]=M$ is upper triangular. On the other hand, we know that there exists a
nilpotent $B$ satisfying $[A,B]=M$. Hence this $B$ is upper nilpotent, which implies $M_{i,i+1}=0$ and hence $f(v_1,\phi_1,v_2,\phi_2)=0$.

To sum up, we have shown that $f(R_A \cap L_{ii} = \{0\}$, hence $dim(R_A \cap L_{ii})<2n$. Hence every coisotropic subvariety of $R_A$ has dimension less than $2n$ and therefore is empty.
\end{proof}

\end{document}